\DeclarePairedDelimiter{\ceil}{\lceil}{\rceil}
\newtheorem{lemma}{Lemma}[section]
\newtheorem{corollary}{Corollary}[section]
\newtheorem{definition}{Definition}[section]
\newtheorem{proposition}{Proposition}[section]
\newtheorem{remark}{Remark}[section]
\newtheorem{theorem}{Theorem}[section]
\title{Random interval diffeomorphisms}
 \author{Masoumeh Gharaei}
 \address{KdV Institute for Mathematics, University of Amsterdam, Science park 107, 1098 XG Amsterdam, Netherlands}
  \email{m.gharaei@uva.nl}
 \author{Ale Jan Homburg}
 \address{KdV Institute for Mathematics, University of Amsterdam, Science park 107, 1098 XG Amsterdam, Netherlands}
 \address{Department of Mathematics, VU University Amsterdam, De Boelelaan 1081, 1081 HV Amsterdam, Netherlands}
 \email{a.j.homburg@uva.nl}
\begin{document}

\begin{abstract}
We consider a class of step skew product systems of interval diffeomorphisms over shift operators,
as a means to study random compositions of interval diffeomorphisms.
The class is chosen to present in a simplified setting intriguing phenomena of intermingled basins, master-slave synchronization and on-off intermittency. We provide a self-contained discussion of these phenomena. 
%
\end{abstract}

\maketitle

\section{Introduction} 

We deal with the dynamics of specific step skew product systems
$F^+:\Sigma_2^+ \times \mathbb{I} \to \Sigma_2^+ \times \mathbb{I}$, where $\Sigma_2^+ = \{ 1,2\}^\mathbb{N}$ and $\mathbb{I} = [0,1]$, of the form
\[
{F^+}(\omega,x) = (\sigma \omega , f_{\omega_0} (x) ).
\] 
Here $\sigma: \Sigma_2^+ \to \Sigma_2^+$ is the shift operator; $(\sigma \omega)_i = \omega_{i+1}$ for $\omega = (\omega_i)_0^\infty$ and $f_1$, $f_2$ are $C^2$ diffeomorphisms on $\mathbb{I}$ 
that fulfill the following conditions:
\begin{enumerate}
\item $f_i (0) = 0$, $f_i(1) = 1$ for $i=1,2$;
\item $f_1(x) < x$ for $x \in (0,1)$;
\item $f_2 (x) > x$ for $x \in (0,1)$.
\end{enumerate}
We review and present a self-contained study of the dynamics of such skew product systems, 
characterizing the different possible dynamics. 
This may seem a restrictive setup, but these systems exhibit a wealth of dynamical behavior that serves as models for dynamics in more general systems. 

These step skew product systems provide a setting to study all possible compositions of the two maps $f_1,f_2$ in a single framework.
Indeed, for initial conditions $(\omega,x) \in \Sigma_2^+ \times \mathbb{I}$, the coordinate in $\mathbb{I}$ iterates as
\begin{equation}\label{e:comp} 
x, \,  f_{\omega_0} (x), \, f_{\omega_1} \circ f_{\omega_0}(x), \, f_{\omega_2}\circ f_{\omega_1} \circ f_{\omega_0} (x), \, ...
\end{equation}
The maps $f_1$, $f_2$ simply move points to either smaller or larger values.
We will pick the diffeomorphisms $f_1$ and $f_2$ randomly, independently at each iterate, 
with positive probabilities $p_1$ and $p_2 = 1 - p_1$.
This corresponds to taking a Bernoulli measure on $\Sigma_2^+$ from which we pick $\omega$. 
The obtained random compositions \eqref{e:comp} thus form a (nonhomogeneous) random walk on the interval.

The dynamics of the step skew product system depends on the Lyapunov exponents at the boundaries
$\Sigma_2^+ \times \{0\}$ and $\Sigma_2^+ \times \{1\}$.
We list the possibilities, which will be worked out in subsequent sections below. 

\setlist[description]{font=\normalfont\itshape} 

\begin{description}

\item[Intermingled basins]  With negative Lyapunov exponents at the boundaries, these boundaries are attracting. Their basins are intermingled: any open set in $\Sigma_2^+ \times \mathbb{I}$ intersects both basins. 

\item[Master-slave synchronization] With positive Lyapunov exponents at the boundaries, the boundaries are repelling.
 We find that for almost all fibers $\{\omega\} \times \mathbb{I}$, orbits of points in the same fiber converge to each other, i.e. synchronize.

\item[On-off intermittency] A zero Lyapunov exponent at a boundary makes that boundary neutral. With the other boundary repelling, 
a typical time series has long laminary phases where the orbit is close to the "off" state
(the neutral boundary) and has bursts where the orbit is in the "on" state, i.e. away from the neutral boundary. 
Orbits spend a portion of its iterates with full density near the neutral boundary. 

With two neutral boundaries, orbits spend a portion of its iterates with full density near the union of the neutral boundaries.

\item[Random walk with drift] With one attracting and one repelling or neutral boundary, most orbits approach the attracting boundary. 
\end{description}
Thus we find  the most elementary case of the more widespread phenomenon of  
intermingled basins \cite{aleyoryoukan92,kan94}, or on-off intermittency \citep{plsptr93,heaplaham94}, or
master-slave synchronization \citep{peccar90,sta97}.

%

The setup chosen in this paper is a starting point for research in random dynamics, see e.g. \cite{arn98}, and nonhyperbolic dynamics, see e.g. \cite{bondiavia05}, and has relations to nonautonomous dynamics, see e.g. \cite{kloras11}.
The following directions for generalizations give an idea of the many possibilities.
We will not give details, but refer to  \citep{arn98,bondiavia05,kloras11} for more.     
One may consider other measures than Bernoulli measures
to pick random compositions of the interval maps.
A natural generalization is also to let the diffeomorphisms on $\mathbb{I}$ depend on $\omega$ more generally than through
$\omega_0$ alone;
\[
(\omega,x) \mapsto (\sigma \omega , f_{\omega} (x) ).
\] 
One can further consider parameters $\omega$ from other spaces than symbol spaces, with other dynamics than generated by the shift 
operator. One may then also generalize the skew product structure to maps on fiber bundles, and study 
perturbations that destroy the skew product structure.
A heuristic principle going back to \cite{gorily99} states that phenomena in random dynamics on compact manifolds
may also occur for diffeomorphisms of manifolds of higher dimensions.

This paper is organized as follows.
We start with a section that contains definitions.
The next sections form the heart of the paper, describing possible dynamics for 
the considered class of step skew product systems.
An important role in the study of skew product systems is by invariant measures. A basic result gives the connection 
between invariant measures for skew product systems and their natural extensions.
In the appendix this is worked out in the simple context of step skew product systems over one-sided and two-sided shifts.\\

\noindent {\bf Acknowledgment.} 
We are indebted to Todd Young for discussions on the topics of this paper, and to Abbas Ali Rashid and Vahatra Rabodonandrianandraina 
for a careful reading and remarks.
Frank den Hollander pointed out relevant work by Lamperti on random walks.
Detailed comments from referees have been very helpful in improving the presentation.

\section{Step skew product systems}\label{s:sps}

This section serves to present the setup of this paper and to collect necessary definitions.
A skew product system is a dynamical system generated by a map $F: Y \times X \to Y \times X$ of the form
\begin{align}\label{e:sps}
 F (y,x) &= ( g(y) ,  f(y,x));
\end{align}
if one sees $X$ as the state space of interest, one has dynamics of the $x$ variable that is governed by the map $f$ 
which depends on the variable $y$ that changes through $g$.
The space $Y$ is the base space, the sets $\{y\} \times X$ are fibers.

We have an interest in skew product systems over full shifts.
Write $\Omega$ for the finite set of symbols $\{ 1,\ldots,N\}$.
Let $\Sigma_N = \Omega^\mathbb{Z}$ be the set of bilateral sequences $\omega=(\omega_n)^{\infty}_{-\infty}$ composed of symbols in $\Omega$.
Let $\sigma:\Sigma_N \to \Sigma_N$ be the shift operator;
the map $\sigma$ shifts every sequence $\omega \in \Sigma_N$ one step to 
the left, $(\sigma \omega)_i = \omega_{i+1}$. 
We can also consider the shift operator $\sigma$ acting on the one-sided symbol space $\Sigma_N^+$, i.e.
the 
space of sequences $\omega=(\omega_n)^{\infty}_0$ composed of symbols in $\Omega$.
The 
spaces $\Sigma_N$ and $\Sigma_N^+$ are endowed with the product topology.
This topology is generated by cylinders like
 $C^{k_1,\ldots,k_n}_{\omega_1,\ldots,\omega_n}$ for $\Sigma_N$,
\begin{align*}
 C^{k_1,\ldots,k_n}_{\omega_1,\ldots,\omega_n} = \{\omega' \in \Sigma_N \; ; \; \omega'_{k_i}= \omega_{k_i},\; \forall i=1,\ldots,n \}.
\end{align*}
As it will not lead to confusion, 
we use the same notation for cylinders in $\Sigma_N^+$.

Now let $M$ be a compact manifold, or compact manifold with boundary, and for $\omega \in \Sigma_N$, let $f_\omega: M \to M$
be diffeomorphisms depending continuously on $\omega$.
Consider skew product systems
$F: \Sigma_N \times M \to \Sigma_N \times M$; 
\[
F (\omega , x) = (\sigma \omega, f_\omega (x)).
\]

\begin{definition}
A skew product system $F:\Sigma_N \times M \to \Sigma_N \times M$ is  a step skew product system if it is of the form
\[
F (\omega , x) = (\sigma \omega, f_{\omega_0} (x)),
\]
i.e. the fiber maps depend on $\omega_0$ alone.
\end{definition}

We denote iterates of a skew product system
$F(\omega,x) = (\sigma \omega,  f_\omega (x))$ as
\[
F^n (\omega,x) = (\sigma^n \omega , f^n_\omega (x)).
\] 
Here, for $n \ge 1$, 
\[
f^n_\omega(x)= f_{\sigma^{n-1} \omega} \circ \cdots \circ f_{\omega} (x).
\]
For a step skew product system this becomes
\[
f^n_\omega(x)= f_{\omega_{n-1}} \circ \cdots \circ f_{\omega_0} (x).
\]
Observe that, if $-n <0$,
\[
f^{-n}_\omega (x) = \left(  f^n_{\sigma^{-n} \omega}  \right)^{-1}.
\] 

We also consider (step) skew products over the shift on one-sided symbol sequences.
We write 
$F^+$ for the skew product system 
\[
 F^+ (\omega,x) = (\sigma \omega , f_\omega (x))
\]
on $\Sigma_N^+ \times M$.
Recall that a natural extension of a continuous map is the smallest invertible extension, up to topological semi-conjugacy.
The skew product system $F$ on $\Sigma_N \times M$ is the natural extension of $F^+$ on $\Sigma_N^+ \times M$.  

\begin{definition}
Let $\mathbb{F}$ be a family of diffeomorphisms on $M$.
The iterated function system  $\text{IFS}\,(\mathbb{F})$ is the action of the semigroup generated by $\mathbb{F}$. 
\end{definition}

So a collection of  diffeomorphisms $f_i$, $1 \le i \le N$, generates an iterated function system.
And an iterated function system $\text{IFS}\,\{ f_1,\ldots,f_N\}$ on $M$ corresponds to a step skew product system
$F^+(\omega, x) = (\sigma \omega, f_{\omega_0} (x))$ on $\Sigma^+_N \times M$. 
Given an iterated function system $\text{IFS}\,(\mathbb{F})$,
a sequence $\{x_n : n\in \mathbb{N} \}$ is called a branch of an orbit of  $\text{IFS}\,(\mathbb{F})$ if 
for each $n\in \mathbb{N}$ there is $f_n\in \mathbb{F}$ such that $x_{n+1}=f_n(x_n)$. 
We say that $\text{IFS}\,(\mathbb{F})$ is minimal if every orbit has a branch which is dense in $M$. 

The appendix collects definitions and basic results on stationary and invariant measures in the context of
step skew product systems over shifts. We will make use of the material from the appendix in the following sections. 

\subsection{Interval fibers}

Focus of this paper is the following class of 
step skew products of diffeomorphisms on $\mathbb{I} = [0,1]$ over the full shift on two symbols $\{1,2\}$, earlier presented in the introduction.

\begin{definition}
Let $\mathcal{S}$ be the set of step skew product systems
${F^+}: \Sigma_2^+ \times \mathbb{I} \to \Sigma_2^+ \times \mathbb{I}$ with
\[
{F^+} (\omega,x) = (\sigma \omega , f_{\omega_0} (x)),
\]
where  $f_1, f_2$ are $C^2$ diffeomorphisms that fulfill the following conditions:
\begin{enumerate}
\item $f_i (0) = 0$, $f_i(1) = 1$ for $i=1,2$;
\item $f_1(x) < x$ for $x \in (0,1)$;
\item $f_2 (x) > x$ for $x \in (0,1)$.
\end{enumerate}
\end{definition}

So $f_1$ moves points in $(0,1)$ to the left, whereas $f_2$ moves points in $(0,1)$ to the right.
On $\Sigma_2^+$ we take Bernoulli measure $\nu^+$ where  the symbols $1,2$ have probability $p_1, p_2$, see Appendix~\ref{s:invmeasures}.

The (fiber) Lyapunov exponent of ${F^+}$ at a point $(\omega, x) \in \Sigma_2^+ \times \mathbb{I}$ is
\begin{equation*} 
\lim_{n\to \infty} \frac{1}{n} \ln \left( f_{\omega_{n-1}}'\left(f^{n-1}_\omega(x)\right)\cdots f_{\omega_0}'(x) \right) = 
\lim_{n\to \infty} \frac{1}{n} \sum_{i=0}^{n-1} \ln \left( f_{\sigma ^i\omega}' (f^i_\omega (x)) \right),
\end{equation*}
in case the limit exists.
Since  $x=0,1$ are fixed points of $f_i$, $i=1,2$, 
by Birkhoff's ergodic theorem, we obtain for $x=0,1$ that  
\[ 
L(x) = \lim_{n\to \infty} \frac{1}{n} \sum_{i=0}^{n-1} \ln \left( f_{\sigma ^i\omega}' (x) \right)= \int_{\Sigma^+_2} \ln \left( f_\omega'(x) \right) \, d\nu^+ (\omega)= \sum_{i=1}^2 p_i \ln \left(f_i'(x)\right)
\]
for $\nu^+$-almost all $\omega \in \Sigma_2^+$.

\begin{definition}
The standard measure $s$ on $\Sigma_2^+ \times \mathbb{I}$ is the product of Bernoulli measure $\nu^+$ and Lebesgue measure on $\mathbb{I}$.
\end{definition}

\begin{figure}[phtb]

\begin{center}
 \includegraphics[height=5.2cm]{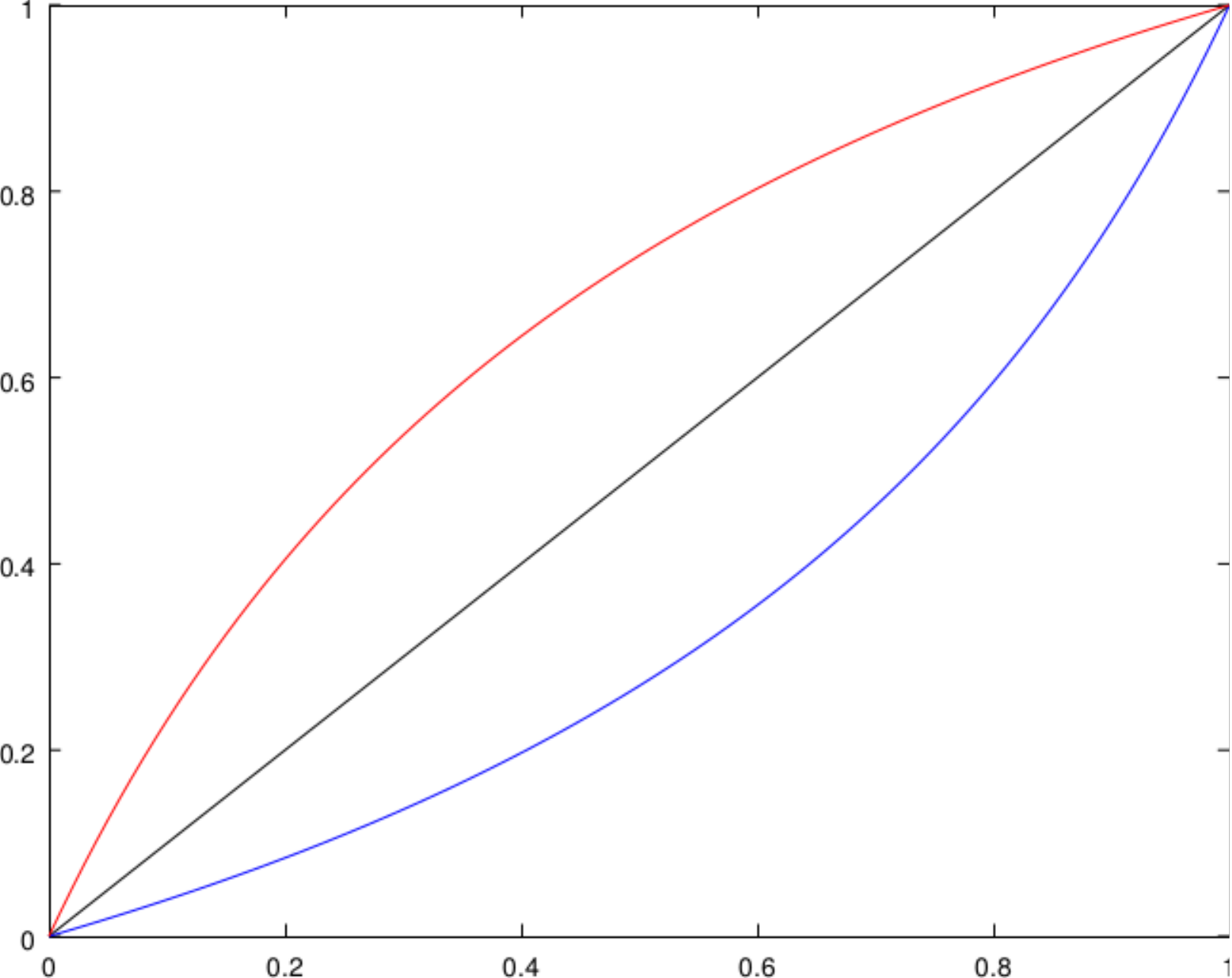}\hspace{0.4cm} 
 \includegraphics[height=5.2cm]{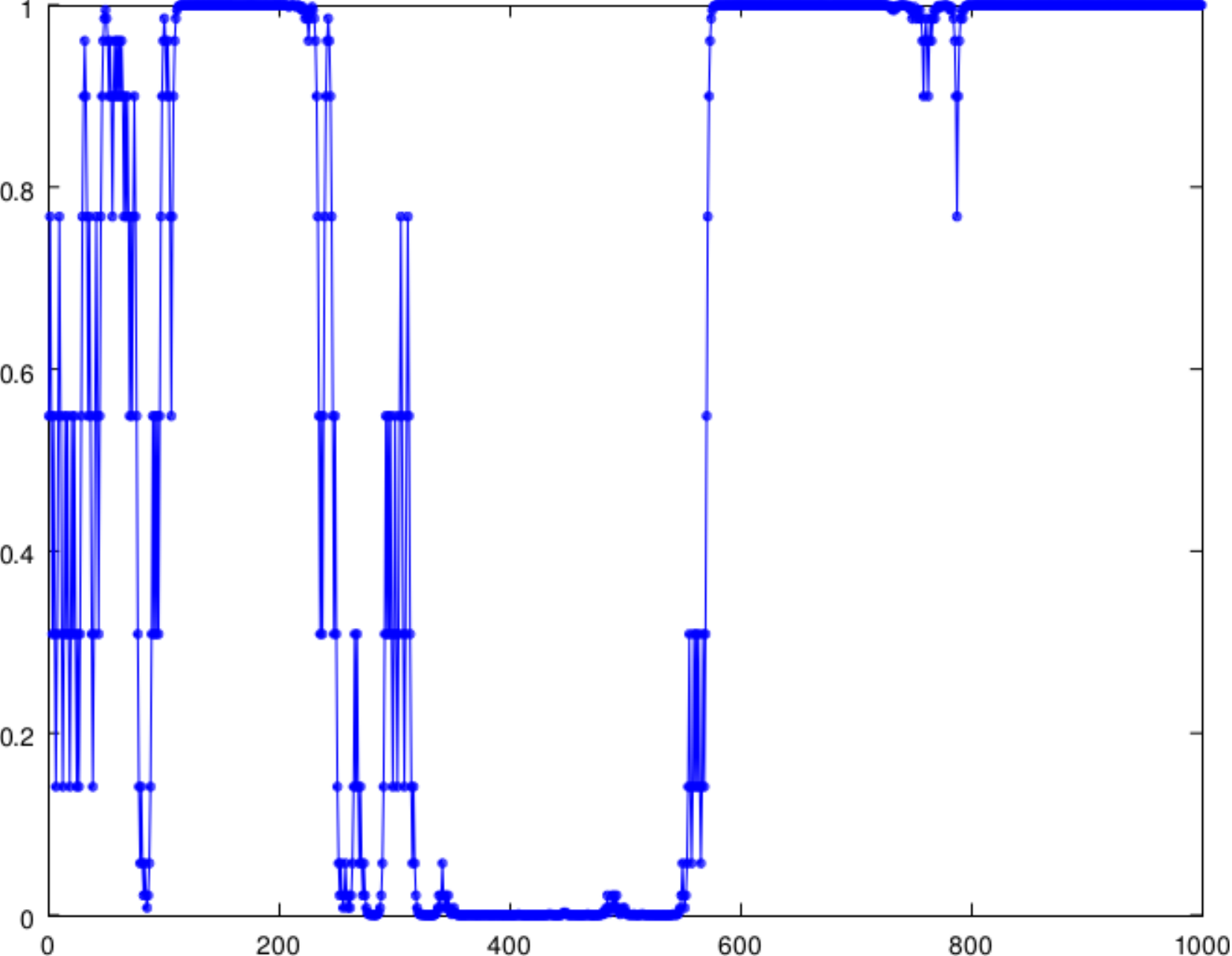} 
\caption{The left frame depicts the graphs of $g_1,g_2$, the diffeomorphisms on $\mathbb{I}$ that are conjugate to the maps $y \mapsto y \pm 1$ that generate the symmetric random walk. The right frame shows a time series of the iterated function system generated by $g_1,g_2$, both picked with
probability $1/2$.
\label{f:walk}}
\end{center}

\end{figure}
A specific example of a step skew product system from $\mathcal{S}$ comes  from the symmetric random walk.
The symmetric random walk is given by translations 
\begin{align*}
k_1(x) &= x-1,\\
k_2(x) &= x+1
\end{align*}
on the real line, where both maps are chosen randomly with probability $1/2$.
Now conjugate the symmetric random walk to maps on $\mathbb{I}$ as follows.  
Consider the coordinate change given by the diffeomorphism
$h: \mathbb{R} \to (0,1)$, 
\[
h(x) = \frac{e^x}{1 + e^x}
\]
(note that $h^{-1} (x) = \ln (x/(1-x)))$.
Define the step skew product system on $\Sigma_2 \times \mathbb{I}$ generated by the fiber diffeomorphisms $g_i =h \circ k_i \circ h^{-1}$ with 
$g_i(0)=0$, $g_i(1)=1$, $i=1,2$.
We have
\begin{align}
\label{e:g1}
g_1 (x) &=  \frac{\frac{1}{e} x}{1 + (\frac{1}{e} -1)x},
\\
\label{e:g2}
g_2 (x) &=  \frac{e x}{1 + (e-1)x},
\end{align}
see Figure~\ref{f:walk}.
We will also refer to the step skew product system generated by $g_1,g_2$ as the symmetric random walk.

Observe $g_1'(0) = \frac{1}{e}$, $g_2'(0) = e$, $g_1'(1) = e$, $g_2'(1) = \frac{1}{e}$,
so that the symmetric random walk has zero Lyapunov exponents at the boundaries $\Sigma_2^+ \times \{0\}$ and
$\Sigma_2^+ \times \{1\}$.
Perturbations of $g_1, g_2$ that preserve the boundary points $0,1$ lead to diffeomorphisms $f_1,f_2$  with various signs of Lyapunov exponents at the boundaries: all cases that are treated in the following sections also occur as small perturbations from the symmetric random walk.

The text book \cite{chu68} contains a discussion of recurrence properties of random walks on the line with i.i.d. steps.
In the same vein one can ask for the iterated function system $\text{IFS}\,(\{f_1,f_2\})$ to be minimal on $(0,1)$.   
The proof of \cite[Lemma~3]{ily10} gives the following result.

\begin{proposition}\label{p:ily10}
Assume that $\lambda = f_1'(0) < 1$, $\mu = f_2'(0) > 1$. 
Assume further that either
\[
\ln (\lambda) / \ln (\mu) \not \in \mathbb{Q},
\]
or
\[
\frac{f_1'' (0)}{\lambda^2 - \lambda} \neq \frac{f_2''(0)}{\mu^2 - \mu}.
\]
Then the iterated function system generated by $f_1,f_2$ is minimal on $(0,1)$.  
Such minimality is also implied by analogous conditions at the end point $1$.
\end{proposition}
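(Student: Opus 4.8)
The plan is to reduce the global minimality statement to a local density statement at the fixed point $0$ (the endpoint $1$ being symmetric), and then to establish that local statement by linearizing $f_1$ and using the hypotheses exactly as in \cite[Lemma~3]{ily10}. For $x\in(0,1)$ write $O(x)$ for its forward orbit under the semigroup generated by $f_1,f_2$; note $O(x)\subseteq(0,1)$. First I would observe that minimality is equivalent to $\overline{O(x)}=(0,1)$ for every $x$: from the latter one builds a dense branch out of any $x$ by a diagonal construction, steering the trajectory successively into each member of a countable basis of $(0,1)$. Next, $\overline{O(x)}$ is closed and forward invariant, and since $f_1^{\,n}(x)\searrow 0$ it meets every neighbourhood of $0$; choosing $y=f_1^{\,n}(x)$ close to $0$ gives $\overline{O(y)}\subseteq\overline{O(x)}$. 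Hence it suffices to produce $\delta,\delta'>0$ with $\overline{O(y)}\supseteq(0,\delta']$ for every $y\in(0,\delta)$: by forward invariance $f_2^{\,k}\big((0,\delta']\big)=(0,f_2^{\,k}(\delta')]\subseteq\overline{O(y)}$ for all $k$, and $f_2^{\,k}(\delta')\nearrow 1$, so $\overline{O(y)}\supseteq(0,1)$ and therefore $\overline{O(x)}=(0,1)$ for every $x$.

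To analyse the dynamics near $0$ I would linearize $f_1$: since $0$ is a hyperbolic fixed point of the $C^2$ diffeomorphism $f_1$ with multiplier $\lambda\in(0,1)$, there is a $C^1$ coordinate $u=\varphi(x)$ near $0$ in which $f_1$ becomes exactly the linear map $\tilde f_1(u)=\lambda u$. In this coordinate $f_2$ becomes $\tilde f_2(u)=\mu u+\beta u^2+o(u^2)$, where $\beta=0$ precisely when $f_1$ and $f_2$ admit a common linearizing coordinate to second order, i.e.\ precisely when $f_1''(0)/(\lambda^2-\lambda)=f_2''(0)/(\mu^2-\mu)$. Thus the hypotheses place us in one of two cases: (A) $\ln\lambda/\ln\mu\notin\mathbb Q$, with no constraint on $\beta$, or (B) $\beta\neq 0$. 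Two features will drive the argument: every element of the semigroup fixes $0$ with multiplier in $\{\lambda^m\mu^n:m,n\in\mathbb N_0\}$, and $\tilde f_1$ contributes no nonlinearity, so the nonlinearity of any composition is concentrated in its $\tilde f_2$-blocks.

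The heart of the matter is then to show $\overline{O(y)}\supseteq(0,\delta']$ for all small $y$. In case (A) the set $\{\lambda^m\mu^n:m,n\in\mathbb N_0\}$ is dense in $(0,\infty)$ — since $\ln\lambda<0<\ln\mu$ with irrational ratio, which I would check via equidistribution of $\{-m\ln\lambda/\ln\mu \bmod 1\}$ — so the semigroup contains compositions $\tilde f_2^{\,n}\circ\tilde f_1^{\,m}$ with multipliers densely filling $(0,\infty)$. The idea is to blow $y$ up by iterating $\tilde f_2$ until it reaches the scale of a prescribed target $c\in(0,\delta']$, then apply a near-linear contraction of suitable multiplier to land near $c$; the key estimate is that, as long as the multiplier keeps the relevant forward orbit inside $[0,\delta]$, the total displacement along that orbit is a geometric sum of size $O(\delta)$, yielding a distortion bound for the composition that is uniform in its (unbounded) length. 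In case (B) the resonance $\lambda^q\mu^p=1$ makes $P:=\tilde f_2^{\,p}\circ\tilde f_1^{\,q}$ a $C^2$ parabolic germ $P(u)=u+\gamma u^2+\cdots$ with $\gamma\neq 0$, whose iterates $P^N(y)$ traverse a one-sided neighbourhood of $0$ at rate $\asymp 1/N$; interleaving these with the exact rescalings $\tilde f_1^{\,m}$ again fills a one-sided neighbourhood of $0$. Either way one obtains $\overline{O(y)}\supseteq(0,\delta']$, and the first paragraph finishes the proof; the statement under conditions at the endpoint $1$ follows by the same argument after the substitution $x\mapsto 1-x$, which interchanges the roles of $f_1$ and $f_2$.

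I expect the main obstacle to be exactly this last step: converting the ``dense set of available multipliers'' (or the parabolic germ) into genuine density of a single orbit, despite the fact that $0$ is a common fixed point of both maps — so it cannot serve as a transition hub — and despite the fact that small nonlinear perturbations of a minimal linear model need not be minimal. This forces one to keep uniform control of distortion along compositions of unbounded length, for which it is essential that $f_1$ is exactly linear in its own linearizing chart (all nonlinearity sitting in $f_2$-blocks whose orbits remain in $[0,\delta]$ with geometrically summable total size), and then to bootstrap from ``each point of $(0,\delta']$ is approached to within a controlled relative error'' to actual density. This delicate estimate is precisely what is carried out in \cite[Lemma~3]{ily10}, and I would follow that argument rather than reproduce it.
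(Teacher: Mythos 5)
Your proposal is correct and follows essentially the same route as the paper: both rely on Il'yashenko's Lemma~3, centre the argument on linearizing coordinates that make $f_1$ exactly linear near $0$, and distinguish the rationally independent case from the resonant case (where the condition on second derivatives is exactly the nonvanishing of the quadratic coefficient $\beta$ of $h\circ f_2\circ h^{-1}$, as you verify). You supply more of the surrounding scaffolding — the reduction of minimality to $\overline{O(y)}\supseteq(0,\delta']$ for small $y$ and the identification of the parabolic germ in the resonant case — but, like the paper, you defer the delicate distortion estimate to the cited source, so the two proofs are in substance the same.
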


\begin{proof}
For the proof we refer to \cite{ily10}. We add some comments to clarify the conditions.
Il'yashenko \cite[Lemma~3]{ily10} considers, for $x,y \in (0,1)$, compositions $f_2^l \circ f_1^k (x)$ that converge to $y$ for suitable $k,l \to \infty$. Note that this property implies minimality. 
His analysis uses linearizing coordinates $h \circ f_1 \circ h^{-1} (x) = \lambda x$ with $x \in [0,s]$ for an $s <1$. Here $h$ is a local diffeomorphism. 
The two cases where $\ln (\lambda), \ln(\mu)$ are rationally dependent or not, are distinguished.
In case $\ln (\lambda), \ln(\mu)$ are rationally dependent, 
the argument works if the second order derivative of $h \circ f_2 \circ h^{-1}$ at  $0$ is not zero.
An explicit calculation shows that this gives the condition in the proposition.
\end{proof}

Obviously, the iterated function system 
generated by $g_1$ and $g_2$, where $g_2 = g_1^{-1}$, is not minimal. 

\section{Intermingled basins}\label{s:intermingled}

Kan \cite{kan94} describes an example of a skew product system on $\mathbb{T} \times \mathbb{I}$, over an expanding circle map in the base, 
where the boundary components 
$\mathbb{T} \times \{0\}$ and $\mathbb{T}\times \{1\}$ are attractors so that both basins intersect each open set.
We will describe his results in the elementary setting of step skew product systems.


The following result describes intermingled basins for step skew product systems ${F^+} \in \mathcal{S}$.

\begin{theorem}\label{t:kan}
Let ${F^+} \in \mathcal{S}$ and assume $L(0)<0$ and $L(1)<0$.
The sets $\Sigma_2^+ \times \{0\}$ and $\Sigma_2^+ \times \{1\}$  attract sets of positive standard measure.
 Both their  basins lie dense in $\Sigma_2^+ \times \mathbb{I}$. The union of the basins has full standard measure. 
 
 Let $F: \Sigma_2 \times \mathbb{I} \to \Sigma_2 \times \mathbb{I}$ denote the natural extension of ${F^+}$.
 There is an invariant measurable graph $\xi: \Sigma_2 \to \mathbb{I}$ that separates the basins: 
  for $\nu$-almost all $\omega$,
\[ \lim_{n\to\infty} f^n_\omega (x) = \left\{ \begin{array}{ll} 0, & \text{ if } x < \xi(\omega),
                                                      \\ 
                                                                1, & \text{ if } x > \xi(\omega).
                                              \end{array} \right.
\]
\end{theorem}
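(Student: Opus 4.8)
The plan is to exploit the monotonicity of the fiber maps $f_1, f_2$: each $f_{\omega_0}$ is an increasing diffeomorphism of $\mathbb{I}$ fixing $0$ and $1$. For the natural extension $F$, the inverse fiber maps $f^{-n}_\omega$ are also increasing, so the images $f^{-n}_{\sigma^n\omega}(1/2)$ (say) form, for fixed $\omega$, a sequence in $(0,1)$; more usefully, I would look at the nested structure of the sets that get attracted to $0$. Concretely, fix $\omega \in \Sigma_2$ and consider the function $\xi(\omega) = \sup\{ x \in \mathbb{I} : f^n_{\sigma^{-n}\omega}(x) \to 0 \text{ as } n \to \infty \}$, or equivalently work with the one-sided picture and push back along the base. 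Because each $f^n_\omega$ is a homeomorphism of $[0,1]$ fixing the endpoints, the set of points in the fiber over $\omega$ whose forward orbit converges to $0$ is an interval $[0,\xi(\omega))$ or $[0,\xi(\omega)]$, and similarly the points converging to $1$ form an interval $(\xi(\omega),1]$; the content is that these two intervals together have full Lebesgue measure in the fiber, i.e. the "neither" set is a single point or null.

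The key analytic input is the hypothesis $L(0) < 0$, $L(1) < 0$. First I would show that $\Sigma_2^+ \times \{0\}$ attracts a set of positive standard measure: since $L(0) = \sum p_i \ln f_i'(0) < 0$, by continuity of $f_i'$ near $0$ and a large-deviation / Birkhoff argument, for a positive-measure set of $\omega$ and all sufficiently small $x$ the orbit $f^n_\omega(x)$ decays geometrically to $0$; a standard Fubini argument over $\Sigma_2^+$ then gives positive standard measure for the basin of $\Sigma_2^+\times\{0\}$, and symmetrically for $\Sigma_2^+ \times \{1\}$ using $L(1) < 0$. Density of each basin follows from minimality-type reasoning or, more elementarily, from the fact that any point can be mapped near $0$ by applying $f_1$ many times and near $1$ by applying $f_2$ many times, combined with continuity and openness of the (pre)images of the geometric-contraction neighborhoods — so every cylinder $\times$ interval meets both basins.

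For the invariant separating graph I would pass to the natural extension $F$ on $\Sigma_2 \times \mathbb{I}$ and define $\xi(\omega)$ as the infimum of the basin of $1$ in the fiber over $\omega$ (equivalently the supremum of the basin of $0$). Invariance $f_{\omega_0}(\xi(\omega)) = \xi(\sigma\omega)$ is immediate from the monotone conjugacy of fiber maps. Measurability of $\xi$ follows because it is a monotone-type limit of measurable functions (e.g. $\xi(\omega) = \lim_k f^k_{\sigma^{-k}\omega}(c_k)$ for suitable measurable choices, or as an essential supremum). The crucial remaining point — and the one I expect to be the main obstacle — is to rule out that the "bad set" $B(\omega) = \{x : f^n_\omega(x) \not\to 0,1\}$ has positive Lebesgue measure for a positive-measure set of $\omega$, i.e. to show the union of the two basins has full standard measure. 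Here I would argue by contradiction: if $\operatorname{Leb}(B(\omega)) > 0$ on a positive-measure set, then by invariance and ergodicity of $(\sigma,\nu)$ the quantity $\int \operatorname{Leb}(B(\omega))\,d\nu(\omega)$ is positive and $F$-invariant; one then derives a contradiction with $L(0) < 0$ and $L(1) < 0$ by showing that a positive fraction of such orbit segments must be pushed toward a boundary at an exponential rate (a recurrence-to-good-blocks argument, using that near each boundary the dynamics is eventually contracting on average), forcing $\operatorname{Leb}(B(\omega)) = 0$ a.e. Getting this last step clean — balancing the two boundaries and handling orbits that oscillate between neighborhoods of $0$ and $1$ — is the heart of the matter; the rest is bookkeeping with monotone maps and Fubini.
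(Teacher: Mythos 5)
Your outline of the easy parts is aligned with the paper: positive standard measure of the basins follows from a Birkhoff estimate near each fixed endpoint (Lemma~3.2 in the paper, essentially your ``geometric-contraction neighborhood'' argument), and density of the basins follows from the expansivity of the shift together with $0,1$ being attracting fixed points of $f_1, f_2$ respectively. Defining $\xi(\omega)$ as the fiberwise supremum of the basin of $0$ and noting equivariance $\xi(\sigma\omega)=f_{\omega_0}(\xi(\omega))$ by monotonicity is also sound, as is the observation that the ``bad set'' $B(\omega)$ is an interval.

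The crucial step, however --- showing $B(\omega)$ degenerates to a single point $\nu$-a.e.\ --- is where the proposal breaks down, and the reasoning you sketch there is not just underdeveloped but in conflict with the setup. First, the quantity $\operatorname{Leb}(B(\omega))$ is \emph{not} invariant under the skew product: $B(\sigma\omega)=f_{\omega_0}(B(\omega))$, so $\operatorname{Leb}(B(\sigma\omega))=\int_{B(\omega)}f'_{\omega_0}\,dx$, which is in general unequal to $\operatorname{Leb}(B(\omega))$ because the fiber maps do not preserve Lebesgue measure; the standard measure $s=\nu^+\times\operatorname{Leb}$ is likewise not $F^+$-invariant. Second, and more fundamentally, your proposed contradiction relies on the negative Lyapunov exponents at the boundaries pushing orbits in $B(\omega)$ toward an endpoint ``at an exponential rate,'' but the orbit of a nondegenerate interval $B(\omega)$ must stay uniformly away from $\{0,1\}$ (otherwise the contraction at the boundary would already collapse it), so the boundary Lyapunov exponents simply do not act on it. You are left with an invariant family of intervals living in the middle of $(0,1)$, where the hypotheses on $f_1,f_2$ give no contraction; there is no ``recurrence-to-good-blocks'' argument available. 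The paper avoids this entirely by a change of perspective: it passes to the inverse skew product, which has \emph{positive} exponents at both endpoints. There the estimate $\sum_i p_i(\rho_i-\delta)^{-\alpha}<1$ gives a closed, $\mathcal T$-invariant set $\mathcal N_c$ of measures that cannot charge the endpoints, Krylov--Bogolyubov produces an ergodic stationary measure $m$ in $(0,1)$, a median argument (Lemma~3.4) shows the disintegration of the corresponding $F$-invariant measure is $\delta_{\xi(\omega)}$, and a convex-hull argument (Lemma~3.5) gives uniqueness; pullback convergence to $\xi$ for all $x\in(0,1)$ then follows, and inverting recovers the forward dichotomy for the original maps. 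That inversion trick is the idea your proposal is missing, and without it (or something replacing it) the proof of the separating graph does not go through.
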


We note that $\Sigma_2^+ \times \{0\}$ and $\Sigma_2^+ \times \{1\}$ are attractors in Milnor's sense \cite{mil85}.
The values $\xi (\omega)$ depend only on the present and future coefficients $(\omega_i)_0^\infty$.
Before starting the actual proof, we provide a simple argument showing positive standard measure of the basins of
$\Sigma_2^+ \times \{0\}$ and $\Sigma_2^+ \times \{1\}$.  

\begin{lemma}\label{l:bonmil}
Let ${F^+} \in \mathcal{S}$ and assume $L(0)<0$.
Let 
\begin{align*}
r(\omega) = \sup \{  x\in \mathbb{I} \;  \mid  \; \lim_{n\to \infty} f^n_\omega (x) = 0 \}.
\end{align*}
Then $r (\omega)>0$ for $\nu^+$-almost all $\omega \in \Sigma_2^+$.  
\end{lemma}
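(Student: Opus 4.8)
The plan is to exhibit, for $\nu^+$-almost every $\omega$, an explicit one-sided neighbourhood $(0,\rho(\omega)]$ of the fixed point $0$ that is forward-contracted to $0$ along the fibre over $\omega$. This is the random analogue of the local stable manifold of a hyperbolic fixed point, and once it is in hand we get $r(\omega)\ge\rho(\omega)>0$ immediately. To quantify the negative Lyapunov exponent, put $S_n(\omega)=\sum_{i=0}^{n-1}\ln f_{\omega_i}'(0)$; the computation of $L(0)$ already recorded above shows $S_n(\omega)/n\to L(0)<0$ for $\nu^+$-almost all $\omega$. Fix $\epsilon>0$ with $L(0)+2\epsilon<0$. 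On this full-measure set $S_n(\omega)-n(L(0)+\epsilon)\to-\infty$, so the quantity $B(\omega)=\sup_{n\ge 0}\bigl(S_n(\omega)-n(L(0)+\epsilon)\bigr)$ is finite (and $\ge 0$), and hence $S_n(\omega)\le n(L(0)+\epsilon)+B(\omega)$ for every $n\ge 0$.

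Next I would use smoothness at $0$: since $f_i$ is $C^1$ with $f_i(0)=0$ and $f_i'(0)>0$, we have $f_i(x)/x\to f_i'(0)$ as $x\to 0^+$, so there is $\delta>0$ with $\ln\,(f_i(x)/x)\le\ln f_i'(0)+\epsilon$ for all $x\in(0,\delta]$ and $i=1,2$. Now set $\rho(\omega)=\delta e^{-B(\omega)}$, take $x_0\in(0,\rho(\omega)]$, and write $x_{n+1}=f_{\omega_n}(x_n)$. An induction on $n$ shows $x_n\le\delta$ for all $n$: granting $x_0,\dots,x_{n-1}\le\delta$, telescoping the logarithmic increments gives $\ln x_n\le\ln x_0+S_n(\omega)+n\epsilon\le\ln\delta+n(L(0)+2\epsilon)$, using $\ln x_0\le\ln\delta-B(\omega)$; since $L(0)+2\epsilon<0$ the right-hand side is $\le\ln\delta$, which closes the induction and moreover forces $x_n\to 0$. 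Thus $\lim_{n\to\infty}f^n_\omega(x_0)=0$ for every $x_0\in(0,\rho(\omega)]$, so $r(\omega)\ge\rho(\omega)>0$; since $x=0$ is trivially in the set and the exceptional $\omega$ form a $\nu^+$-null set, this proves the lemma.

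The one place where care is genuinely needed is the interplay, inside the induction, between staying within the linearising window $(0,\delta]$ and the fact that individual fibre steps may be expanding: $f_2'(0)>1$ is not excluded, so no single fibre orbit contracts step by step, and the Cesàro convergence $S_n(\omega)/n\to L(0)$ alone is not enough. What rescues the argument is the \emph{uniform}-in-$n$ bound $S_n(\omega)\le n(L(0)+\epsilon)+B(\omega)$, which absorbs the finitely-many-excursions behaviour into the single constant $B(\omega)$ and thereby shrinks the admissible starting window to $\delta e^{-B(\omega)}$ while still leaving it positive. I expect this to be the main (and essentially only) obstacle; the remaining estimates are routine consequences of $f_1,f_2$ being $C^1$ diffeomorphisms fixing $0$.
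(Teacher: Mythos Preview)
Your argument is correct and follows essentially the same route as the paper's proof: both exploit continuity of $f_i$ at $0$ to get a per-step multiplicative bound on $(0,\delta]$, invoke the ergodic theorem to see that the corresponding random walk drifts to $-\infty$, record its finite almost-sure maximum ($B(\omega)$ for you, $A(\omega)$ in the paper), and then shrink the admissible starting window to $\delta e^{-B(\omega)}$ so that the orbit never escapes $(0,\delta]$ and is forced to $0$. The only cosmetic difference is that the paper absorbs the linearisation slack into modified step sizes $a_i=\ln(f_i'(0)+\varepsilon)$ and works with a single $\varepsilon$, whereas you keep the clean increments $\ln f_{\omega_i}'(0)$ and split the slack into two $\epsilon$'s; the content is identical.
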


\begin{proof}
The argument follows \cite[Lemma~2.2]{kan94} or \cite[Lemma~A.1]{bonmil08}.
For any $\varepsilon > 0$ there exists $\delta>0$ so that 
\[
f_i'(x) \le f_i'(0) + \varepsilon 
\]
if $x < \delta$, for both $i=1,2$.
Write $a_i = \ln (f_i'(0) + \varepsilon)$. 
Recall that $L(0) = p_1 \ln (f_1'(0)) + p_2 \ln (f_2'(0))$ is negative by assumption.
By Birkhoff's ergodic theorem applied to the function $\omega \mapsto \ln (f_\omega' (0) + \varepsilon)$,
for $\nu^+$-almost all $\omega$,
\[
\lim_{n\to\infty} \frac{1}{n} \sum_{i=0}^{n-1} a_{\omega_i} = p_1 a_1 + p_2 a_2,
\] 
which is negative if $\varepsilon$ is small enough.
So, for $\nu^+$-almost all $\omega$, $\sum_{i=0}^{n-1} a_{\omega_i}$ goes to $-\infty$ as $n\to \infty$ and
\[A( \omega) = \max \{ 0, \max_{n\ge 1}  \sum_{i=0}^{n-1} a_{\omega_i} \} \]  exists.
Take $x_0 < \delta e^{-A(\omega)} \le \delta$.
 Then $x_n = f^n_\omega (x_0)$ satisfies
 \[
 x_n < e^{\sum_{i=0}^{n-1} a_{\omega_i} } e^{-A (\omega)} \delta \le \delta
 \] 
for all $n \ge 0$ and in fact $\lim_{n\to \infty} x_n = 0$. 
This proves the lemma.
\end{proof}

Since the function $r$ is positive almost everywhere, the basin of $\Sigma_2^+ \times \{0\}$ has positive 
standard measure. The same holds for the basin of $\Sigma_2^+ \times \{1\}$. 
It is easily seen that any open set in $\Sigma_2^+ \times \mathbb{I}$ intersects both basins; forward iterations must accumulate 
onto both $\Sigma_2^+ \times \{0\}$ and $\Sigma_2^+ \times \{1\}$ using that the shift operator is an expansion
and $0$ and $1$ occur as attracting fixed points for $f_1$ and $f_2$ respectively.

\begin{proof}[Proof of Theorem~\ref{t:kan}]
We prove the theorem by considering the inverse diffeomorphisms, i.e. a step skew product 
with positive Lyapunov exponents along $\Sigma_2^+ \times \{0\}$ and $\Sigma_2^+ \times \{1\}$.
For the duration of this proof, we consider $F^+ \in \mathcal{S}$ with $L(0) >0 $ and $L(1) >0$.
The following lemmas deal with this.
The theorem will follow by linking the derived statements on the natural extension $F$ of $F^+$ and the statements we wish to prove for its inverse. 
\begin{figure}[htbp]

\begin{center}
 \includegraphics[height=5.2cm]{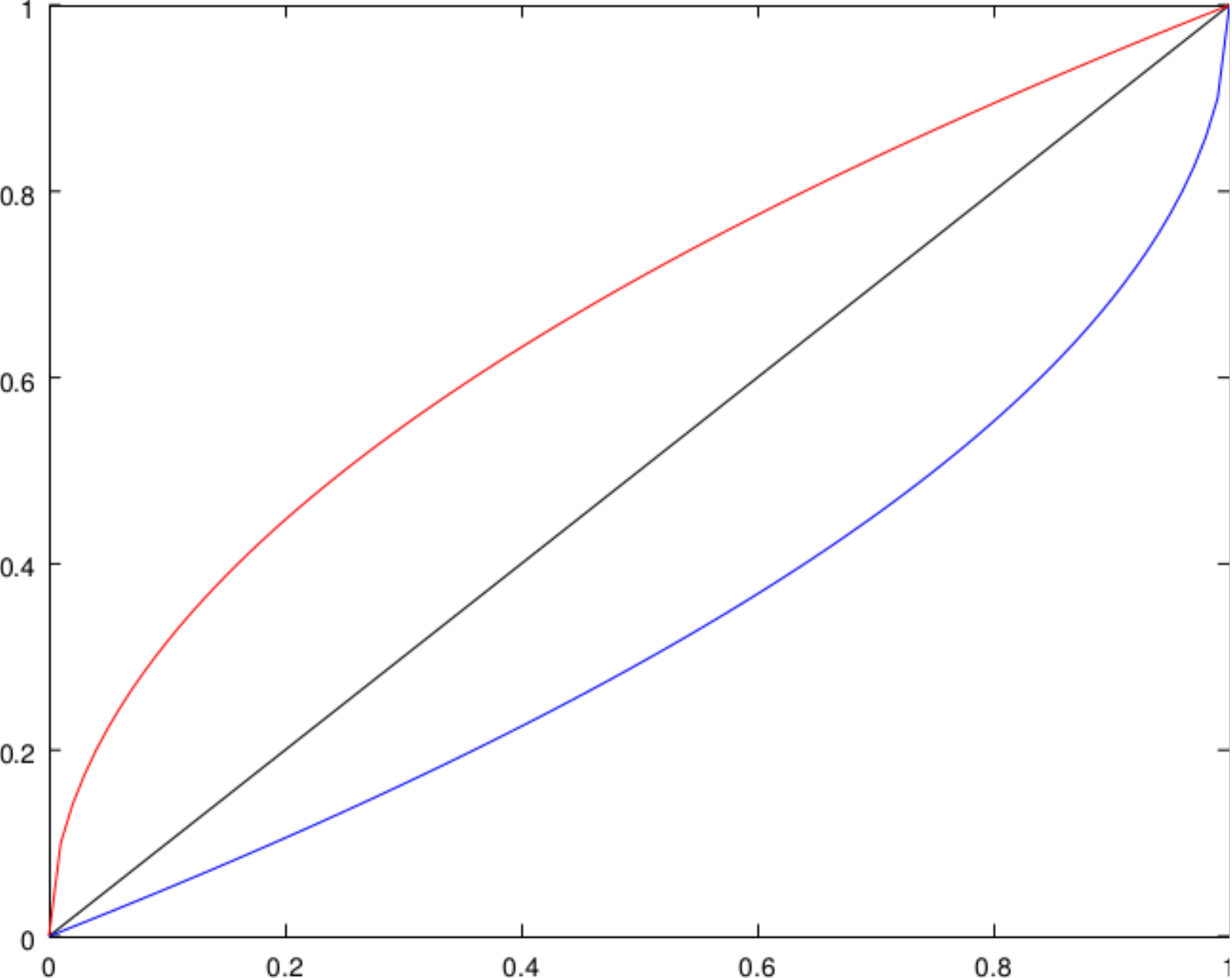}\hspace{0.4cm} 
 \includegraphics[height=5.2cm]{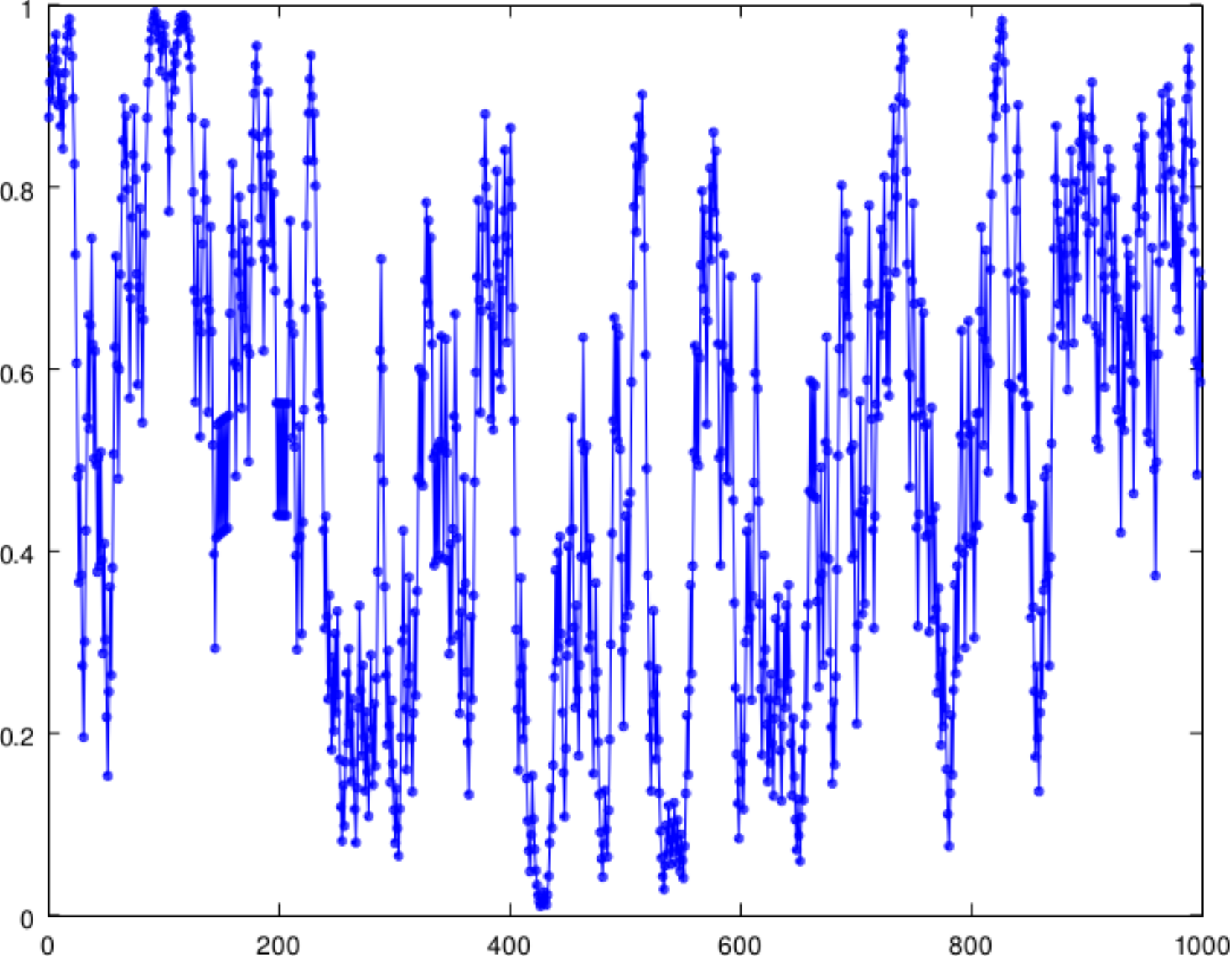} 
\caption{With $r = 1/2$, the diffeomorphisms  $f_1 (x) = x - r x (1-x)$ and $f_2 (x) = x + r x (1-x)$  (picked with probabilities $1/2$) 
give negative Lyapunov exponents at the end points $0,1$. Depicted, in the left frame, are the graphs of the inverse diffeomorphisms
 $f_1^{-1} (x) = \frac{1 - r - \sqrt{(1-r)^2 +4 r x}}{-2r}$ and 
$f_2^{-1} (x) = \frac{1 + r - \sqrt{(1+r)^2 -4 r x}}{2r}$.
The inverse maps give positive Lyapunov exponents at the end points. The right frame shows a time series for the iterated function system generated by  $f_1^{-1}$ and $f_2^{-1}$. 
\label{f:inversekan}}
\end{center}

\end{figure}

We write $\mathcal{P}_\mathbb{I}$ for the space of probability measures on $\mathbb{I}$ equipped with the weak star topology. 
As explained in Appendix~\ref{s:invmeasures}, a stationary measure is a fixed point of $\mathcal{T}: \mathcal{P}_\mathbb{I} \to \mathcal{P}_\mathbb{I}$ given by
\[
 \mathcal{T} m = p_1 f_1 m + p_2 f_2 m,
\]
where $f_i m$ is the push forward measure $f_i m (A) = m (f_i^{-1} (A))$. 
 
 \begin{lemma}\label{l:stat}
Let ${F^+} \in \mathcal{S}$ and  assume $L(0) >0$ and $L(1) >0$.
 Then there exists an ergodic stationary measure $m$ with $m (\{0\} \cup \{1\}) = 0$.
\end{lemma}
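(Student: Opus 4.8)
The plan is to build the measure in three moves: produce \emph{some} stationary measure by averaging along an interior orbit, use the hypotheses $L(0)>0$ and $L(1)>0$ to rule out escape of mass to the endpoints, and then descend to an ergodic component that still lives in $(0,1)$. For the first move, recall that the transfer operator $\mathcal T$ is continuous for the weak star topology on the compact convex set $\mathcal P_{\mathbb I}$, so $\mathcal T$ has fixed points; but the obvious ones $\delta_0$ and $\delta_1$ (indeed $\mathcal T\delta_0=\delta_0$ and $\mathcal T\delta_1=\delta_1$, because $f_i(0)=0$ and $f_i(1)=1$) are useless. Instead fix $x_0\in(0,1)$ and set $\mu_n=\frac1n\sum_{k=0}^{n-1}\mathcal T^k\delta_{x_0}$, the time average of the law of $f^k_\omega(x_0)$ under $\nu^+$. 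By compactness some subsequence $\mu_{n_j}$ converges weakly to a measure $m$, and continuity of $\mathcal T$ together with $\mathcal T\mu_n=\mu_n+\tfrac1n(\mathcal T^n\delta_{x_0}-\delta_{x_0})$ forces $\mathcal Tm=m$; so $m$ is stationary.

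For the no-escape estimate, let $\Phi(x)=-\ln\bigl(2\min(x,1-x)\bigr)$ on $(0,1)$ (a variant of the coordinate $\ln(x/(1-x))$ of the symmetric random walk): a continuous, nonnegative function vanishing at $\tfrac12$ and tending to $+\infty$ at $0$ and at $1$. The point is a one-step drift estimate of Foster--Lyapunov type. Near $0$ one has $f_1(x)<x$ and, shrinking the neighbourhood, $f_2(x)<\tfrac12$, so $\Phi(f_i(x))-\Phi(x)=-\ln\!\bigl(f_i(x)/x\bigr)\to-\ln f_i'(0)$ as $x\to0$, hence
\[
\mathbb E\bigl[\Phi(f_\omega(x))\mid x\bigr]-\Phi(x)=-p_1\ln\!\bigl(f_1(x)/x\bigr)-p_2\ln\!\bigl(f_2(x)/x\bigr)\longrightarrow -L(0)<0 ,
\]
and symmetrically this difference tends to $-L(1)<0$ as $x\to1$. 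Thus there are $\delta>0$ and $c=\tfrac12\min\{L(0),L(1)\}>0$ with drift $\le-c$ on $(0,\delta]\cup[1-\delta,1)$, while on the compact set $K=[\delta,1-\delta]$ the continuous functions $\Phi$, $\Phi\circ f_1$, $\Phi\circ f_2$ are bounded, so the drift there is $\le B$ for some finite $B$. Altogether $\mathbb E[\Phi(f_\omega(x))\mid x]\le\Phi(x)-c+(B+c)\,\mathbbm{1}_K(x)$ for every $x\in(0,1)$. Evaluating this along $x_k=f^k_\omega(x_0)$, taking expectations, telescoping, and using $\Phi\ge0$ yields
\[
0\le\Phi(x_0)-cn+(B+c)\sum_{k=0}^{n-1}\mathbb P\bigl(x_k\in K\bigr),
\]
so $\mu_n(K)=\frac1n\sum_{k<n}\mathbb P(x_k\in K)\ge(cn-\Phi(x_0))/((B+c)n)$, whence $\liminf_n\mu_n(K)\ge c/(B+c)>0$.

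It remains to extract the statement. Since $K$ is closed, $m(K)\ge\limsup_j\mu_{n_j}(K)\ge\liminf_n\mu_n(K)\ge c/(B+c)>0$, so $m\bigl((0,1)\bigr)>0$. Each $f_i$ maps $(0,1)$ bijectively onto itself, so writing $m=m(\{0\})\delta_0+m(\{1\})\delta_1+m((0,1))\,m'$ and applying $\mathcal T$ (using $\mathcal T\delta_0=\delta_0$, $\mathcal T\delta_1=\delta_1$) shows that the normalised restriction $m'=m|_{(0,1)}/m((0,1))$ is again stationary and satisfies $m'(\{0\}\cup\{1\})=0$. Finally $(0,1)$ is a Borel set with $f_i^{-1}((0,1))=(0,1)$, so in the ergodic decomposition of $m'$ (equivalently, of the associated $F^+$-invariant measure, see the appendix) almost every ergodic component is carried by $(0,1)$; any such component is an ergodic stationary measure $m$ with $m(\{0\}\cup\{1\})=0$.

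The one genuinely substantive step is the drift estimate: one must check that the logarithmic potential $\Phi$ still decreases in conditional expectation near each endpoint even though $f_2$ pushes points away from $0$ into the interior (and $f_1$ likewise near $1$), and this is exactly where $L(0)>0$ and $L(1)>0$ enter. I expect the rest — the Krylov--Bogolyubov averaging, the portmanteau inequality, and the passage to an ergodic component off the boundary — to be routine; a minor bookkeeping point is checking that the restriction of a stationary measure to the invariant set $(0,1)$ is stationary and that its ergodic components stay off $\{0,1\}$.
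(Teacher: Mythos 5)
Your proof is correct, and it takes a genuinely different route from the paper's. Both arguments run Krylov--Bogolyubov averaging and both invoke, at the end, the standard fact that the set of stationary measures is a Choquet simplex whose extreme points are precisely the ergodic stationary measures; but the mechanism that keeps mass from leaking to $\{0,1\}$ is different. The paper introduces, for suitable $\alpha,c,q$, the weak-star closed convex set $\mathcal{N}_c$ of measures with the power-law tail bounds $m([0,x))\le cx^{\alpha}$ and $m((1-x,1])\le cx^{\alpha}$ for $x\le q$, shows $\mathcal{T}(\mathcal{N}_c)\subset\mathcal{N}_c$ using $L(0),L(1)>0$, and then carries out the averaging and Krein--Milman extraction entirely inside $\mathcal{N}_c$, so that the resulting ergodic stationary measure automatically avoids the endpoints. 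You instead run a Foster--Lyapunov drift estimate with the logarithmic barrier $\Phi(x)=-\ln\bigl(2\min(x,1-x)\bigr)$: the hypotheses $L(0)>0$, $L(1)>0$ give one-step drift bounded by $-c<0$ near both endpoints, which after telescoping yields a uniform positive lower bound on $\mu_n(K)$ for a compact $K\subset(0,1)$, and the portmanteau inequality transfers this to the limit. Both uses of closedness are correct ($\mathcal{N}_c$ vs. $K$); your restriction-to-$(0,1)$ step and ergodic-decomposition step are also sound, since $(0,1)$ is invariant for both $f_1,f_2$. The paper's approach buys something extra: it produces a stationary measure inside the explicit set $\mathcal{N}_c$, and these quantitative tail bounds are reused later (in the proof of Lemma~\ref{l:hyp}, where the same $\mathcal{N}_c$ is needed for the perturbed operators $\mathcal{T}_\varepsilon$). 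Your approach is more in the spirit of classical Markov-chain stability theory and isolates the role of the drift condition cleanly, at the price of giving only bare existence rather than a tail estimate.
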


\begin{proof}
For 
small $ 0<\alpha<1$, $q>0$, and positive $c$, define
\begin{align*} 
\mathcal{N}_{c} &= \{m \in \mathcal{P}_\mathbb{I} \; ; \; 
\forall~ 0 \le  x \le  q,~ m\big([0,x)\big) \le c x^\alpha
\text{ and }~ m\big((1-x,1]\big) \le c x^\alpha
\}.
\end{align*}
The conditions exclude stationary measures 
supported on the end points $0$ or $1$.
Note that $\mathcal{N}_{c}$ depends on $\alpha$ and $q$;
but 
we do not include this dependence in the notation.
We first show that there exist $c>0$ and $\alpha,q>0$  close to $0$ 
 such that $\mathcal{T} (\mathcal{N}_{c}) \subset \mathcal{N}_c$.

Write $\rho_i = f_i'(0)$.
We claim that there is a small $ \alpha>0 $ such that the assumption $L(0)>0$ 
implies
$\sum_{i=1}^{2} p_i \rho_i^{-\alpha} < 1$.
Namely, since
$\mathop {\lim }\limits_{\alpha \to 0}{ \frac{1-\rho_i^{-\alpha}}{\alpha}}= \ln{\rho_i}$ for $ 1\le  i \le  2$,
$\sum_{i=1}^{2} p_i \ln{\rho_i} >0$ 
implies 
that for sufficiently small $\alpha>0$,
\[
\sum\limits_{i=1}^{2} {p_i}\frac{1-\rho_i^{-\alpha}}{\alpha}>0.
\]
Multiplying by $\alpha$ we get
\[
\sum\limits_{i=1}^{2} {p_i} - \sum\limits_{i=1}^{2} {p_i} \rho_i^{-\alpha}>0,
\]
which implies $\sum_{i=1}^2 {p_i} \rho_i^{-\alpha} <1$, because $\sum_{i=1}^{2}{p_i}=1$. 

Thus, there exists a small $\delta>0$ 
so that 
\begin{align}\label{e:smallerthanone}
\sum_{i=1}^{2} \frac{p_i}{(\rho_i-\delta)^{\alpha}} &<1.
\end{align}
Moreover,
for such $\delta>0$ we are able to choose a sufficiently small $q=q(\delta)>0$ 
so that
\begin{equation}\label{e:fineq}
f^{-1}_{i}(x) \le  \frac{x}{\rho_{i}-\delta},
\; \forall \; 0 \le  x \le  q.
\end{equation}

Take $c$ with \[c q^\alpha >1.\]
Note that this implies that in the definition of $\mathcal{N}_c$, 
 $m\big([0,x)\big) \le c x^\alpha$ and $m\big((1-x,1]\big) \le c x^\alpha$ for
 any $0\le x\le 1$, and not just for $0 \le x \le q$. 
Take a measure $m \in \mathcal{N}_c$.
To prove $\mathcal{T} m \in \mathcal{N}_c$,
we must show that for $ x \le  q$,  
$\mathcal{T} m  \big([0,x)\big) \le  c x^\alpha$. 
Knowing that 
$m \big([0,x)\big) \le  c x^\alpha$ and 
applying \eqref{e:smallerthanone}, \eqref{e:fineq} we  obtain the following estimates:
\begin{align}
\mathcal{T}  m  \big([0,x)\big) 
&=
\sum_{i=1}^2  p_{i} f_{i} m \big([0,x)\big)
=
\sum_{i=1}^2  p_{i} m\left(f^{-1}_{i}[0,x)\right)
\leq
\sum_{i=1}^2  p_{i} m  \left( [0 , \frac{x}{\rho_{i}-\delta}) \right) \nonumber 
\\
\label{e:Tinside}
&\leq
\sum_{i=1}^{2} p_i c \left(\frac{x}{\rho_i-\delta}\right)^\alpha 
=
c \left(\sum_{i=1}^{2} \frac{p_i}{(\rho_i-\delta)^\alpha}\right) x^\alpha 
\leq
cx^\alpha. 
\end{align}
Estimates near the right boundary point are treated in the same manner.

By the
Krylov-Bogolyubov averaging method, 
for a measure $m \in \mathcal{N}_c $ 
there
is a subsequence 
of 
$\{\frac{1}{n}\sum_{r=0}^{n-1} \mathcal{T}^{r} m\}_{n \in \mathbb{N}}$ that is convergent to a probability 
measure
$\hat{m} \in \mathcal{N}_c$
such that
$\mathcal{T} \hat{ m}=\hat{ m}$.

The following additional reasoning shows that there is an ergodic stationary measure in $\mathcal{N}_c$.
The set of stationary measures $\mathcal{M}_\mathbb{I}$ is a convex compact subset of $\mathcal{P}_\mathbb{I}$.
The ergodic stationary measures are the extreme points of it.
Note that $\mathcal{N}_c \cap \mathcal{M}_\mathbb{I}$ is a convex compact subset of  $\mathcal{M}_\mathbb{I}$, which is itself 
also convex and compact.
We claim that the extreme points of $\mathcal{N}_c\cap \mathcal{M}_\mathbb{I}$ are also extreme points of $\mathcal{M}_\mathbb{I}$.
Suppose by contradiction that there are $n_1,n_2 \in \mathcal{M}_\mathbb{I} \setminus (\mathcal{N}_c\cap \mathcal{M}_\mathbb{I})$ 
and the convex combination $m = s n_1 + (1-s) n_2 \in \mathcal{N}_c\cap \mathcal{M}_\mathbb{I}$.
In this case, for $0\le x \le q$,
$n_1 ([0,x)) \le (c/s) x^\alpha$ and $n_1 ( (1-x,1] )  \le (c/s) x^\alpha$ and similar estimates for $n_2$.
 That is,  $x \mapsto n_i ( [0,x) ) / x^\alpha $ and
$x \mapsto n_i ( (1-x,1] ) / x^\alpha $ are bounded.
As $\mathcal{T} m = m$, we have by \eqref{e:smallerthanone}, \eqref{e:Tinside} that $m \in \mathcal{N}_{\tilde{c}}$ for some $\tilde{c} < c$.
It follows that $t n_1 + (1-t) n_2 \in \mathcal{N}_c\cap \mathcal{M}_\mathbb{I}$ for $t$ close to $s$. 
So $s$ is an interior point of the set of values $t$ for which $t n_1 + (1-t) n_2 \in \mathcal{N}_c\cap \mathcal{M}_\mathbb{I}$. 
Since $\mathcal{N}_c\cap \mathcal{M}_\mathbb{I}$ is closed it follows that $n_i \in \mathcal{N}_c\cap \mathcal{M}_\mathbb{I}$ and the claim is proved.  
Since the extreme points of $\mathcal{M}_\mathbb{I}$ are ergodic stationary measures, we conclude that 
the extreme points of $\mathcal{N}_c\cap \mathcal{M}_\mathbb{I}$ are ergodic stationary measures.  
Since the Krein-Milman theorem the set of extreme points of $\mathcal{N}_c \cap \mathcal{M}_\mathbb{I}$ 
is nonempty, there are ergodic stationary measures in $\mathcal{N}_c$.
\end{proof}

A stationary measure $m$ gives an invariant measure $\mu_m$ for the step skew product system $F$, as explained in 
Appendix~\ref{s:invmeasures}. Its conditional measures on fibers $\{\omega\} \times \mathbb{I}$ are denoted by $\mu_{m,\omega}$.
 
\begin{lemma}\label{l:dmeas}
For every ergodic stationary probability measure $m$, 
the conditional measure $\mu_{m,\omega}$ of $\mu_m$
is
a $\delta$-measure for $\nu$-almost every $\omega \in \Sigma_2$.
\end{lemma}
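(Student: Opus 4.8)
The plan is as follows.

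\emph{Reduction.} Since $m$ is ergodic and $\{0\},\{1\}$ are fixed under the iterated function system, $m$ decomposes as $m(\{0\})\delta_0+m(\{1\})\delta_1+m((0,1))\,\bar m$ with each non-zero summand a stationary measure, so extremality in $\mathcal M_{\mathbb I}$ forces $m=\delta_0$, $m=\delta_1$, or $m((0,1))=1$. In the first two cases $\mu_{m,\omega}=\delta_0$ resp.\ $\delta_1$ and there is nothing to prove, so assume $m((0,1))=1$. Running the estimate in the proof of Lemma~\ref{l:stat} as a stopping-time/Chernoff argument for the non-decreasing function $x\mapsto m([0,x))$ — using the distortion bound \eqref{e:fineq} and $\sum_i p_i(\rho_i-\delta)^{-\alpha}<1$ from \eqref{e:smallerthanone}, together with $m(\{0\})=0$ — yields $m\in\mathcal N_C$ for a suitable $C$, that is $m([0,x)),m((1-x,1])\le Cx^\alpha$ for all $x$. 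Recall also from Appendix~\ref{s:invmeasures} that $\mu_m=\int_{\Sigma_2}\delta_\omega\times\mu_{m,\omega}\,d\nu$, with $(f_{\omega_0})_*\mu_{m,\omega}=\mu_{m,\sigma\omega}$ for $\nu$-a.e.\ $\omega$ and $\int\mu_{m,\omega}\,d\nu=m$, and that $\mu_m$ is ergodic for $F$.

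\emph{Representation of $\mu_{m,\omega}$.} For each $\phi\in C(\mathbb I)$ the sequence $n\mapsto\int\phi\,d\big(f_{\omega_{-1}}\circ\cdots\circ f_{\omega_{-n}}\big)_*m$ is a bounded martingale with respect to the filtration generated by $\omega_{-1},\omega_{-2},\dots$ (here one uses $\mathcal T m=m$). Hence for $\nu$-a.e.\ $\omega$ the push-forwards $Q_{n,\omega}:=\big(f_{\omega_{-1}}\circ\cdots\circ f_{\omega_{-n}}\big)_*m$ converge weak-$*$ to a random probability measure which satisfies the above equivariance relation and integrates to $m$, hence equals $\mu_{m,\omega}$. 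So it suffices to show that, for $\nu$-a.e.\ $\omega$ and every $\varepsilon>0$, the measure $Q_{n,\omega}$ gives mass $\ge1-\varepsilon$ to some interval of length $\le\varepsilon$ for infinitely many $n$.

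\emph{Support away from the boundary, and contraction.} Since $\sum_{k=1}^n\ln f_{\omega_{-k}}'(0)\to+\infty$ for $\nu$-a.e.\ $\omega$ (Birkhoff and $L(0)>0$), the walk $j\mapsto\sum_{k=1}^j\ln(\rho_{\omega_{-k}}-\delta)$ drifts to $+\infty$ and has finite infimum, so by \eqref{e:fineq} the set $\big(f_{\omega_{-1}}\circ\cdots\circ f_{\omega_{-n}}\big)^{-1}\big([0,x)\big)$ shrinks to $\{0\}$ as $n\to\infty$ for $x$ small enough (depending on $\omega$); therefore $\mu_{m,\omega}\big([0,x)\big)=0$, and symmetrically (using $L(1)>0$) $\mu_{m,\omega}\big((1-x,1]\big)=0$, so $\operatorname{supp}\mu_{m,\omega}\subset(0,1)$ for $\nu$-a.e.\ $\omega$. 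On the other hand a long block of a single symbol contracts: $f_1^L$ maps any $[a,1-a]\subset(0,1)$ into $[0,f_1^L(1-a)]$ with $f_1^L(1-a)\to0$, and symmetrically $f_2^L$ contracts $[a,1-a]$ toward $1$; and $L(0),L(1)>0$ give recurrence, since (the argument of Lemma~\ref{l:bonmil} with the inequalities reversed) no orbit of a point of $(0,1)$ converges to $0$ or to $1$, whence orbits return infinitely often to a fixed compact $[c_0,1-c_0]\subset(0,1)$. Feeding a long block of $1$'s against the uniform tail bound $m\in\mathcal N_C$ (which bounds, uniformly, the mass any push-forward of $m$ places outside a compact subinterval), and using the equivariance $\mu_{m,\sigma^t\omega}=(f^t_\omega)_*\mu_{m,\omega}$ together with recurrence to control the remaining (outer) factor of the composition, an ergodic-averaging / Borel--Cantelli argument along the $\sigma$-orbit of $\omega$ gives the concentration statement of the previous paragraph; letting $\varepsilon\to0$ shows $\mu_{m,\omega}$ is a $\delta$-measure for $\nu$-a.e.\ $\omega$.

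\emph{Main obstacle.} The delicate step is the last one: the fiber maps are not uniformly contracting on $(0,1)$ — they expand near the endpoints, and the fiber Lyapunov exponent of $\mu_m$ need not be negative — so a long block of $1$'s contracts the bulk of $Q_{n,\omega}$ toward $0$ while the outer factor $f_{\omega_{-1}}\circ\cdots\circ f_{\omega_{-j}}$ may re-inflate a neighbourhood of $0$. One must therefore couple the choice of block and of the truncation $n$ with the recurrence coming from $L(0),L(1)>0$ and with the uniform tail estimate $m\in\mathcal N_C$ — which controls exactly the mass that drifts toward the endpoints, uniformly in time — so that the contraction genuinely persists in the weak-$*$ limit. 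Quantifying this is the technical heart of the proof.
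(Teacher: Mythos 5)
Your plan diverges completely from the paper's argument, and its crucial step --- the one you yourself flag as ``the technical heart'' --- is left open, so as written this is not a proof.

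The paper's proof (following Arnold, Theorem~1.8.4) is short and uses none of the machinery you invoke. Let $X_m(\omega)$ be the smallest median of $\mu_{m,\omega}$. Because $f_1,f_2$ are order-preserving, $x$ is a median of $\mu_{m,\omega}$ if and only if $f_\omega(x)$ is a median of $f_\omega\mu_{m,\omega}=\mu_{m,\sigma\omega}$, so $X_m(\sigma\omega)=f_\omega(X_m(\omega))$ and the sets $\{(\omega,x):x\in J_m^-(\omega)\}$ and $\{(\omega,x):x\in J_m^+(\omega)\}$, with $J_m^-(\omega)=[0,X_m(\omega)]$ and $J_m^+(\omega)=[X_m(\omega),1]$, are $F$-invariant. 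By the definition of median each has fibrewise mass at least $1/2$, hence $\mu_m$-measure at least $1/2$; ergodicity of $\mu_m$ then forces each to have full measure, and intersecting gives $\mu_{m,\omega}=\delta_{X_m(\omega)}$ for $\nu$-a.e.\ $\omega$. No contraction estimate, no tail bound on $m$, and no recurrence argument are needed --- only monotonicity of the fiber maps and ergodicity.

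Your approach --- representing $\mu_{m,\omega}$ as the weak-$*$ martingale limit of $(f_{\omega_{-1}}\circ\cdots\circ f_{\omega_{-n}})_*m$ (which is correct and matches Proposition~\ref{p:fapp}) and then trying to show this limit concentrates using long blocks of a single symbol, the tail bound $m\in\mathcal N_c$, and recurrence --- is a genuinely different strategy. If carried out it would essentially establish the fiberwise contraction/synchronization that the paper proves only later (Lemma~\ref{l:hyp}, via a relative-entropy argument), and which precedents such as \cite{ant84,klenal04} show can be done without a sign condition on the Lyapunov exponent. But you do not carry it out: your final paragraph acknowledges that the fiber maps are not uniformly contracting, that the outer factor $f_{\omega_{-1}}\circ\cdots\circ f_{\omega_{-j}}$ can re-inflate a neighbourhood of the endpoint, and that ``quantifying this is the technical heart of the proof.'' That quantification is precisely what is missing. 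The proposal stops where the work begins, so there is a genuine gap; and it misses the observation that monotonicity plus ergodicity alone already settle the lemma.
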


\begin{proof}
We follow \cite[Theorem 1.8.4]{arn98}.
Consider a $\mu_m$ and its conditional measures $\mu_{m,\omega}$.
Let 
$X_m(\omega)$ be the smallest median of $\mu_{m,\omega}$,
i.e. 
the infimum of all points $x$ for which 
 \[
\mu_{m,\omega}([0,x]) \geq \frac{1}{2}~\text{and}~\mu_{m,\omega}([x,1])\geq \frac{1}{2}.
 \]
The set of medians of $\mu_m$ is a compact interval 
and 
$X_m: \Sigma_{2} \rightarrow \mathbb{I}$ is measurable.
Define
$J^-_m(\omega) = [0,X_m(\omega)]$ for which
by definition 
$\mu_{m,\omega}( J^-_m(\omega))\geq \frac{1}{2}$.
The 
set $J^-_m(\omega)$ is invariant:
since $f_1$ and $f_2$  are increasing, for every $x_1 < x_2$ and $\omega$
we have $f_\omega (x_1)< f_\omega (x_2)$.
This implies that $x$ is a median of $\mu_{m,\omega}$ if and only if $f_\omega (x)$
is a median of $f_\omega \mu_{m,\omega}$. By invariance of $\mu_m$ for $F$
we 
have \[f_\omega \mu_{m,\omega}= \mu_{m,\sigma\omega}.\]
Hence, 
$X_m(\sigma \omega) = f_\omega(X_m(\omega))$ 
which 
implies $J^-_m(\sigma \omega)=f_\omega( J^-_m(\omega))$.

Because $\mu_{m}$ is ergodic and $J^-_m(\omega)$ is 
invariant, 
$\mu_{m,\omega}(J^-_m(\omega))=1$, $\nu$-almost surely.
Applying 
the same argument to $J^+_m(\omega) = [X_m(\omega),1]$,
we obtain 
\[\mu_{m,\omega}(\{X_m(\omega)\}) =1\]
for 
$\{ X_m(\omega)\}= J^-_m(\omega) \cap J^+_m(\omega)$. 
Thus
$\mu_{m,\omega}= \delta_{X_m(\omega)}$ for $\nu$-almost every $\omega \in \Sigma_{2}$.
\end{proof}

The following lemma shows that the set of stationary measures is the triangle consisting 
of convex combinations of $\delta_0$, $\delta_1$ and one other ergodic stationary measure $m$. 

\begin{lemma}\label{l:unique}
There is a unique stationary measure $m$ with $m(\{0\} \cup \{1\}) = 0$.
\end{lemma}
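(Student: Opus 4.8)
The plan is to show uniqueness by combining Lemma~\ref{l:dmeas} with the minimality-type contraction behavior of the iterated function system away from the boundary. First I would suppose, for contradiction, that $m$ and $m'$ are two distinct stationary measures with $m(\{0\}\cup\{1\})=0$ and $m'(\{0\}\cup\{1\})=0$. By Lemma~\ref{l:stat} we may take both to be ergodic (or decompose them into ergodic components, each of which still gives mass zero to the endpoints by the argument in Lemma~\ref{l:stat}, since the set $\mathcal{N}_c$ is preserved and the relevant estimates pass to ergodic components as in the Krein--Milman argument). So it suffices to prove that there is only one \emph{ergodic} stationary measure charging $(0,1)$.

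The key step is the following. By Lemma~\ref{l:dmeas}, an ergodic stationary measure $m$ lifts to the invariant measure $\mu_m$ for the natural extension $F$ whose conditional measures are $\delta$-measures: $\mu_{m,\omega}=\delta_{\xi_m(\omega)}$ for $\nu$-almost all $\omega\in\Sigma_2$, where $\xi_m=X_m$ is a measurable invariant graph, i.e. $\xi_m(\sigma\omega)=f_{\omega_0}(\xi_m(\omega))$. Now I would argue that two such invariant graphs $\xi_m$ and $\xi_{m'}$ must coincide $\nu$-almost everywhere. The mechanism is that backward orbits of the IFS contract on $(0,1)$: since $f_1(x)<x$ and $f_2(x)>x$ on $(0,1)$, a composition $f_2^{\ell}\circ f_1^{k}$ with $k,\ell$ large drives any compact subinterval of $(0,1)$ into an arbitrarily small neighborhood of a point, and more usefully for the natural extension, applying the inverse branches along a $\nu$-typical past forces the fiber value to be determined. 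Concretely, I would use that for $\nu$-almost every $\omega$, reading $\omega$ backwards one sees arbitrarily long blocks of $1$'s and of $2$'s; pushing forward by $f^n_{\sigma^{-n}\omega}$ and using that $f_1$ contracts toward $0$ while $f_2$ contracts toward $1$ (each with a definite rate away from the respective fixed point, but on the open interval the images of $[\varepsilon,1-\varepsilon]$ shrink), one concludes that the diameter of $f^n_{\sigma^{-n}\omega}([\varepsilon,1-\varepsilon])$ tends to $0$ along a subsequence, hence $\xi_m(\omega)$ is pinned down independently of $m$ — provided $\xi_m(\omega)\in[\varepsilon,1-\varepsilon]$ for a positive measure set of $\omega$, which holds for some $\varepsilon>0$ because $m$ gives zero mass to $\{0,1\}$ and is stationary (so $\xi_m(\omega)\in(0,1)$ for $\nu$-a.e.\ $\omega$, and by countable additivity a positive measure set lands in a fixed $[\varepsilon,1-\varepsilon]$). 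An alternative, perhaps cleaner, route: consider the ordered pair; if $\xi_m\neq\xi_{m'}$ on a positive measure set, then WLOG $\xi_m<\xi_{m'}$ there, and by ergodicity of $\sigma$ plus the invariant-graph relation the set $\{\omega:\xi_m(\omega)<\xi_{m'}(\omega)\}$ is $\sigma$-invariant mod $\nu$, hence has full measure; then the ``gap'' $[\xi_m(\omega),\xi_{m'}(\omega)]$ is an invariant family of nondegenerate intervals in $(0,1)$, and its length, averaged, would have to be constant under the dynamics — but pushing forward by long blocks of $f_1$ or $f_2$ (available $\nu$-a.e.) contracts these lengths, contradicting invariance unless the length is $0$ a.e.

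Once $\xi_m=\xi_{m'}$ $\nu$-almost everywhere, the invariant measures $\mu_m$ and $\mu_{m'}$ on $\Sigma_2\times\mathbb{I}$ coincide (both equal $\int \delta_{\xi(\omega)}\,d\nu(\omega)$ in the disintegration over $\nu$), and hence their projections to $\Sigma_2^+\times\mathbb{I}$ agree; by the correspondence in Appendix~\ref{s:invmeasures} between stationary measures and $F$-invariant measures, $m=m'$. Finally, for a general (not necessarily ergodic) stationary $m$ with $m(\{0\}\cup\{1\})=0$: its ergodic decomposition consists of ergodic stationary measures, and I would note none of the ergodic components can be $\delta_0$ or $\delta_1$ (those would contribute endpoint mass), so every ergodic component charges $(0,1)$ and by the uniqueness just proved equals the single such measure; therefore $m$ itself equals it.

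The main obstacle is making rigorous the contraction claim on the open interval $(0,1)$ under long blocks of a single map: $f_1$ and $f_2$ are only $C^2$ diffeomorphisms fixing the endpoints, and while $f_1^k(K)\to$ (neighborhood of $0$) for compact $K\subset(0,1)$, the \emph{rate} and in particular the decay of $\operatorname{diam} f_1^k(K)$ must be controlled uniformly enough that, interleaved along a $\nu$-typical sequence with the invariant-graph constraint, it forces the gap to vanish. The hypotheses $L(0)<0$, $L(1)<0$ are exactly what is used elsewhere, but here (after passing to inverses as in the proof of Theorem~\ref{t:kan}, or directly) the cleanest formulation is that for any $\varepsilon>0$ there is a compact $K_\varepsilon=[\varepsilon,1-\varepsilon]$ with $f_i(K_\varepsilon)\subsetneq K_{\varepsilon'}$ strictly for some $\varepsilon'<\varepsilon$ depending on which $f_i$ and the direction; iterating and using that $\nu$-a.e.\ $\omega$ contains blocks of every length, the nested images of $K_\varepsilon$ have diameters $\to 0$, which is the quantitative input I would isolate as a preliminary sublemma before running the contradiction argument above.
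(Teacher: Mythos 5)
Your opening steps (reduce to ergodic measures, lift to an invariant graph $\xi_m$ with $\delta$-conditionals via Lemma~\ref{l:dmeas}) match the paper's, and the observation that $\{\omega : \xi_m(\omega) < \xi_{m'}(\omega)\}$ is $\sigma$-invariant mod $\nu$ is correct. From that point on, however, you take a genuinely different route, and the route has a real gap.

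The paper's key step is much softer than contraction. It observes that $\mathrm{supp}(m_i)$ is invariant and, since $f_1$ moves points left and $f_2$ moves points right, the convex hull of $\mathrm{supp}(m_i)$ equals $\mathbb{I}$ for $i=1,2$. Combined with monotonicity and the weak-star pullback limit $f^n_{\sigma^{-n}\omega}m_i \to \delta_{X_{m_i}(\omega)}$ (from Proposition~\ref{p:fapp} and Lemma~\ref{l:dmeas}), this forces $\lim_n f^n_{\sigma^{-n}\omega}(x) = X_{m_i}(\omega)$ for every $x \in (0,1)$ and $\nu$-a.e.\ $\omega$: pick $y_- < x < y_+$ in $\mathrm{supp}(m_i)$; positive $m_i$-mass lies both in $[0,y_-]$ and in $[y_+,1]$, and for the pushed-forward mass to concentrate at $X_{m_i}(\omega)$ one needs both $f^n_{\sigma^{-n}\omega}(y_-)$ and $f^n_{\sigma^{-n}\omega}(y_+)$ to approach $X_{m_i}(\omega)$, which pins $f^n_{\sigma^{-n}\omega}(x)$. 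Then for $x_1 > x_2$ one gets $X_{m_1}(\bar\omega) \ge X_{m_2}(\bar\omega)$, contradicting the assumed strict inequality. No rate, no Lyapunov exponent, no block structure of $\omega$ is needed.

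Your argument instead hinges on the claim that $\mathrm{diam}\, f^n_{\sigma^{-n}\omega}([\varepsilon,1-\varepsilon]) \to 0$ along a subsequence because $\nu$-a.e.\ $\omega$ contains long blocks of a single symbol. This does not follow. First, the intervals $I_n = f^n_{\sigma^{-n}\omega}([\varepsilon,1-\varepsilon])$ are not nested in $n$, so a long block of $1$'s occurring deep in the past can have its contraction undone by the subsequent symbols: in the regime of this lemma ($L(0)>0$, $L(1)>0$) the map $f_2$ expands near $0$, so after $f_1^k$ squeezes the image toward $0$, a following block of $2$'s inflates it again before recompressing near $1$. One would need the block to occur at the \emph{tail} positions $\omega_{-k},\dots,\omega_{-1}$ together with control on where $\xi_m(\sigma^{-k}\omega)$ sits at the start of the block, and even then one only obtains $\nu(\{\delta < c_k\}) > 0$ for sets whose measure can shrink like $p_1^k$; that does not yield $\nu(\{\delta = 0\}) > 0$. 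Second, the claim that the gap's ``length, averaged, would have to be constant under the dynamics'' is false: the gap transforms by $\delta(\sigma\omega) = f'_{\omega_0}(c(\omega))\,\delta(\omega)$, and neither $\delta$ nor its average is conserved. Third — and this is the deeper structural problem — the honest way to make your contraction rigorous is to show that the Lyapunov exponent along the invariant graph is negative, which is exactly Lemma~\ref{l:hyp} in Section~4; but that lemma's proof uses the uniqueness of the stationary measure, i.e.\ this very lemma. So the contraction route, carried out properly, is circular. The convex-hull-plus-monotonicity argument in the paper is precisely the device that avoids this.
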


\begin{proof}
Suppose there are two different such stationary measures $m_1$, $m_2$.
We may take $m_1$, $m_2$ to be ergodic stationary measures.
This corresponds to two ergodic invariant measures $\mu_{m_1} \neq \mu_{m_2}$ for $F$.
By Proposition~\ref{p:fapp} and Lemma~\ref{l:dmeas} there are measurable 
functions 
$X_{m_i}:\Sigma_{{2}}\rightarrow \mathbb{I}$
and sets
$D_i \subset \Sigma_{{2}}$ with $\nu(D_i)=1$, for $i=1,2$, such 
that
\[
\lim_{n\to \infty} f^n_{\sigma^{-n} \omega} m_i=\delta_{X_{m_i}(\omega)} 
\]
for every $\omega \in D_i$. From $\nu(D_i)=1$ we have 
$\nu (D_1 \cap D_2) =1$.
Since $\mu_{m_1},\mu_{m_2}$ are mutually singular we have that for 
 a generic
$\bar{\omega} \in D_1 \cap D_2$,  
$
X_{m_1}(\bar{\omega}) \neq X_{m_2}(\bar{\omega})
$.
%
Without loss of generality suppose that $X_{m_1}(\bar{\omega})< X_{m_2}(\bar{\omega})$.

Observe that the supports of $m_1$ and $m_2$ are invariant:
\[
\mathrm{supp}\, (m_i) = f_1 (\mathrm{supp}\, (m_i)) \cup f_2 (\mathrm{supp}\, (m_i))
\] 
for $i=1,2$. 
The convex hulls of the supports of $m_1$ and $m_2$ therefore both equal $\mathbb{I}$.
We can find generic points $(\bar{\omega},x_1)$ and $(\bar{\omega},x_2)$  
for $m_1$ and $m_2$ such that $x_1 > x_2$. Because  
\[ \lim_{n\to \infty} f^n_{\sigma^{-n} \bar{\omega}}(x_i)   = X_{m_i}(\bar{\omega})\]
and  $f_1$, $f_2$  are both strictly increasing, we conclude 
that 
$X_{m_2}(\bar{\omega})< X_{m_1}(\bar{\omega})$,  contradicting our assumption.
Thus, 
$\mu_{m_1} = \mu_{m_2}$.
\end{proof}

By Proposition~\ref{p:fapp}, Lemmas~\ref{l:dmeas} and \ref{l:unique}, there exists
a measurable function $\xi: \Sigma_2 \to \mathbb{I}$
such that
\[
\lim_{n\to \infty} f^n_{\sigma^{-n} \omega} m = \delta_{\xi(\omega)}, 
\]
for $\nu$-almost all $\omega$,
where $m$ is the stationary measure with $m(\{0\} \cup \{1\}) =0$.
As the convex hull of the support of $m$ equals $\mathbb{I}$ and $f_1$, $f_2$ are increasing,
this implies 
\[
\lim_{n\to \infty} f^n_{\sigma^{-n} \omega}(x) =\xi(\omega)
\]
for every $x \in (0,1)$.
Again since the diffeomorphisms $f_1$, $f_2$ are increasing, 
for the inverse diffeomorphisms 
$(f^n_{\sigma^{-n}\omega})^{-1} = f^{-1}_{\omega_{-n}} \circ \cdots \circ f^{-1}_{\omega_{-1}}$   this yields
\[
\lim_{n\to\infty} (f^n_{\sigma^{-n}\omega})^{-1} (y) = 1
\]
if $y > \xi(\omega)$ and
\[
\lim_{n\to\infty} (f^n_{\sigma^{-n}\omega})^{-1} (y) = 0
\]
if $y < \xi(\omega)$.
 Note that this also shows that for the inverse diffeomorphisms, the union of the basins of attraction
 of $\Sigma_2 \times \{0\}$ and $\Sigma_2 \times \{1\}$ has full standard measure.
\end{proof}

Let us give some pointers to further research literature: a discussion of a step skew product system over the full shift on two symbols, with piecewise linear fiber maps is in \cite{alsmis14}. 
Several articles discuss extensions to skew product systems that are not step skew product systems.
We refer the reader in particular to \citep{klesal11,ilyklesal08} and \cite[Section~11.1]{bondiavia05}.
Further studies that quantify the phenonomenon are  
\citep{ottsomalekanyor93,kel14}. 
See \citep{ily10,ily11,ghahom16} for related work on so-called thick attractors (attractors of positive standard measure).

\section{Master-slave synchronization}

The proof of Theorem~\ref{t:kan} relies on an analysis of step skew product systems with
positive Lyapunov exponents 
at the boundaries.
The following result further discusses such step skew product systems.
It describes a synchronization phenomenon that is illustrated in the right frame of Figure~\ref{f:inversekan2}.
 \begin{figure}[htbp]
 
 \begin{center}
  \includegraphics[height=5.2cm]{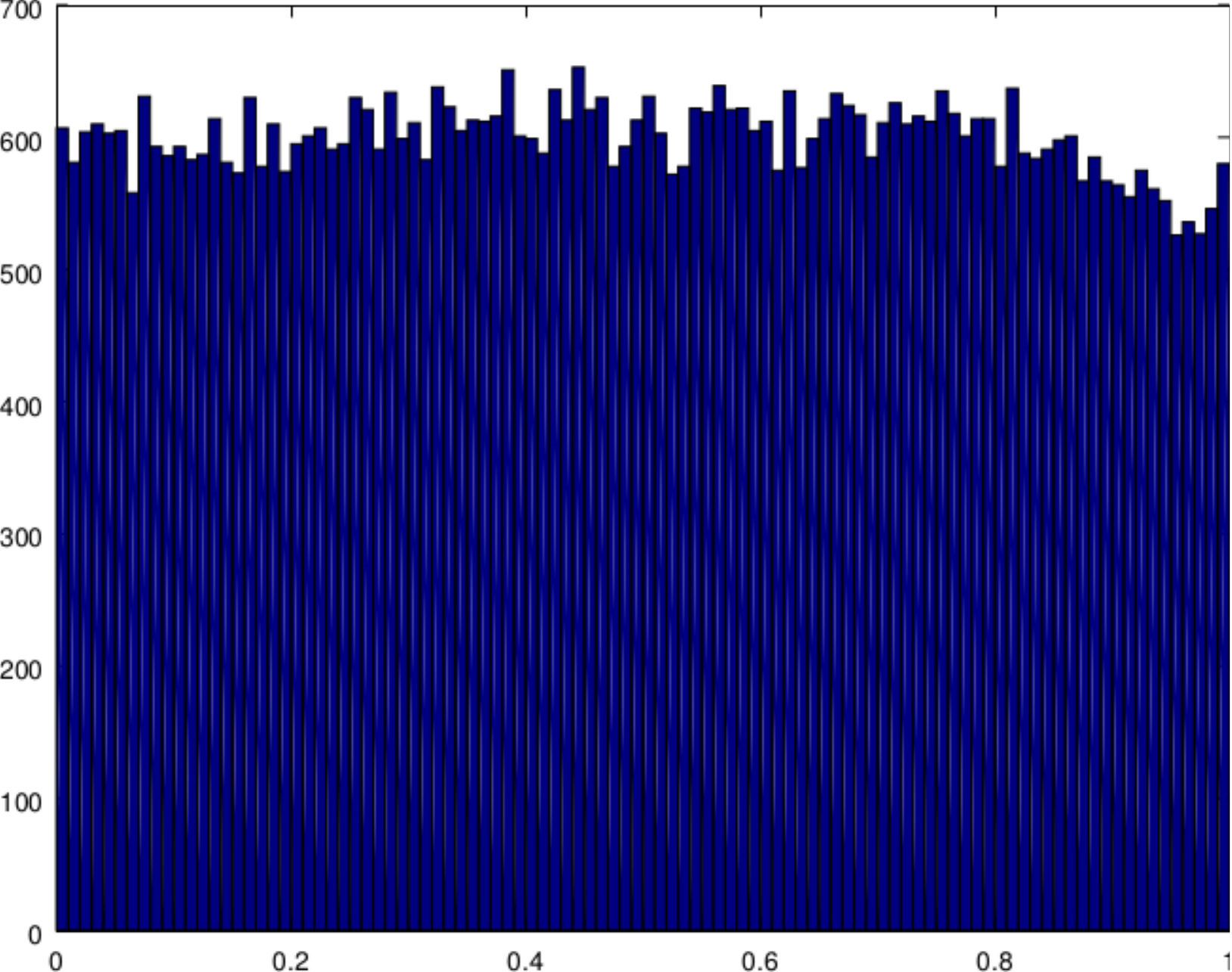}\hspace{0.4cm} 
  \includegraphics[height=5.2cm]{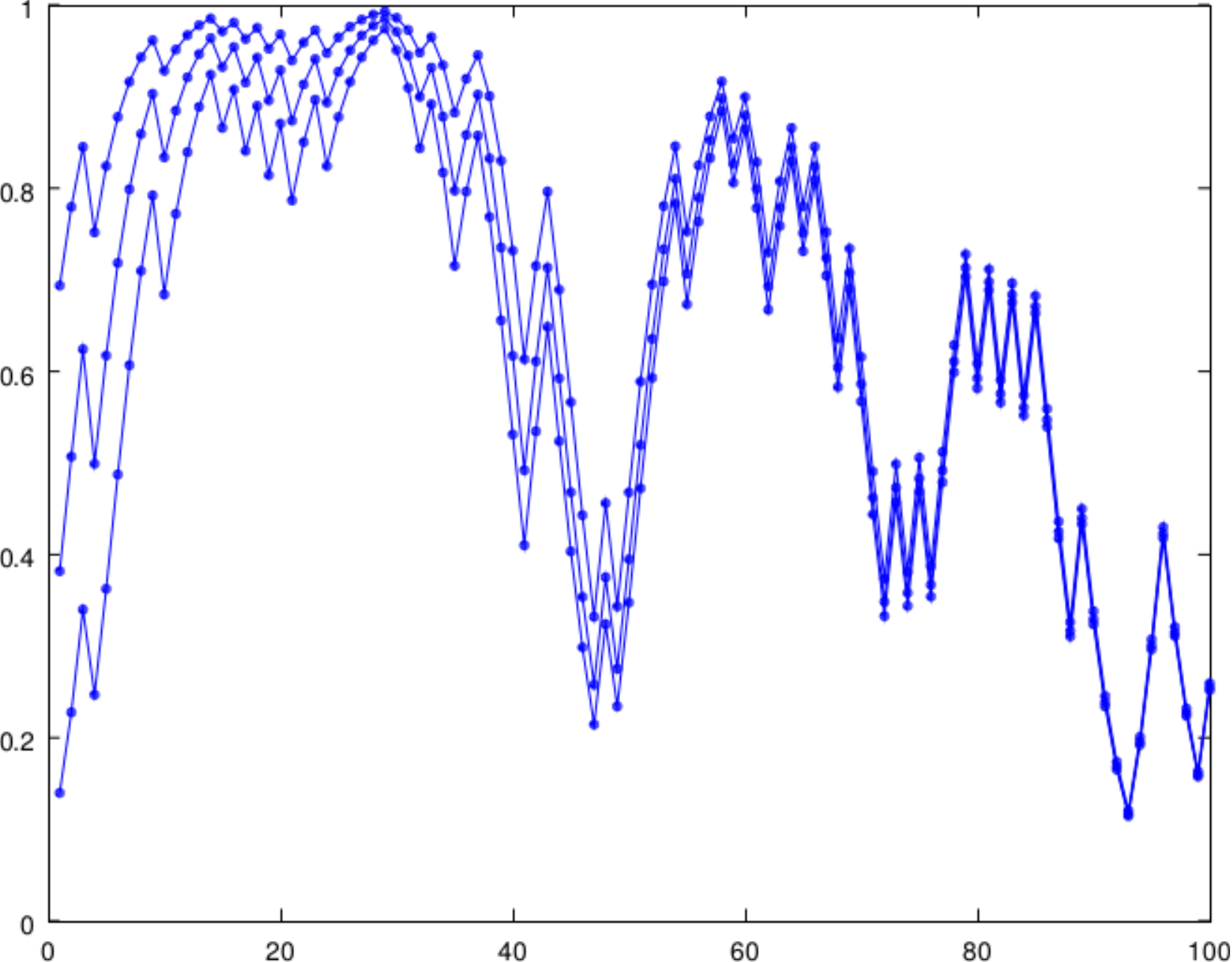} 
 \caption{The left frame shows a numerically computed histogram for a time series of the iterated function systems generated by
 the same diffeomorphisms $f_1^{-1}$ and $f_2^{-1}$ used in Figure~\ref{f:inversekan}.
 The right frame indicates asymptotic convergence of orbits within fibers: 
 it depicts time series for three different initial conditions in $\mathbb{I}$ with the same $\omega$.
 \label{f:inversekan2}}
 \end{center}
 
 \end{figure}
We see an example of master-slave synchronization, which refers to synchronization caused by external forcing. 
It is explained by a single attracting invariant graph
for the skew product system \citep{sta97}.
From a slightly different perspective one can also view this as synchronization by noise, where common noise synchronizes orbits with different initial conditions.

In a general context of skew product systems $F (y,x) = (g(y), f(y,x))$ on a product  $Y \times X$ of metric spaces $Y,X$, as in \eqref{e:sps},
master-slave synchronization is given by the following picture. If $\{ (y,\xi(y)) \;\mid\; y \in Y \}$ is a globally attracting graph, then the orbits $F^n (y,x_1)$ and $F^n (y,x_2)$
converge to each other. In particular if one observes the dynamics of the $x$-variable, one has
\begin{align*} 
\lim_{n\to \infty} d  ( f^n (y,x_1)   , f^n ( y,x_2))&=  0,
\end{align*} 
where $d$ is a metric on $X$ and $F^n (y,x) = (g^n(y) , f^n(y,x))$. 
An illustrative example, of linear differential equations forced by the Lorenz equations,  
is given by Pecora and Carroll in \cite{peccar90}.
We refer to \cite{pikroskur01} for an explanation of synchronization in a range of contexts.

The following result describes a similar effect for step skew product systems $F^+ \in \mathcal{S}$; the proof
employs a measurable invariant graph for the natural extension $F$ of $F^+$. 

\begin{theorem}\label{t:sync}
Let $F^+ \in \mathcal{S}$ and assume $L(0)>0$ and $L(1)>0$.
Let $x_0,y_0 \in (0,1)$. Then
for $\nu^+$-almost all $\omega$,
\[ \lim_{n\to \infty} \left| f^n_\omega (x_0) - f^n_\omega (y_0) \right| =0.\]
\end{theorem}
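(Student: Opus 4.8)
My plan is to build on the machinery already set up in the proof of Theorem~\ref{t:kan}. Under $L(0)>0$ and $L(1)>0$ there is a unique ergodic stationary measure $m$ with $m(\{0\}\cup\{1\})=0$, and the natural extension $F$ carries the invariant measurable graph $\xi:\Sigma_2\to\mathbb{I}$: the value $\xi(\omega)$ depends only on $(\omega_i)_{i\ge 0}$, one has $f_{\omega_0}(\xi(\omega))=\xi(\sigma\omega)$, and for $\nu$-almost every $\omega$ the maps $f^n_{\sigma^{-n}\omega}$ converge to the constant $\xi(\omega)$ — and, since each $f^n_{\sigma^{-n}\omega}$ is an increasing homeomorphism, this convergence is uniform on compact subsets of $(0,1)$. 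Because $|f^n_\omega(x_0)-f^n_\omega(y_0)|\le\mathrm{diam}\,f^n_\omega(I)$ with $I=[\min(x_0,y_0),\max(x_0,y_0)]\subset(0,1)$, it suffices to show that for $\nu^+$-almost every $\omega$ one has $\mathrm{diam}\,f^n_\omega([a,b])\to 0$ for every $[a,b]\subset(0,1)$; and since $f^n_\omega([a,b])$ depends only on $(\omega_0,\dots,\omega_{n-1})$ while $\nu$ is the product of the Bernoulli measures on the coordinates of negative and of non-negative index, this is equivalent to the corresponding statement for $\nu$-almost every two-sided $\omega$.

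I would prove this by combining two facts. First, for each fixed $n$ the blocks $(\omega_0,\dots,\omega_{n-1})$ and $(\omega_{-n},\dots,\omega_{-1})$ have the same law under $\nu$, and $f^n_\omega$ and $f^n_{\sigma^{-n}\omega}$ are the same measurable function of the respective block; hence $\mathrm{diam}\,f^n_\omega([a,b])$ and $\mathrm{diam}\,f^n_{\sigma^{-n}\omega}([a,b])$ are equidistributed, so the almost sure convergence $f^n_{\sigma^{-n}\omega}\to\xi(\omega)$ gives $\nu\bigl(\mathrm{diam}\,f^n_\omega([a,b])>\varepsilon\bigr)\to 0$, and in particular $\liminf_n\mathrm{diam}\,f^n_\omega([a,b])=0$ for $\nu$-almost every $\omega$, for every $[a,b]\subset(0,1)$ and $\varepsilon>0$. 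Second, the fiber Lyapunov exponent of the $F^+$-invariant ergodic measure $\nu^+\times m$ equals $\Lambda=\int_0^1 L(x)\,dm(x)$ and is negative: concavity of the logarithm gives $\int_0^1\ln(f^n_\omega)'(t)\,dt\le\ln\int_0^1(f^n_\omega)'(t)\,dt=0$ for every $\omega$ and $n$, so integrating over $\omega$ and passing to the limit — using that the averages $\frac1n\sum_{k=0}^{n-1}\mathcal{T}^k\mathrm{Leb}$ converge weakly to $m$, which holds because the repelling boundaries preclude escape of mass to $\{0,1\}$ and $m$ is the unique interior stationary measure — yields $\Lambda\le 0$, while $\Lambda<0$ because the $f_i$ are not affine. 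By Birkhoff's ergodic theorem, together with the fact that (thanks to $L(0),L(1)>0$) the orbit of every point of $(0,1)$ equidistributes with respect to $\nu^+\times m$, one gets $\frac1n\sum_{k=0}^{n-1}\ln f'_{\omega_k}\bigl(f^k_\omega(a)\bigr)\to\Lambda<0$ for every $a\in(0,1)$ and $\nu^+$-almost every $\omega$; feeding this and the $C^2$ estimate $\mathrm{diam}\,f^{n+1}_\omega([a,b])\le\mathrm{diam}\,f^n_\omega([a,b])\cdot f'_{\omega_n}\bigl(f^n_\omega(a)\bigr)\bigl(1+C\,\mathrm{diam}\,f^n_\omega([a,b])\bigr)$ into a standard bootstrapping argument, which uses the first fact to guarantee the diameters become small at some time, upgrades $\liminf=0$ to $\mathrm{diam}\,f^n_\omega([a,b])\to 0$. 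Applying this for all rational $[a,b]$ and then sandwiching gives $\mathrm{diam}\,f^n_\omega([a,b])\to0$ for all $[a,b]\subset(0,1)$ simultaneously, $\nu^+$-almost surely, which is what we want.

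The main obstacle is the strict negativity $\Lambda<0$ — equivalently, ruling out that $\mathrm{diam}\,f^n_\omega([a,b])$ oscillates with $\limsup>0$ while $\liminf=0$. The subtle point is that near each repelling boundary the diameters can grow, so one must quantify that this growth is self-limiting: the expanding boundary fixed-point map simultaneously carries the orbit away from that boundary, so an orbit of a point of $(0,1)$ spends an asymptotically negligible fraction of its iterates near $\{0,1\}$ — a Birkhoff estimate on the repelling side, in the spirit of Lemma~\ref{l:bonmil} — which, with the concavity bound, forces the time-averaged log-derivative to be genuinely negative. Carrying this out, along with the $C^2$ distortion control needed in the bootstrapping, is the technical heart of the argument.
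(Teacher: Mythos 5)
Your route differs genuinely from the paper's once the common starting point (pullback convergence to $\xi(\omega)$, hence $\liminf_n \mathrm{diam}\,f^n_\omega([a,b])=0$ a.s.) is in place. Both proofs hinge on strict negativity of the fiber Lyapunov exponent $\lambda=\sum_i p_i\int_0^1\ln f_i'\,dm$ for the unique interior stationary measure $m$. After that, the paper uses $\lambda<0$ to produce a local stable manifold $W^s(\omega)=(r^b(\omega),r^t(\omega))\ni\xi(\omega)$ and shows $W^s(\omega)=(0,1)$ by a structural contradiction: $r^t<1$ a.e.\ would yield a second interior stationary measure, violating Lemma~\ref{l:unique}. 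You instead propose a direct bootstrapping from the $\liminf=0$ statement using a $C^2$ distortion estimate. That is a plausible alternative which avoids the auxiliary graph $r(\omega)$, though it needs more care than the phrase ``standard bootstrapping'' suggests (you must match the time at which the diameter is small to a time from which the forward Birkhoff averages are controlled, a tempered-function argument), and it also quietly uses that the orbit of \emph{every} $a\in(0,1)$ equidistributes with respect to $\nu^+\times m$, which itself requires proof.

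The real gap is your claim that $\lambda<0$. The Jensen/concavity observation is a nice way to obtain $\lambda\le 0$ (granting the Cesaro convergence $\frac1n\sum_{k<n}\mathcal T^k\mathrm{Leb}\to m$, which needs an escape-of-mass control like the $\mathcal N_c$-invariance in Lemma~\ref{l:stat}). But upgrading to $\lambda<0$ is not ``because the $f_i$ are not affine.'' Jensen's inequality $\int_0^1\ln(f^n_\omega)'\,dt\le 0$ is strict for each fixed $n$ precisely because $f^n_\omega\neq\mathrm{id}$, yet that pointwise strictness does not survive division by $n$ and the limit $n\to\infty$: the gap can shrink to zero. Your concluding paragraph appeals to a Birkhoff estimate near the boundary, but that again only reproves $\lambda\le 0$, not strict negativity. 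The paper treats this step as a lemma in its own right (Lemma~\ref{l:hyp}), proved by a genuinely different tool --- relative entropy, after approximating the fiber maps by maps with absolutely continuous noise and letting the noise amplitude shrink (following Baxendale and Kleptsyn--Volk). As written, your proof has a hole at precisely the point you yourself single out as ``the technical heart,'' and a separate argument for $\lambda<0$ is indispensable.
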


\begin{proof}
The proof of Theorem~\ref{t:kan} gives the existence of
 an invariant measurable graph $\xi: \Sigma_2 \to \mathbb{I}$
 so that given any $x_0 \in (0,1)$, one has that for $\nu$-almost all $\omega$,
\begin{align}\label{e:xi}
 \lim_{n\to\infty} \left| f^n_{\sigma^{-n} \omega} (x_0) - \xi(\omega) \right| &= 0.
\end{align}
As $\nu$ is invariant for $\sigma$, this proves that $f^n_\omega (x_0)$ converges to $\xi(\sigma^n \omega)$ in probability. 
This gives the existence of a subsequence $n_k \to \infty$ as $k\to \infty$ so that
\[ 
\lim_{k\to\infty} \left| f^{n_k}_\omega (x_0) - \xi(\sigma^{n_k} \omega) \right| =0
\]
(see e.g. \cite[Theorem~II.10.5]{shi84}).
We have thus obtained the weaker statement that
for $\nu^+$-almost all $\omega$,
\[
 \liminf_{n\to \infty} \left| f^n_\omega (x_0) - \xi(\sigma^{n} \omega)\right| =0.
\]

We provide  a sketch of the argument to show that this is also true with  limit replacing the limit inferior,
which would imply the theorem.
The measure $(\text{id} , \xi) \nu$ on $\Sigma_2 \times \mathbb{I}$, with conditional measures $\delta_{\xi(\omega)}$ and marginal $\nu$, 
is invariant 
for the natural extension $F: \Sigma_2 \times \mathbb{I} \to \Sigma_2 \times \mathbb{I}$
of $F^+$. It corresponds to an invariant measure $\nu^+ \times m$ for $F^+$ by Proposition~\ref{p:fapp} and the observation that
$\xi (\omega)$ depends on $(\omega_n)_{-\infty}^{-1}$ only.

\begin{lemma}\label{l:hyp}
With  respect to the measure $\nu^+ \times m$, the system $F^+$ has a negative Lyapunov exponent;
\[
\lambda = \sum_{i=1}^2 p_i \int_{\mathbb{I}} \ln (f_i'(x)) \, dm(x) <0.
\]
\end{lemma}

\begin{proof}
One can follow the argument for \cite[Theorem~7.1]{klevol14} (an analogue of \cite[Theorem~4.2]{bax89}). We describe the steps.
A key idea is the use of the notion of relative entropy; the relative entropy $h(m_1 \vert m_2)$ of a probability measure $m_1$ on $\mathbb{I}$ with respect to a
probability measure $m_2$  on $\mathbb{I}$  is given by
\[
h(m_1 \vert m_2 ) = \sup_{\psi \in C^0(\mathbb{I})} \left( \ln \left( \int_{\mathbb{I}} e^{\psi(x)} \, dm_1 (x) \right) - \int_{\mathbb{I}} \psi(x) \, dm_2(x)   \right).
\]
The following properties hold \cite{donvar75}.
\begin{enumerate}[label=(\roman*)]
\item  \label{i:re1}
$0 \le h(m_1 \vert m_2) \le \infty$;
\item \label{i:re2}
$h( m_1 \vert m_2) = 0$ if and only if $m_1 = m_2$;
\end{enumerate}
%

A relation between  Lyapunov exponent and relative entropy can be derived for absolutely continuous stationary measures.
The argument now involves maps with absolutely continuous noise with shrinking amplitude to approximate the fiber diffeomorphisms.
Such a perturbed system admits an absolutely continuous stationary measure. One uses the relation 
 between the Lyapunov exponent and relative entropy for this absolutely continuous stationary measure and considers the limit
 where the noise amplitude shrinks to zero.

Let $\zeta$ be a random variable with values in $[0,1]$ that is uniformly distributed.
For small positive values of $\varepsilon$,
let \[f_{i,\zeta} (x) = (1-\varepsilon)f_i(x) + \zeta \varepsilon.\]
Note that \[\zeta \varepsilon = f_{i,\zeta} (0) \le f_{i,\zeta} (x) \le f_{i,\zeta} (1) = 1 - \varepsilon + \zeta \varepsilon,\]
so that  $f_{i,\zeta}$ maps $[0,1]$ into $[0,1]$, for each value of $\zeta$ in $[0,1]$.
The iterated function system generated by the maps $f_{i,\zeta}$  has a stationary measure
\begin{align}\label{e:Te}
m_\varepsilon &= \sum_{i=1}^2 p_i \int_{0}^{1} f_{i,\zeta} m_\varepsilon \, d\zeta.
\end{align}
Note that $m_\varepsilon$ is a fixed point of
an operator $\mathcal{T}_\varepsilon$ where $\mathcal{T}_\varepsilon m_\varepsilon$ is defined by the right hand side of \eqref{e:Te}.
One can show that $m_\varepsilon$ has a smooth density \cite{zmahom07}.
Moreover, with $\mathcal{N}_c$  the closed set
of probability measures considered in the proof of Lemma~\ref{l:dmeas}, one has that for
for suitable values of $\alpha,c,q$,
$m_\varepsilon \in \mathcal{N}_c$ for all small positive $\varepsilon$.
This is true since $\mathcal{T}_\varepsilon $ maps $\mathcal{N}_c$ into itself for suitable values of $\alpha,c,q$.
To see this follow the proof 
of Lemma~\ref{l:dmeas} with $\mathcal{T}_\varepsilon$ replacing $\mathcal{T}$.
The main calculation analogous to \eqref{e:Tinside} 
is straightforward noting  
that $f_{i,\zeta}^{-1} ([0,x)) \subset f_{i,0}^{-1} ([0,x))$ for all $\zeta \in [0,1]$:
\begin{align*}
\mathcal{T}_\varepsilon  m_\varepsilon  \big([0,x)\big) 
&=
\sum_{i=1}^2  p_{i} \int_0^1 f_{i,\zeta} m_\varepsilon \big([0,x)\big) \, d\zeta
=
\sum_{i=1}^2  p_{i} \int_0^1 m_\varepsilon\left(f^{-1}_{i,\zeta}[0,x)\right) \, d\zeta 
\\
&\le \sum_{i=1}^2  p_{i} \int_0^1 m_\varepsilon\left(f^{-1}_{i,0}[0,x)\right) \, d\zeta
\leq
\sum_{i=1}^2  p_{i} m_\varepsilon  \left( [0 , \frac{x}{\rho_{i}-\delta}) \right) 
\\
&\leq
\sum_{i=1}^{2} p_i c \left(\frac{x}{\rho_i-\delta}\right)^\alpha 
=
c \left(\sum_{i=1}^{2} \frac{p_i}{(\rho_i-\delta)^\alpha}\right) x^\alpha 
\leq
cx^\alpha. 
\end{align*}
A similar argument can be employed near the  boundary point $1$, for $\varepsilon$ small.

The Lyapunov exponent for the stationary measure $m_\varepsilon$ is given by
\begin{align}\label{e:lyap}
\lambda_\varepsilon &= \sum_{i=1}^2 p_i \int_0^1 \int_{\mathbb{I}}  \ln(f_{i,\zeta}' (x)) \, dm_\varepsilon (x) d\zeta.
\end{align}
Since $m_\varepsilon$ has a smooth and bounded density, one can prove the relation (see \cite[Proposition~7.2]{klevol14})
\begin{align}\label{e:72}
\lambda_\varepsilon &= -\sum_{i=1}^2 p_i \int_0^1 h ( f_{i,\zeta} m_\varepsilon \vert m_\varepsilon  ) \, d\zeta.
\end{align}
By \ref{i:re1}, $\lambda_\varepsilon \le 0$. 
By \ref{i:re2}, $\lambda_\varepsilon = 0$ if and only if $f_{i,\zeta} m_\varepsilon = m_\varepsilon$ 
for all $\zeta \in [0,1]$ and $i=1,2$. As the latter is not possible,  $\lambda_\varepsilon <0$.

Now take the limit $\varepsilon \to 0$.
Then $m_\varepsilon \to m$ since $\mathcal{T}_\varepsilon$ is continuous and depends continuously on $\varepsilon$ (compare Lemma~\ref{l:c0}), convergence is in $\mathcal{N}_c$, and $m$ is the unique stationary measure  in $\mathcal{N}_c$.
From \eqref{e:lyap} one sees that $\lambda_\varepsilon \to \lambda$ as $\varepsilon \to 0$ and 
we obtain $\lambda \le 0$. 
Since the relative entropy is lower 
semi-continuous in $\varepsilon$ as the supremum of continuous functionals, one finds from \eqref{e:72} that 
\[
0 \le \sum_{i=1}^2 p_i h ( f_{i} m \vert m  )  \le  -\lambda.
\]
This shows that $h ( f_i m \vert m) = 0$ for $i=1,2$ in case $\lambda = 0$. This is clearly not the case by \eqref{i:re2}, 
as $f_1 m \ne f_2 m \ne m$. So we have $\lambda <0$.
\end{proof}

Because of this lemma, for $\nu$-almost all $\omega$, $\xi (\omega)$ from \eqref{e:xi} 
has a stable manifold $W^s (\omega)$ that is an open neighborhood
of $\xi(\omega)$ in $\mathbb{I}$. To see this one can refer to  general theory for nonuniformly hyperbolic systems as in \cite{barpes07}, or
apply reasoning as in Lemma~\ref{l:bonmil}.
For each $x\in W^s (\omega)$, 
\begin{align*}
\lim_{n\to \infty} \left| f^n_\omega (x) - \xi (\sigma^n \omega) \right| = 0.
\end{align*}
Write 
\[
W^s (\omega) = (r^b (\omega) , r^t(\omega)).
\]
%
Then $r^b$ and $r^t$ are invariant.
 Hence $r^b>0$, $\nu$-almost everywhere, or $r^b=0$, $\nu$-almost everywhere, and likewise
 $r^t<1$, $\nu$-almost everywhere, or $r^t=1$, $\nu$-almost everywhere.
 We will derive a contradiction from the assumption that $r^t <1$ or $r^b >0$, $\nu$-almost everywhere.
Assume that e.g. $r^t <1$, $\nu$-almost everywhere.
Write
\begin{align}\label{e:ris}
r(\omega) &= \inf \{ x\in \mathbb{I} \; \mid \; \lim_{n\to \infty}  f^{-n}_\omega (x) =1 \}.
\end{align}
As $L(1) > 0$, we have $r (\omega) <1$ for $\nu$-almost all $\omega \in \Sigma_2$,
compare Lemma~\ref{l:bonmil}.
Since the graphs of $r^t$ and $r$ are invariant graphs and also $r^t <1$, we have $r \ge r^t > \xi$, $\nu$-almost everywhere. 

The measure $\mu = (\text{id},r)\nu$ on $\Sigma_2 \times \mathbb{I}$ with conditional measures $\delta_{r(\omega)}$ and marginal $\nu$
defines an invariant measure for $F$.
It follows from the expression \eqref{e:ris} that  $r(\omega)$ depends on the past $\omega^- = (\omega_n)_{-\infty}^{-1}$ only.
Consequently, $\mu$ is a product measure of the form $\nu^+ \times \vartheta$ on $\Sigma^+_2 \times (\Sigma^-_2 \times \mathbb{I})$.
With $\Pi$ the natural projection $\Sigma_2 \times \mathbb{I} \to \Sigma_2^+ \times \mathbb{I}$,
we find that the $F^+$-invariant measure
 $\Pi \mu$ is a product measure 
 $\Pi \mu = \nu^+ \times \hat{m}$ on $\Sigma_2^+ \times \mathbb{I}$. 
By Lemma~\ref{l:corres}, $\hat{m}$ is a stationary measure.
Since $r >\xi$, $\nu$-almost everywhere, Proposition~\ref{p:fapp} gives that $m \ne \hat{m}$.
Lemma~\ref{l:unique} however prohibits the existence of two different stationary measures with support in $(0,1)$.
The contradiction is derived, establishing that
$W^s (\omega) = (0,1)$ for $\nu$-almost all $\omega\in \Sigma_2$.
\end{proof}

 Under the conditions of Theorem~\ref{t:sync}, the proof of Theorem~\ref{t:kan} shows that 
 \[
 \lim_{n\to \infty} \left| f^n_{\sigma^{-n} \omega} (x) - \xi (\omega) \right| = 0
 \]
 for $\nu$-almost all $\omega \in \Sigma_2$ and any $x \in (0,1)$. This convergence is called pullback convergence. 
 The proof of Theorem~\ref{t:sync} shows that
 \[
  \lim_{n\to \infty} \left|  f^n_{\omega} (x) - \xi( \sigma^n \omega) \right| = 0
  \]
  for $\nu$-almost all $\omega \in \Sigma_2$ and any $x \in (0,1)$.
 This convergence is called forward convergence.
 So in this case both pullback and forward convergence to $\xi$ holds.
 In general however forward convergence is not a consequence of pullback convergence.
 The next section provides an example, involving a zero Lyapunov exponent, with pullback convergence but not forward convergence.
 See in particular Section~\ref{ss:pullback}.
 Section~\ref{s:riddled} contains a related example, related by going to the inverse skew product system, with
 forward convergence but not pullback convergence.
 We refer to \cite{kloras11} for more discussion on conditions for convergence in nonautonomous and skew product systems.
 
 We finish with some pointers to further literature.
 In \citep{ant84,klenal04,derklenav07,zmahom08}  synchronization results, similar to Theorem~\ref{t:sync}, for skew product systems with circle diffeomorphisms
 as fiber maps are treated without employing negativity of Lyapunov exponents. 
 Motivated by Lemma~\ref{l:hyp} for example, 
 one may wonder about other invariant measures than those with Bernoulli measure as marginal.
 Reference \cite{gorilyklenal05} considers, in this direction, the existence of nonhyperbolic measures for step skew product systems with circle fibers.

\section{On-off intermittency}\label{s:on-off}

Intermittency in a dynamical system stands for dynamics that exhibits alternating phases of different characteristics.
Typically, intermittent dynamics alternates time series close to equilibrium with bursts of global dynamics \cite{berpomvid86}.
In our context,  we say that a step skew product system ${F^+} \in \mathcal{S}$ 
displays intermittency if the following holds
for any sufficiently small neighborhood $U$ of $0$: 
\begin{enumerate}
 \item
 For all $x \in (0,1)$ and $\nu^+$-almost all $\omega \in \Sigma_2^+$:
\[
 \lim_{n\to \infty} \frac{1}{n} \left\vert   \{ 0\le i< n \; ; \; f^i_\omega (x) \in U   \} \right\vert = 1; 
\]
\item For all $x \in (0,1)$ and $\nu^+$-almost all $\omega \in \Sigma_2^+$,
$f^n_\omega (x) \not \in U$ for infinitely many $n$.
\end{enumerate}
Here, for a finite set $S$, we write $\left\vert S \right\vert$ for its cardinality.

This kind of intermittency that involves a weakly unstable invariant set, here $\Sigma_2^+ \times \{0\} \subset \Sigma_2^+ \times \mathbb{I}$,
has been called on-off intermittency \citep{plsptr93,heaplaham94}.
The occurrence of intermittency in iterated function systems
of logistic maps with zero Lyapunov exponent at the fixed point in $0$ is treated in \citep{atda00,atsc03}.
 See also  \cite{bonmil08} for a study of specific interval diffeomorphisms
over expanding circle maps.
\begin{figure}[htbp]

\begin{center}
 \includegraphics[height=5.2cm]{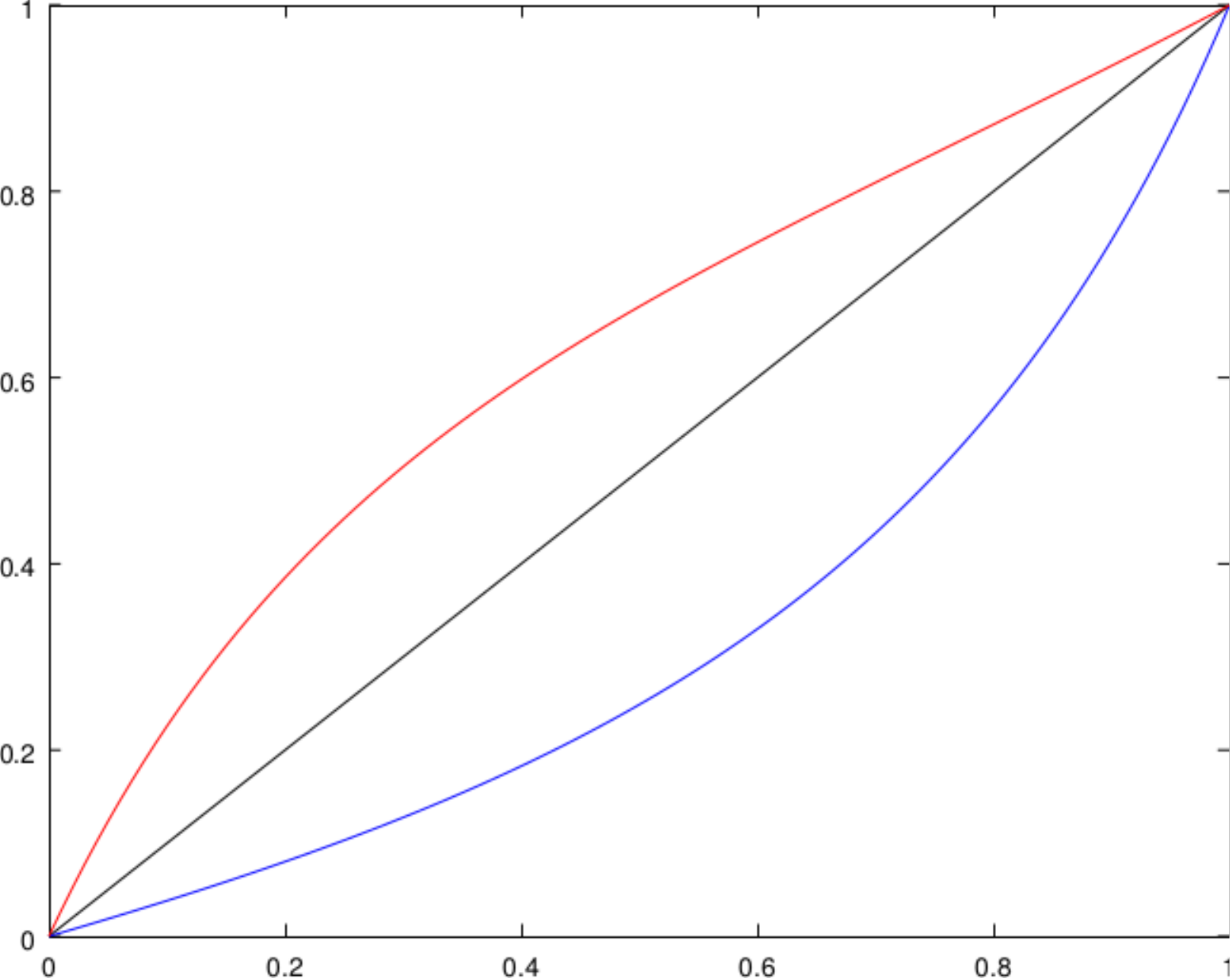}\hspace{0.4cm} 
 \includegraphics[height=5.2cm]{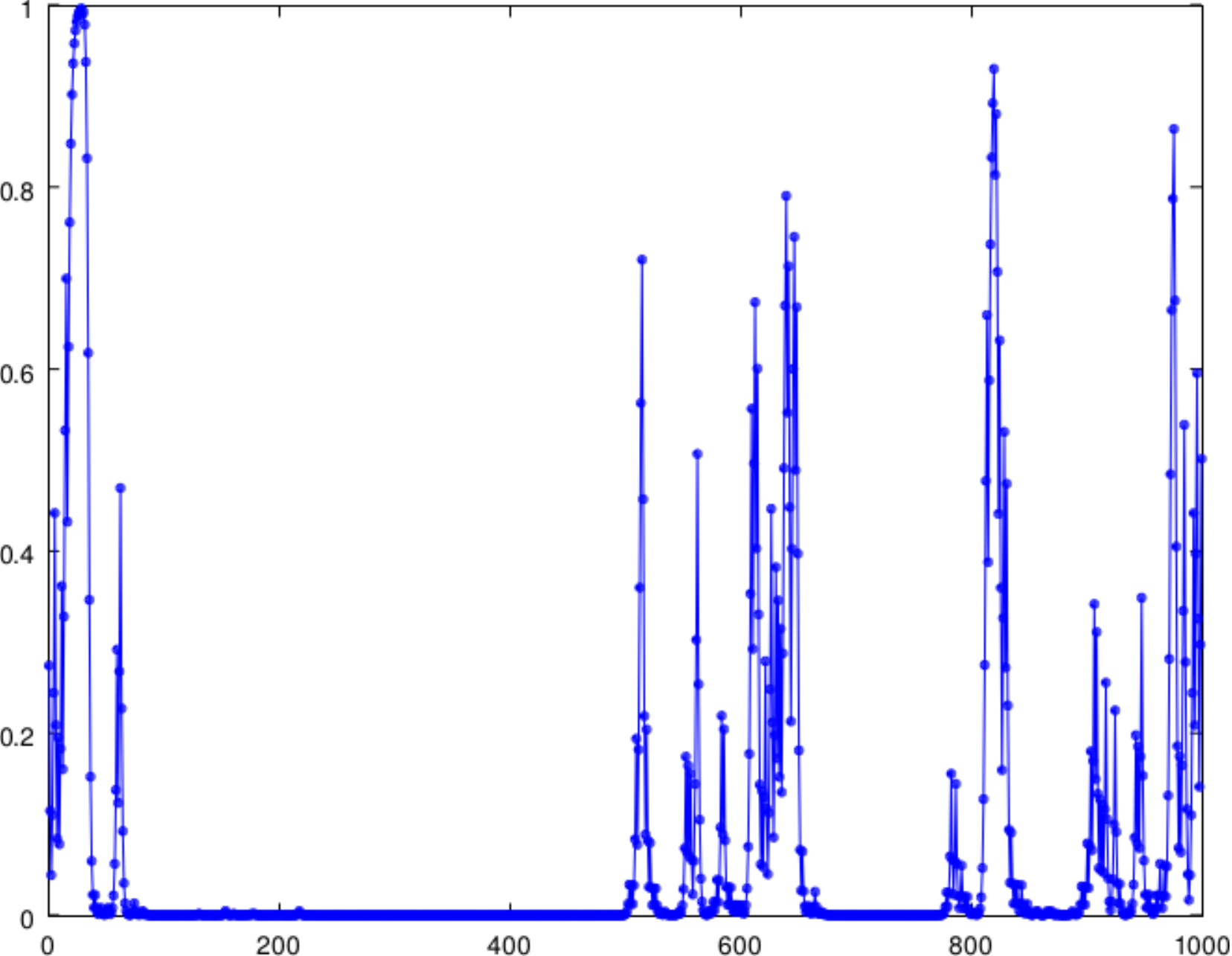} 
\caption{The left frame depicts the graphs of $x \mapsto f_i(x) = g_i(x) ( 1 - p(x))$, $i=1,2$, with $g_i (x)$ as in \eqref{e:g1}, \eqref{e:g2} and $p(x) = \frac{3}{10}  x (1-x)$. 
The corresponding step skew product system has a zero Lyapunov exponent along $\Sigma_2^+ \times \{0\}$ and a positive Lyapunov exponent along
$\Sigma_2^+ \times \{1\}$.
The right frame shows a time series for the iterated function system generated by these diffeomorphisms.
\label{f:onoff}}
\end{center}

\end{figure}

In this section we will discuss on-off intermittency for 
step skew product systems ${F^+} \in \mathcal{S}$.
Throughout we assume that both diffeomorphisms $f_1, f_2$ are picked with probability $1/2$. 
This is for convenience, we expect the more general case with probabilities $p_1, p_2$ to go along the same lines. 
The following two theorems, Theorems~\ref{t:beta} and \ref{t:delta0}, demonstrate that ${F^+} \in \mathcal{S}$ with $L(0)=0$ and $L(1)>0$ displays intermittency. 
Figure~\ref{f:onoff} illustrates a typical time series.

Lamperti, in a sequence of papers \citep{lam60,lam62,lam63}, developed a general theory of recurrence for nonhomogeneous
random walks on the half-line. 
His results may be used to prove on-off intermittency in our context, see in particular \cite[Theorems 3.1 and 4.1]{lam63}. 
We will get it by calculating bounds on stopping times, using $C^2$ differentiability of the generating diffeomorphisms.

 \begin{theorem}\label{t:beta}
 Let ${F^+} \in \mathcal{S}$ and assume $L(0) = 0$.
 Let $0< \beta$ be small and $x_0 \in (0,1)$.
  Then for $\nu^+$-almost every $\omega \in \Sigma_2^+$,
  $f^n_\omega (x_0)$ is in $[\beta , 1]$ for infinitely many values of $n$.  
\end{theorem}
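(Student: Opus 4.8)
The plan is to establish the local statement that the orbit is $\nu^+$-almost surely not eventually trapped in a small one-sided neighbourhood $(0,a)$ of the neutral boundary, where $a>0$ depends only on $f_1,f_2$; since $\beta$ is assumed small we may take $\beta<a$, and the orbit stays in $(0,1)$, so ``$f^n_\omega(x_0)\ge a$ for infinitely many $n$'' already yields the theorem. The engine is a Lyapunov (supermartingale) estimate near $0$, followed by a strong Markov iteration over the excursions of the orbit below level $a$.

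Writing $x_n=f^n_\omega(x_0)$ and $\rho_i=f_i'(0)$, one has $\rho_1\le1\le\rho_2$, and, since $p_1=p_2=\tfrac{1}{2}$, the hypothesis $L(0)=0$ is exactly $\rho_1\rho_2=1$. I would treat the genuinely neutral case $\rho_1<1<\rho_2$, so that $\tfrac{1}{2}(\rho_1+\rho_2)>1$ strictly by the arithmetic--geometric mean inequality; the borderline case $\rho_1=\rho_2=1$ needs a Lyapunov function built from $f_i''(0)$ and must be treated separately. Using $f_i(0)=0$ and $C^2$-smoothness to expand $f_i(x)=\rho_i x+\tfrac{1}{2}f_i''(0)x^2+o(x^2)$, a direct computation gives, for $V(x)=-\ln x+\gamma x$,
\[
\mathbb{E}_x\big[V(f_{\omega_0}(x))\big]-V(x)=\Big(\gamma\big(\tfrac{1}{2}(\rho_1+\rho_2)-1\big)-\tfrac{1}{4}\big(\rho_1 f_2''(0)+\rho_2 f_1''(0)\big)\Big)x+o(x)\qquad(x\to0^+),
\]
the decisive point being that the $O(1)$ term cancels precisely because $-\tfrac{1}{2}\ln(\rho_1\rho_2)=-L(0)=0$. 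Since the coefficient of $\gamma$ is positive, taking $\gamma$ sufficiently negative makes the bracket negative; for a correspondingly small $a>0$, the function $V$ is then positive and is a strictly decreasing homeomorphism of $(0,a]$ onto $[V(a),\infty)$, and $\mathbb{E}_x[V(f_{\omega_0}(x))]\le V(x)$ for all $x\in(0,a)$.

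Extending $V$ by $0$ on $[a,1]$ and stopping at $\tau=\inf\{n\ge0:x_n\ge a\}$, the process $V(x_{n\wedge\tau})$ is a nonnegative supermartingale — a short case check handles the one step in which $f_2(x_n)\ge a$, using that $-\ln a>-\ln\rho_1$ for $a$ small — hence it converges almost surely to a finite limit. On $\{\tau=\infty\}$ the orbit remains in $(0,a)$ and $V(x_n)$ converges, so, applying the continuous inverse of $V$, $x_n$ converges to some $\ell\in(0,a]$. But $f_1(x)/x<1<f_2(x)/x$ on $(0,1)$, and almost surely $\omega_n=1$ for infinitely many $n$, so along that subsequence $f_{\omega_n}(x_n)/x_n\to f_1(\ell)/\ell<1$, contradicting $x_{n+1}/x_n\to1$. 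Hence $\mathbb{P}_{x_0}(\tau<\infty)=1$ for every $x_0\in(0,1)$.

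To pass from one return to infinitely many, set $T_0=\tau$ and, inductively, $T_{k+1}=\inf\{n>T_k:x_n\ge a\}$: at time $T_k+1$ the orbit is at $f_{\omega_{T_k}}(x_{T_k})$; if this lies in $[a,1]$ then $T_{k+1}=T_k+1$, and otherwise the strong Markov property of the iterated function system (after the stopping time $T_k+1$ the driving symbols are again i.i.d.\ and independent of the past) combined with the previous paragraph gives $T_{k+1}<\infty$ almost surely. Thus all $T_k$ are finite, strictly increasing, with $x_{T_k}\ge a>\beta$, which proves the theorem. The step I expect to be the main obstacle is the Lyapunov estimate: one needs the remainder in $\ln(f_i(x)/x)=\ln\rho_i+O(x)$ to be of genuine order $x$ (this is where $C^2$ enters, and why the statement is not ``soft''), so that it can be absorbed into the drift $\gamma(\tfrac{1}{2}(\rho_1+\rho_2)-1)x$, and one must recognise the cancellation of the order-$1$ term as the content of $L(0)=0$; the borderline case $\rho_1=\rho_2=1$ is a genuine extra difficulty, handled by the same scheme in the coordinate $1/x$ and, in the fully degenerate situation $f_1''(0)+f_2''(0)=0$, by a finer random-walk recurrence estimate of the type provided by Lamperti's theory.
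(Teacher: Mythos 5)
Your argument is correct and reaches the theorem by a genuinely different route. The paper's proof (following Athreya--Schuh and Lamperti) passes to $u_n=-\ln x_n$, where the steps $d_{\omega_n}=-\ln\rho_{\omega_n}$ are centred by $L(0)=0$ and the remainder is exponentially small, and shows that $z_n=\ln(u_{n\wedge T})$ is a nonnegative supermartingale; the quantitative input is the Taylor estimate $h(u)u^2\to-\tfrac12\,\mathbb{E}[d_{\omega_0}^2]<0$, which relies on $\mathbb{E}[d_{\omega_0}^2]>0$, i.e.\ $\rho_1<1<\rho_2$. You instead stay on the interval with the Lyapunov function $V(x)=-\ln x+\gamma x$ and extract the needed order-$x$ drift from the nonlinearity of the $f_i$: the constant term dies by $L(0)=0$, the strict arithmetic--geometric excess $\tfrac12(\rho_1+\rho_2)-1>0$ produces a coefficient of definite sign, and a sufficiently negative $\gamma$ forces the drift negative near $0$. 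Both arguments then proceed identically: Doob's supermartingale convergence, impossibility of convergence to an interior point of $(0,a)$ since neither $f_1$ nor $f_2$ fixes it, and a strong Markov iteration to pass from one return to infinitely many. What each buys: the paper's double-logarithm is the classical Foster--Lamperti choice for a near-driftless walk on the half-line and gives recurrence to $[\beta,1]$ for every small $\beta$ in one stroke; your $V$ is more elementary, avoids the second change of variable, makes the dependence on $f_i''(0)$ explicit, but only yields a fixed threshold $a$ determined by the maps (which, as you correctly note, suffices since $\beta$ is assumed small). One further point in your favour: you flag that the borderline case $\rho_1=\rho_2=1$ escapes your estimate; it escapes the paper's for the very same reason (there $d_1=d_2=0$, so $\mathbb{E}[d_{\omega_0}^2]=0$ and the $\limsup$ step in the paper's Lemma~\ref{l:supermario} gives no information), so this is an implicit restriction the paper shares rather than a defect of your write-up.
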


\begin{proof}
We follow the proof of \cite[Theorem~1]{atsc03}.
Given $x_0 \in \mathbb{I}$ and $\omega \in \Sigma_2^+$, write \[x_n = x_n (\omega) = f^n_\omega (x_0).\]
It suffices to show that for $x_0 < \beta$,
\[
 \nu^+ ( \{ \omega \in \Sigma_2^+ \; \mid \; x_n \ge \beta  \text{ for some } n\ge 1 \}) =1.
\]
Let \[u_n  = - \ln (x_n).\]
We wish to show that for $u_0 > K = -\ln (\beta)$,
\[
 \nu^+ ( \{ \omega \in \Sigma_2^+ \; \mid \; u_n \le K  \text{ for some } n\ge 1 \}) =1.
\]

Write $f_i(x) = c_i x / (1 + t_i(x))$ with $t_i (x) = O(x)$ as $x \to 0$. 
Taking logarithms of $x_{n+1} = f_{\omega_n} (x_n)$ we get 
\[
u_{n+1}  = d_{\omega_n} + u_n + \ln ( 1 + t_{\omega_n}( e^{-u_n})  ),
\]
with $d_i = -\ln (c_i)$.
Consider the stopping time 
\[
T = \inf \{ n\ge 1 \mid u_n \le K  \}
\]
 (with $T = \infty$ if $u_n > K$ for all $n$)
and write \[z_n = \ln (u_{n \wedge T} ),\]
where $n \wedge T = \min \{ n,T\}$.
We claim that $z_n$ is a supermartingale;

\begin{lemma}\label{l:supermario}
\[
 \int_{C}  z_{n+1} (\omega) \, d\nu^+(\omega) \le \int_{C}  z_{n} (\omega) \, d\nu^+(\omega)
\]
for 
cylinders $C = C^{0,\ldots,n-1}_{\omega_0,\ldots,\omega_{n-1}}$.
\end{lemma}

\begin{proof}
On $C$, $z_n(\omega)$ is constant.
As further $c_{\omega_{n+1}}$ is independent of $c_{\omega_n}$, it suffices to consider $n=0$ and to prove
\[
 \int_{\Sigma_2^+} z_1 (\omega) \, d\nu^+ (\omega) \le \ln (u_0)
\]
for $u_0$ large enough.
Denote
\[
h(u_0) = \int_{\Sigma_2^+} z_{1}\, d\nu^+(\omega) - \ln (u_0).
\]
The zero Lyapunov exponent,  $L(0) = 0$, implies  
$\int_{\Sigma_2^+} d_{\omega_0} \, d\nu^+(\omega) = 0$.
Using this,
\begin{align*}
h(u) &=
 \int_{\Sigma_2^+} \ln \left(d_{\omega_0} + u + \ln(1 + t_{\omega_0}( e^{-u})) \right) - \ln (u) \, d\nu^+(\omega)
\\
&= \int_{\Sigma_2^+} \ln \left( \frac{d_{\omega_0} + u + \ln(1 + t_{\omega_0}( e^{-u}))}{u}\right) \, d\nu^+(\omega)
\\
&= \int_{\Sigma_2^+}  \ln \left( \frac{d_{\omega_0} + u + \ln(1 + t_{\omega_0}( e^{-u}))}{u}\right)  - \frac{d_{\omega_0}}{u} \, d\nu^+(\omega)
 \\
 &= 
 \int_{\Sigma_2^+} \ln \left(\frac{d_{\omega_0}}{u} + 1 + \frac{\ln(1 + t_{\omega_0}( e^{-u}))}{u} \right) - \frac{d_{\omega_0}}{u}  \, d\nu^+(\omega).
 \end{align*}
 By developing the integrand of the last expression in a Taylor expansion, this gives
 \begin{align*}
 h(u)
 &=
 -\frac{1}{2} \int_{\Sigma_2^+}  \left( \frac{d_{\omega_0}}{u} \right)^2 + o \left(\frac{1}{u^2}\right)   \, d\nu^+(\omega), \, u \to \infty. 
\end{align*}
So
\[
\limsup_{u\to\infty} h(u) u^2 < 0,   
\]
implying that there exists $\bar{u}$ so that $h(u)<0$  for $u \ge \bar{u}$.
\end{proof}

Now that Lemma~\ref{l:supermario} gives that 
$z_n$ is a nonnegative supermartingale, by Doob's supermartingale convergence theorem, see e.g. \cite[Section~VII.4]{shi84},
\begin{align}\label{e:b1=1} \lim_{n\to\infty} z_n(\omega) &< \infty \end{align}
for $\nu^+$-almost all $\omega \in \Sigma^+_2$. 
Let 
\begin{enumerate}
 \item $B_1 = \{ \omega\in\Sigma_2^+ \mid z_\infty = \lim\limits_{n\to \infty} z_n < \infty\}$;
 \item $B_2 = \{ \omega\in\Sigma_2^+ \mid T = \infty\}$.
\end{enumerate}
We must prove that $\nu^+ (B_2) = 0$.
On $B_1 \cap B_2$, $z_n \to z_\infty$ and thus $x_n \to x_\infty \in (0,1)$ as $n \to \infty$.  
This is impossible as both $f_1 (x_\infty) \ne x_\infty$ and $f_2(x_\infty) \ne x_\infty$.
So $B_1 \cap B_2 = \emptyset$.
By \eqref{e:b1=1}, $\nu^+ (B_1) = 1$. Hence $\nu^+ (B_2) = 0$. 
\end{proof}

If one assumes $L(1) >0$, then   
a similar, in fact simpler, argument shows that for $\beta$ small and $x_0 \in (0,1)$, for $\nu^+$-almost all $\omega$ one finds  
$x_n = f^n_\omega (x_0)$ in $[0 , 1-\beta]$ for infinitely many values of $n$.

\begin{theorem}\label{t:delta0}
Consider ${F^+} \in \mathcal{S}$ and assume $L(0) = 0$ and $L(1) >0$.
 Let $0< \beta < 1$ and $x_0 \in (0,1)$.
 Then for $\nu^+$-almost every $\omega \in \Sigma_2^+$,
 \begin{align} \label{e:d0}
  \lim_{n\to \infty} \frac{1}{n} \sum_{i=0}^{n-1} \mathbbm{1}_{[0,\beta)} (f^i_\omega (x_0)) &= 1.
 \end{align}
\end{theorem}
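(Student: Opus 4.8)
The plan is to show that the orbit $x_n = f^n_\omega(x_0)$ spends a fraction of time tending to $1$ in any neighborhood $[0,\beta)$ of the "off" state $0$. The combinatorial skeleton is a decomposition of time into alternating \emph{laminary} excursions (where $x_n < \beta$) and \emph{burst} excursions (where $x_n \ge \beta$). Theorem~\ref{t:beta} already guarantees that from deep inside $[0,\beta)$ the orbit almost surely returns to $[\beta,1]$, and the remark following it guarantees the symmetric fact that from $[\beta,1]$ the orbit almost surely comes back into $[0,1-\beta]$ and, iterating the argument at successively smaller scales, actually re-enters $[0,\beta)$; so both types of excursion have finite length almost surely. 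The point of \eqref{e:d0} is that the expected laminary excursion length is \emph{infinite} (a heavy tail coming from the zero Lyapunov exponent at $0$), while the expected burst length is \emph{finite} (because $L(1)>0$ gives a genuine drift away from the $1$-boundary, so the return to $[0,\beta)$ happens with exponentially controlled tails). A renewal/Cesàro argument then forces the time-average of $\mathbbm{1}_{[0,\beta)}$ to be $1$.

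Concretely, I would proceed as follows. First, fix $\beta$ small and introduce, as in the proof of Theorem~\ref{t:beta}, the coordinate $u_n = -\ln(x_n)$ and the recursion $u_{n+1} = d_{\omega_n} + u_n + \ln(1 + t_{\omega_n}(e^{-u_n}))$; near $0$ this is a mean-zero random walk in $u$ with a small curvature correction, and the supermartingale $z_n = \ln(u_{n\wedge T})$ from Lemma~\ref{l:supermario} is the key tool. I would use the supermartingale inequality, together with the Taylor estimate $h(u) = -\tfrac12 \mathbb{E}[(d_{\omega_0}/u)^2] + o(1/u^2)$ obtained there, to extract quantitative control: starting a laminary excursion at level $u_0 = K + O(1)$ with $K = -\ln\beta$, the exit time $T$ from $\{u > K\}$ satisfies $\mathbb{E}[T] = \infty$ (a Lamperti-type diffusion in $u$ with diffusion coefficient bounded below and negligible drift has null-recurrent, non-integrable return times to a level set). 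Dually, for a burst excursion starting just above $\beta$, I would run the simpler argument indicated after Theorem~\ref{t:beta}: since $L(1)>0$, in the coordinate $v_n = -\ln(1-x_n)$ near $1$ there is a uniform positive drift, so the stopping time $S$ to re-enter $[0,\beta)$ has $\mathbb{E}[S] < \infty$ and in fact an exponential tail. Then I would set up the alternating sequence of successive entry times into $[0,\beta)$ and $[\beta,1]$, note that by the Markov property with respect to the cylinder filtration the successive excursion lengths are, if not i.i.d., at least dominated by i.i.d. families with the above integrability properties, and apply the strong law of large numbers: writing $N_k$ for the time after $k$ complete laminary–burst cycles, the total burst time up to $N_k$ grows like $k \cdot \mathbb{E}[S] = O(k)$ while $N_k$ itself grows superlinearly in $k$ because $\mathbb{E}[T] = \infty$; hence the burst fraction goes to $0$ along the subsequence $N_k$, and an interpolation between consecutive $N_k$ (each cycle being finite a.s.) upgrades this to the full limit \eqref{e:d0}.

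The main obstacle I anticipate is the lower bound $\mathbb{E}[T] = \infty$ for the laminary excursion length, i.e. making rigorous the claim that the mean-zero random walk $u_n$ with the given curvature correction spends a non-integrable amount of time above the level $K$ before returning. The supermartingale convergence of $z_n$ only gives that $z_n$ converges a.s., which is exactly what was needed for Theorem~\ref{t:beta} (recurrence) but not yet a tail estimate on $T$; to get non-integrability one needs a matching \emph{submartingale} or a direct second-moment/Lamperti criterion controlling the walk from below, and here the sign and size of the $O(1/u^2)$ correction — which depends on $f_i''(0)$ through the $t_i$ — must be handled carefully to ensure it does not create a spurious inward drift strong enough to make returns integrable. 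This is precisely the place where $C^2$ regularity of $f_1,f_2$ enters, and where one could alternatively invoke \cite[Theorems 3.1 and 4.1]{lam63} as a black box; I would present the self-contained stopping-time computation, mirroring the Taylor expansion already carried out in Lemma~\ref{l:supermario} but now applied to $\mathbb{E}[u_{n+1}^2 \mid u_n]$ rather than $\mathbb{E}[\ln u_{n+1}\mid u_n]$, to show the walk is null-recurrent with non-integrable excursions. The remaining steps — the exponential tail of $S$ from $L(1)>0$, and the renewal/Cesàro wrap-up — are routine once the two integrability facts are in hand.
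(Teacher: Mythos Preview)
Your overall skeleton---decompose time into alternating laminary excursions in $[0,\beta)$ and burst excursions in $[\beta,1]$, show the former have infinite expected length and the latter finite expected length, then run a renewal/Ces\`aro argument---is exactly the paper's strategy. The final computation you sketch (ratio of partial sums of excursion lengths) is also what the paper does.

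Where you and the paper diverge is in the two key lemmas. For the infinite expected laminary length, you propose a Lamperti-style second-moment computation on $\mathbb{E}[u_{n+1}^2\mid u_n]$; the paper instead runs a two-scale comparison. After linearizing $f_1$ near $0$, the walk in $z_n=\ln(x_n)/\ln d$ is compared on a deep region $(-\infty,L-g]$ to a walk with drift $O(e^{-g\ln d})$, so Wald's identity gives expected return time $\gtrsim e^{g\ln d}$ from level $L-g$; separately, an exponential martingale $e^{r^* S_n}$ and optional stopping show the probability of ever reaching level $L-g$ from near $L$ is $\gtrsim e^{gr^*}$ with $r^*\to 0$. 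The product $e^{g(r^*+\ln d)}\to\infty$ as $g\to\infty$. Your Lamperti route would also work and is closer to the black-box invocation of \cite{lam63}; the paper's route is more self-contained but more intricate.

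For the finite expected burst length your sketch has a gap. The drift from $L(1)>0$ only operates near $x=1$; a burst excursion starting just above $\beta$ need not go near $1$ at all, and the remark after Theorem~\ref{t:beta} only gets you back into $[0,1-\beta]$, not into $[0,\beta)$. ``Iterating at successively smaller scales'' does not obviously sum to a finite expectation. The paper splits the burst region into $(\tilde K,\infty)$ in the $y$-coordinate (near $1$; Wald's identity with the genuine drift gives finite expected escape time) and the compact middle piece $[K,\tilde K]$, where the key observation is that $f_1$ moves points left by at least some $|l|>0$ on this compact set, so a run of $N=\lceil(\tilde K-K)/|l|\rceil$ consecutive symbols $\omega_i=1$ forces exit to the left; the expected waiting time for such a run is finite. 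You should add this middle-region argument to complete the burst estimate.
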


\begin{proof}
The reasoning is inspired by \citep[Theorem~4]{atsc03}.
Consider \[x_n = x_n(\omega) = f_\omega^n (x_0)\] and \[y_n = \ln (x_n / (1 - x_n)).\] 
We denote
\begin{align*} 
y_{n+1} &= h_{\omega_{n}} (y_n).
\end{align*} 
For $\beta$ small,  $K = \ln (\beta / (1-\beta))$ is a large negative number. 
For definiteness assume $x_0 \le \beta$, i.e. $y_0 \le K$.
Define stopping times $T_0 = 0$,
\begin{align*}
T_{2k+1} &= \inf \{ n \in \mathbb{N} \;  \mid  \; n > T_{2k} \text{ and } y_n > K\}, 
\\
T_{2k} &= \inf \{ n \in \mathbb{N} \; \mid \; n > T_{2k-1} \text{ and } y_n \le K\},
\end{align*}
see Figure~\ref{f:stop}.
Let 
\begin{align*}
\eta_k &= \left| [T_{2k-2}, T_{2k-1}) \right| = T_{2k-1} - T_{2k-2}, \\
\xi_k &= \left| [T_{2k-1}, T_{2k}) \right| = T_{2k} - T_{2k-1} 
\end{align*}
be the duration of subsequent iterates with $y_n \le K$ 
and the duration of subsequent iterates with $y_n > K$, 
respectively.
\begin{figure}[h]
\begin{picture}(0,0)%
\includegraphics{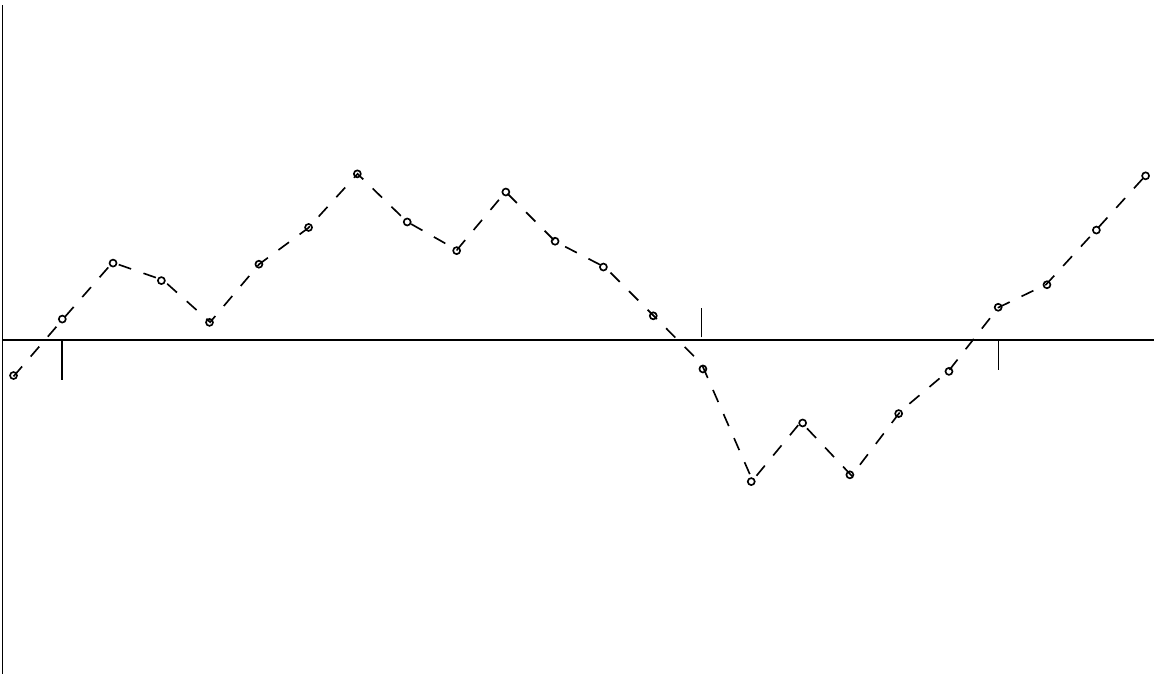}%
\end{picture}%
\setlength{\unitlength}{4144sp}%
\begingroup\makeatletter\ifx\SetFigFont\undefined%
\gdef\SetFigFont#1#2#3#4#5{%
  \reset@font\fontsize{#1}{#2pt}%
  \fontfamily{#3}\fontseries{#4}\fontshape{#5}%
  \selectfont}%
\fi\endgroup%
\begin{picture}(5359,3084)(393,-2683)
\put(496,254){\makebox(0,0)[lb]{\smash{{\SetFigFont{5}{6.0}{\familydefault}{\mddefault}{\updefault}{\color[rgb]{0,0,0}$y_n$}%
}}}}
\put(1905,-1268){\makebox(0,0)[lb]{\smash{{\SetFigFont{5}{6.0}{\familydefault}{\mddefault}{\updefault}{\color[rgb]{0,0,0}$\xi_{k}$}%
}}}}
\put(3941,-1244){\makebox(0,0)[lb]{\smash{{\SetFigFont{5}{6.0}{\familydefault}{\mddefault}{\updefault}{\color[rgb]{0,0,0}$\eta_{k+1}$}%
}}}}
\put(629,-1436){\makebox(0,0)[lb]{\smash{{\SetFigFont{5}{6.0}{\familydefault}{\mddefault}{\updefault}{\color[rgb]{0,0,0}$T_{2k-1}$}%
}}}}
\put(3501,-949){\makebox(0,0)[lb]{\smash{{\SetFigFont{5}{6.0}{\familydefault}{\mddefault}{\updefault}{\color[rgb]{0,0,0}$T_{2k}$}%
}}}}
\put(4836,-1381){\makebox(0,0)[lb]{\smash{{\SetFigFont{5}{6.0}{\familydefault}{\mddefault}{\updefault}{\color[rgb]{0,0,0}$T_{2k+1}$}%
}}}}
\put(5737,-1156){\makebox(0,0)[lb]{\smash{{\SetFigFont{5}{6.0}{\familydefault}{\mddefault}{\updefault}{\color[rgb]{0,0,0}$n$}%
}}}}
\end{picture}%
\caption[]{A sequence of stopping times is defined to label subsequent iterates where $y_n$ leaves $(-\infty,K]$ or $(K,\infty)$. \label{f:stop}}
\end{figure}

Lemmas~\ref{l:Tinfty} and \ref{l:Tfinite} 
determine bounds for the expectation of the stopping times 
$\eta_k$ (which is shown to be infinite) and
$\xi_k$ 
(which is shown to be finite).

\begin{lemma} \label{l:Tinfty}
\begin{align} \label{e:waldrho}
 \int_{\Sigma_2^+} \eta_k (\omega) \, d \nu^+ (\omega) = \infty;
\end{align}
 $\eta_k$ has infinite expectation.
\end{lemma}

\begin{proof}
A smooth conjugation brings $f_1$ near $0$ to the linear map $x \mapsto x/d$ for some $d>1$. More formally, there is a local diffeomorphism
$h: U \to \mathbb{R}$ with $U$ a neighborhood of $0$ and $h(0)=0$, so that $h^{-1} \circ f_1 \circ h (x) = x/d$. 
Replace $f_1$ by $h^{-1} \circ f_1 \circ h$ and likewise $f_2$ by $h^{-1} \circ f_2 \circ h$, near $0$.
That is, we may assume that on a neighborhood of $0$ that contains $[0,\beta]$,
\begin{align*}
f_1 (x) &= x/d,\\
f_2 (x) &= d x (1 + r(x)),
\end{align*} 
for some smooth function $r(x) = O(|x|)$, $x \to 0$ (observe that  $f_1' (0)  = 1/ f_2'(0)$ by $L(0)=0$).

Let $\tilde{z}_n = \ln (x_n)$ map $(0,\beta]$ to $(-\infty , \tilde{L}]$ with $\tilde{L} = \ln (\beta)$.
Then $x_{n+1} = f_{\omega_n} (x_n)$ becomes
\begin{align*}
\tilde{z}_{n+1} &= \left\{ \begin{array}{ll} \tilde{z}_n - \ln(d), & \text{ if } \omega_n=1, \\ \tilde{z}_n + \ln(d) + \ln (1 + r(e^{\tilde{z}_n})), & \text{ if } \omega_n=2. \end{array}\right.   
\end{align*}
An additional rescaling $z_n = \tilde{z}_n/\ln(d)$  conjugates this iterated function to
\begin{align}
\label{e:yn}
z_{n+1} &= \left\{ \begin{array}{ll} z_n - 1, & \text{ if } \omega_n=1, \\ z_n + 1 + \ln (1 + r(e^{z_n \ln(d)}))/\ln(d), & \text{ if } \omega_n=2. \end{array}\right.   
\end{align} 
Write $L = \tilde{L}/\ln(d)$; we consider $z_n$ on $(-\infty,L]$. 

Let $g>0$; $g$ will be chosen large in the sequel.
The term $\ln (1 + r(e^{z_n\ln(d)}))/\ln(d)$ may be bounded from above by $C e^{-g\ln(d)}$ on intervals $(-\infty , L - g]$, for some $C>0$.
On $(-\infty,L - g] \subset (-\infty,L]$ we compare the random walk $z_n$ with the random walk 
\begin{align*}
v_{n+1} &= \left\{ \begin{array}{ll} v_n - 1, & \text{ if } \omega_n=1, \\ v_n + 1 + C e^{-g \ln(d)}, & \text{ if } \omega_n=2. \end{array}\right.   
\end{align*}
Given $z_0 = v_0 \in [L-g-1, L - g)$, we define stopping times 
\begin{align*} 
T_z &= \min \{n \in \mathbb{N} \; \mid \; z_n \ge L - g\}, 
\\
T_v &= \min \{ n\in\mathbb{N}\; \mid \;v_n \ge L - g\}.
\end{align*}
If $z_0,\ldots,z_n \in (-\infty,L - g)$, then $z_i \le v_i$ for all $0\le i \le n+1$.
Therefore, for each $\omega \in \Sigma_2^+$, 
\[
T_z (\omega) \ge T_v (\omega).
\]
By Wald's identity, see e.g. \cite[Section~VII.2]{shi84},
\begin{align*}
\int_{\Sigma_2^+} T_v (\omega) \, d\nu^+ (\omega) &=  \frac{2}{ C e^{-g \ln (d)}} \int_{\Sigma_2^+} v_{T_v (\omega)} - v_0 \, d\nu^+ (\omega)
\\
&\ge c e^{g \ln(d)}
\end{align*}
for some $c>0$, and hence  
%
%
%
\begin{align}\label{e:Ty}
\int_{\Sigma_2^+}  T_z (\omega) \, d\nu^+ (\omega) &\ge c e^{g \ln(d)}.
\end{align}

Let $\alpha>0$ be so that 
\begin{align}\label{e:yalpha}
z_{n+1} &\le z_n + 1 + \alpha
\end{align} 
for $L - g \le z_n \le L$.
Note that we may take $\alpha$ to be small if $L$ is large.
Consider the random walk given by \eqref{e:yn} with initial point $z_0 \in (L-1,L]$.
Define the stopping time 
\begin{align*}
T_g &= \min\{ n >0 \;  \mid  \;  z_n < L-g \text { or } z_n > L \}.
\end{align*}

\begin{lemma}\label{l:AB}
For $\alpha>0$ small enough, there is  $c, r^* = r^*(\alpha)<0$, so that
 \[
 \nu^+ (\{ \omega \in \Sigma_2^+ \;  \mid  \; z_{T_g} < L - g \}) \ge c e^{g r^*}.
 \]
Here $r^*(\alpha) \to 0$ as $\alpha \to 0$.   
\end{lemma}

We finish the proof of Lemma~\ref{l:Tinfty} using this lemma, and then prove Lemma~\ref{l:AB}.
Consider the following reasoning.
Start with a point $z_{T_{2k}} \in (L-1,L]$.
Then some iterate of $z_{T_{2k}}$ will have left $[L - g,L]$, either through the right boundary point $L$ or, with  probability
determined by Lemma~\ref{l:AB}, 
through the left boundary point $L - g$. 
In the latter case there will be a return time to $[L-g,L]$ after which a further iterate may leave through the right boundary point $L$.
Consequently, combining \eqref{e:Ty} and Lemma~\ref{l:AB},
\begin{align}\label{e:compare}
\int_{\Sigma_2^+}  \eta_k (\omega) \, d\nu^+ (\omega) &\ge  c e^{g r^*}   e^{g \ln (d)}
\end{align}
for some $c>0$.
For $L$ sufficiently large, $\alpha$ is small enough to ensure
$ e^{r^*}   e^{\ln (d)} >1$, because $r^*(\alpha) \to 0$ as $\alpha \to 0$.
Then the right hand side of \eqref{e:compare} goes to infinity as $g \to \infty$.
This concludes the proof of Lemma~\ref{l:Tinfty}.
\end{proof}

\begin{proof}[Proof of Lemma~\ref{l:AB}]
Consider the random walk
\begin{align*}
u_{n+1} &= \left\{ \begin{array}{ll} u_n - 1, & \text{ if } \omega_n=1, \\ u_n + 1 + \alpha, & \text{ if } \omega_n=2, \end{array}\right.   
\end{align*}
with $ u_0 \in (-1,0]$.
Define the stopping time 
\begin{align*}
U_g &= \min\{ n >0 \;  \mid  \;  u_n < -g \text { or } u_n >0 \}.
\end{align*}
By \eqref{e:yalpha}
we have
 \begin{align*} 
 \nu^+ (\{ \omega \in \Sigma_2^+ \;  \mid  \; z_{T_g} < L-g \}) &\ge \nu^+ (\{ \omega \in \Sigma_2^+ \;  \mid  \; u_{U_g} < -g \})
 \end{align*}
and hence
 it suffices to prove the estimate
 \begin{align*} 
 \nu^+ (\{ \omega \in \Sigma_2^+ \;  \mid  \; u_{U_g} < -g \}) &\ge c e^{g r^*}.
 \end{align*}
 
Write $\zeta_n$ for the steps $u_{n} - u_{n-1}$; $\zeta_n = -1$ or $\zeta_n = 1+\alpha$ both with probability $1/2$.
Write 
 $S_n = \zeta_1 + \cdots + \zeta_n = u_n - u_0$ and consider the function 
\[
G_n = e^{r^* S_n},
\] 
where $r^* <0$ is the solution of
\[
\frac{1}{2}  e^{-r^*}  +  \frac{1}{2} e^{r^*(1+\alpha)} = 1.
\]
One can check that this equation has a unique solution $r^* <0$ with $r^* \to 0$ as $\alpha \to 0$.  
Now $G_n$ is a martingale as 
\begin{align*}
\int_{C^{0,\ldots,n-1}_{\omega_0,\ldots,\omega_{n-1}}}  & e^{r^* S_n}  \, d\nu^+ (\omega) = 
\int_{C^{0,\ldots,n-1}_{\omega_0,\ldots,\omega_{n-1}}}  e^{r^* S_{n-1}} e^{r^* \zeta_n}  \, d\nu^+ (\omega) 
\\
&= e^{r^* S_{n-1}}  \int_{C^{0,\ldots,n-1}_{\omega_0,\ldots,\omega_{n-1}}}  e^{r^* \zeta_n}  \, d\nu^+ (\omega) 
\\
&= e^{r^* S_{n-1}} \left(  \int_{C^{0,\ldots,n}_{\omega_0,\ldots,\omega_{n-1},1}}  e^{-r^*}  \, d\nu^+ (\omega) + 
\int_{C^{0,\ldots,n}_{\omega_0,\ldots,\omega_{n-1},2}}  e^{r^* (1 + \alpha)}  \, d\nu^+ (\omega) \right)
\\
&=  e^{r^* S_{n-1}}  \int_{C^{0,\ldots,n-1}_{\omega_0,\ldots,\omega_{n-1}}}  \frac{1}{2}  e^{-r^*}  +  \frac{1}{2} e^{r^*(1+\alpha)}\, d\nu^+ (\omega)
%
\\
&=  \int_{C^{0,\ldots,n-1}_{\omega_0,\ldots,\omega_{n-1}}}  e^{r^* S_{n-1}}  \, d\nu^+ (\omega). 
\end{align*}
By Doob's optional stopping theorem, 
see e.g. \cite[Theorem~VII.2.2]{shi84},
\begin{align*}
\int_{\Sigma_2^+} e^{r^* S_{U_g}} \, d\nu^+ (\omega) = \int_{\Sigma_2^+} e^{r^* S_{0}} \, d\nu^+ (\omega)  =  1.
\end{align*}

This gives
\begin{align*}
\int_{\Sigma_2^+} e^{r^* u_{U_g}} \, d\nu^+ (\omega)  =  e^{r^* u_0}.
\end{align*}
Observe  $u_{U_g} \in [-g-1,-g)$ 
or $u_{U_g} \in (0,1+\alpha]$. 
Let
\[A = 
\nu^+ (\{ \omega \in \Sigma_2^+ \;  \mid  \; u_{U_g} < -g \}) 
\]
be the probability that $u_{U_g} < -g$.
Write
\begin{align*}
\int_{\Sigma_2^+} e^{r^* u_{U_g}} \, d\nu^+ (\omega) &= 
 A  e^{-g r^*} e^{-c_1 r^*} +  (1-A)  e^{c_2 r^*},
\end{align*}
where
\begin{align*}
  e^{-g r^*} e^{-c_1 r^*} &= \frac{1}{A} \int_{ \{ \omega \in \Sigma_2^+ \; \mid \; u_{U_g} < -g\}} e^{r^* u_{U_g}} \, d\nu^+ (\omega),
\\
 e^{c_2 r^*} &= \frac{1}{1-A} \int_{ \{ \omega \in \Sigma_2^+ \; \mid \; u_{U_g} > 0\}}e^{r^* u_{U_g}} \, d\nu^+ (\omega).
\end{align*}
In these expressions,  $0\le c_1 \le 1$, $0 \le c_2 \le 1+\alpha$.
A moment of thought gives that $c_2 >0$ 
($c_2 =0$ can only occur if points leave $[-g,0]$ through the right boundary point $0$, but the initial point $u_0 \in (-1,0]$ is mapped with probability $1/2$ to a point in $(\alpha,1+\alpha]$).
%
We obtain
\begin{align*}
A  \left( e^{-g r^*} e^{-c_1 r^*} - e^{c_2 r^*} \right) &= e^{u_0 r^*} - e^{c_2 r^*},
\end{align*}
where this last number is positive.
\end{proof}

Similar arguments 
prove the following lemma.

\begin{lemma} \label{l:Tfinite}
\begin{align}\label{e:waldsigma}
 \int_{\Sigma_2^+}  \xi_k (\omega) \, d \nu^+ (\omega) < \infty;
\end{align}
$\xi_k$ has finite expectation.
\end{lemma}

\begin{proof}
Recall that $x_{n+1} = f_{\omega_n} (x_n)$ on $\mathbb{I}$ is conjugate to $y_{n+1} = h_{\omega_n} (y_n)$ on $\mathbb{R}$
through  $y_n = \ln (x_n / (1-x_n))$.
We split iterates of $y_n$ in $[K,\infty)$ into two sets, namely iterates in $[K,\tilde{K}]$ and iterates in $(\tilde{K} , \infty)$,
for some positive and large $\tilde{K}$.
 Near $x=1$, write $f_i (x) = 1- a_i (1-x) (1 + r_i(1-x))$ with $a_i >0$ and $r_i(u) = O(u)$, $u \to 0$.
 The positive Lyapunov condition $L(1) > 0$ means that  
$\ln(a_1) + \ln(a_2) > 0$.
 Calculate
 \begin{align*}
 y_{n+1} &= y_n - \ln (a_{\omega_n}) + 
 \\
 & \ln  ( 1 - (1 - x_n)) + \ln (1 + r_{\omega_n} (1-x_n)) + \ln (1 - a_{\omega_n} (1 - x_n)(1 + r_{\omega_n} (1-x_n))),
 \end{align*}
 where $1 - x_n = 1 / (1 + e^{y_n})$.
 From this expression it is easily seen for any $\varepsilon>0$ one can pick $\tilde{K}$ large,
 so that for $y_n> \tilde{K}$, \[y_{n+1} \le y_n - \ln (a_{\omega_n}) + \varepsilon.\] 
 Pick $\varepsilon$ small enough so that 
 $- \ln (a_1)  - \ln (a_2) + 2 \varepsilon < 0$.
 
 For $z_0 \in (\tilde{K},h_2 (\tilde{K})]$ and $z_{n+1} = h_{\omega_n} (z_n)$, let 
 \[T_{\tilde{K}} = \min\{ n \in \mathbb{N} \; \mid \; z_n \le \tilde{K}\}\]
 be the stopping time to leave  $(\tilde{K},\infty)$.
 As in the proof of Lemma~\ref{l:Tinfty} one shows that the expectation of  $T_{\tilde{K}}$  is finite.
 To provide the argument, consider the random walk
 \[u_{n+1} = u_n - \ln (a_{\omega_n}) + \varepsilon\] 
 starting at $u_0 = z_0$ and let 
 \[T_u = \min \{n \in \mathbb{N} \; \mid \; u_n \le \tilde{K}\}.\]
 Then $T_{\tilde{K}} \le T_u$. By Wald's identity, $\int_{\Sigma_2^+} T_u (\omega) \, d\nu^+ (\omega) < \infty$ and hence
  \begin{align}
  \label{e:stopy}
  \int_{\Sigma_2^+} T_{\tilde{K}} (\omega) \, d\nu^+ (\omega) < \infty.
 \end{align}

After these preparations we define the first return map $g_\omega : (-\infty , \tilde{K}] \to (-\infty, \tilde{K}]$,
 %
 \[ 
 g_\omega (y) = h^{R(\omega,y)}_\omega (y),
 \]
 where \[R (\omega,y)  = \min \{ n \ge 1 \; \mid \; h^n_\omega (y) \le  \tilde{K} \}.\]
 By \eqref{e:stopy}, $R$ has finite expectation. In fact, there is $C>0$ so that for each $y \in [K,\tilde{K}]$,
   \begin{align}\label{e:stopR}
   \int_{\Sigma_2^+}  R(\omega,y) \, d\nu^+ (\omega) \le C.
   \end{align}
   
 The next step is to show that
 \[
 T_K = \min \{ n\in \mathbb{N} \; \mid \; g^n_\omega (y) < K \}
 \] 
 for 
 $y \in [K,\tilde{K}]$ has finite expectation.
 Consider the 
 skew product $G: \Sigma_2^+ \times (-\infty,\tilde{K}] \to \Sigma_2^+ \times (-\infty,\tilde{K}]$,
 \[
 G (\omega,y) = ( \sigma^{R(\omega,y)}  \omega , h^{R(\omega,y)}_\omega (y) ) =  ( \sigma^{R(\omega,y)}  \omega , g_\omega (y) ).
 \]
 Let $\pi$ be the projection $\pi (\omega,y) = \omega_0$ from $\Sigma_2^+ \times \mathbb{R}$ onto $\{1,2\}$.
 Given $(\omega,y) \in \Sigma_2^+ \times [K,\tilde{K}]$ we obtain a sequence $\rho \in \Sigma_2^+$ given by 
 \[\rho_i = \pi  G^i (\omega,y).\]
  It follows from the construction that as the sequence $(\omega_i)_0^\infty$ is independent and identically distributed, also
 $(\rho_i)_0^\infty$ is independent and identically distributed with the same distribution:
 probability $1/2$ for both symbols $1,2$.
Because $f_1 (x) < x$, we find $h_1 (y) < y$ and thus that there is a number $l < 0$ with
  \begin{align*}
  h_1 (y) < y + l,
  \end{align*}
  for $y \in [K, \tilde{K}]$.
  Hence, for any $y \in [K,\tilde{K}]$ and $N =\ceil{  (K - \tilde{K}) /l}$ we will have $g_1^N (y) = h_1^N (y) < K$.  
  The stopping time $T_K$ is therefore smaller than the stopping time
  \[
    \min \{ n \in \mathbb{N} \; \mid \;  \rho_i = 1 \text{ for } n-N < i \le n \}.
    \]
  Note that the expected number of throws of symbols $1,2$ that lead to $N$ consecutive $1$'s is finite.
  (In fact it equals $2^{N+1} -2$. It is easily bounded by $N$ times the expectation of the first number $j$ so that $\omega_i = 1$ for $j N \le i < j (N+1)$;
  the latter is a geometric distribution with expectation $2^N$). 
 So
  the expectation of the stopping time $T_K$ is finite;
  \begin{align}\label{e:stopK}
  \int_{\Sigma_2^+} T_{K}(\omega) \, d\nu^+(\omega) < \infty.
  \end{align}
   
 Finally we combine \eqref{e:stopR} and \eqref{e:stopK}: the formula 
\begin{align*}
 \xi_k (\omega) = \sum_{n=0}^{T_K (\omega)-1}  R (G^n (\omega, y_{T_{2k-1}}))
\end{align*} 
implies that 
 \begin{align*}
  \int_{\Sigma_2^+} \xi_k (\omega) \, d\nu^+ (\omega)  &\le C \int_{\Sigma_2^+} T_K (\omega) \,d\nu^+(\omega) 
    \\
     &< \infty.
  \end{align*}
 This proves Lemma~\ref{l:Tfinite}.
\end{proof}

We can now finish the proof of Theorem~\ref{t:delta0}.
Define for $n \in [T_{2k},T_{2k+1})$,
\begin{align*}
 N_\eta (n) = k, &\;
 N_\xi (n) = k
\end{align*}
and
\begin{align*}
 \tilde{\eta} (n) =  n +1 - T_{2k}, &\; 
 \tilde{\xi} (n) = 0,
\end{align*}
so that $\tilde{\eta}$ counts the number of iterates from $T_{2k}$ on where $y_n \le K$.
Likewise define for $n \in [T_{2k+1},T_{2k+2})$,
\begin{align*}
 N_\eta (n) = k+1, &\;
 N_\xi (n) = k
\end{align*}
and
\begin{align*}
 \tilde{\eta}(n) = 0, &\;
 \tilde{\xi} (n) =  n +1 - T_{2k+1}. 
\end{align*}
So $\tilde{\xi}$ counts the number of iterates from $T_{2k+1}$ on where $y_n > K$.

Finally calculate 
\begin{align*}
& \frac{1}{n} \sum_{i=0}^{n-1}   \mathbbm{1}_{[0,\beta)} (f^i_\omega (x_0)) = 
            \frac{1}{n} \left( \sum_{k=1}^{N_\eta (n-1)} \eta_k + \tilde{\eta} (n-1) \right) 
\\
 &= \left( \sum_{k=1}^{N_\eta(n-1)} \eta_k + \tilde{\eta}(n-1) \right) \left/ \left( \sum_{k=1}^{N_\eta(n-1)} \eta_k + \tilde{\eta}(n-1) + \sum_{k=1}^{N_\xi (n-1)} \xi_k + \tilde{\xi} (n-1) \right) \right. 
  \\
  &= \left(  1 + \left( \sum_{k=1}^{N_\xi(n-1)} \xi_k + \tilde{\xi} (n-1)   \right) \left/ \left( \sum_{k=1}^{N_\eta(n-1)} \eta_k + \tilde{\eta}(n-1) \right)\right.    \right)^{-1}
 \\
&\ge \left( 1 + \left( \sum_{k=1}^{N_\xi (n-1) + 1} \xi_k \right) \left/ \left( \sum_{k=1}^{N_\eta(n-1)} \eta_k \right)\right.\right)^{-1}.
\end{align*}
By \eqref{e:waldrho} and \eqref{e:waldsigma},
the last term goes to $1$ for $\nu^+$-almost all $\omega$, as $n \to \infty$ (note that $N_\eta (n-1) - N_\xi(n-1) \le 1$). 
 \end{proof}

The next theorem  is an immediate consequence of Theorem~\ref{t:delta0}.

\begin{theorem}\label{t:stat01}
Let ${F^+} \in \mathcal{S}$ and assume $L(0)\le 0$ and $L(1) > 0$.
Then the only ergodic stationary measures are the delta measures at $0$ and $1$.
\end{theorem}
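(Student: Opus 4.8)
The plan is to deduce the statement from the density estimate \eqref{e:d0} of Theorem~\ref{t:delta0} together with Birkhoff's ergodic theorem. First I would record that $\delta_0$ and $\delta_1$ are ergodic stationary measures: they are stationary because $0$ and $1$ are common fixed points of $f_1,f_2$, and the associated $F^+$-invariant measures $\nu^+\times\delta_0$, $\nu^+\times\delta_1$ are ergodic since $\nu^+$ is Bernoulli. For the converse, let $m$ be an ergodic stationary measure with $m\neq\delta_0,\delta_1$, and consider the $F^+$-invariant measure $\nu^+\times m$ (see Appendix~\ref{s:invmeasures}), which is ergodic because $m$ is. The sets $\Sigma_2^+\times\{0\}$ and $\Sigma_2^+\times\{1\}$ are $F^+$-invariant, so ergodicity forces $m(\{0\})=(\nu^+\times m)(\Sigma_2^+\times\{0\})\in\{0,1\}$ and similarly $m(\{1\})\in\{0,1\}$; as $m$ is neither $\delta_0$ nor $\delta_1$ both must vanish, whence $m\big((0,1)\big)=1$ and $(\nu^+\times m)\big(\Sigma_2^+\times(0,1)\big)=1$.

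The core of the argument is a comparison of two evaluations of the same Birkhoff average. Fix a small $\beta\in(0,1)$. On one hand, Theorem~\ref{t:delta0} gives $\frac1n\sum_{i=0}^{n-1}\mathbbm{1}_{[0,\beta)}(f^i_\omega(x))\to 1$ for every $x\in(0,1)$ and $\nu^+$-almost every $\omega$; since $\nu^+\times m$ has $\Sigma_2^+$-marginal $\nu^+$ and is concentrated on $\Sigma_2^+\times(0,1)$, this limit holds for $(\nu^+\times m)$-almost every $(\omega,x)$. On the other hand, Birkhoff's ergodic theorem applied to $(F^+,\nu^+\times m)$ and the indicator of $\Sigma_2^+\times[0,\beta)$ gives, for $(\nu^+\times m)$-almost every $(\omega,x)$, that this average converges to $(\nu^+\times m)(\Sigma_2^+\times[0,\beta))=m([0,\beta))$. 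Hence $m([0,\beta))=1$ for every $\beta\in(0,1)$; letting $\beta\downarrow0$ yields $m(\{0\})=1$, contradicting $m\big((0,1)\big)=1$. This settles the case $L(0)=0$.

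The step needing attention is $L(0)<0$, since Theorem~\ref{t:delta0} is stated only for $L(0)=0$. I would argue that \eqref{e:d0} persists for $L(0)\le0$ by inspecting its proof: near $0$ the linearised walk $z_n$ of Lemma~\ref{l:Tinfty} acquires a strictly negative drift, so the excursions $\eta_k$ out of $[0,\beta)$ still have infinite expectation --- indeed an orbit may now remain in $[0,\beta)$ forever with positive probability, in which case \eqref{e:d0} is immediate --- while Lemma~\ref{l:Tfinite}, controlling the excursions $\xi_k$ near $1$, uses only $L(1)>0$ and is untouched; on the complementary event that the orbit leaves $[0,\beta)$ infinitely often, the concluding ratio estimate of Theorem~\ref{t:delta0} applies verbatim. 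The only real obstacle is the bookkeeping in this extension (some stopping times becoming infinite): the clean route is to phrase the target as ``$\{\,i:f^i_\omega(x)\in[0,\beta)\,\}$ has asymptotic density $1$'', split off the harmless event of eventual trapping in $[0,\beta)$, and run the density-of-excursions argument on the rest. With \eqref{e:d0} available for $L(0)\le0$, the comparison above goes through unchanged.
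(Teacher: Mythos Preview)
Your argument is correct and matches the paper's own proof essentially line for line: assume an ergodic stationary measure $m$ supported on $(0,1)$, invoke Birkhoff for the ergodic system $(F^+,\nu^+\times m)$, and contradict the density estimate \eqref{e:d0} from Theorem~\ref{t:delta0}. The paper likewise restricts to $L(0)=0$ with the remark ``We will only treat the case $L(0)=0$ and $L(1)>0$''; your sketch for $L(0)<0$ (negative drift makes $\eta_k$ possibly infinite, Lemma~\ref{l:Tfinite} is unaffected, split into trapped and non-trapped events) is a sound way to fill the gap the paper leaves implicit.
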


\begin{proof}
We will only treat the case $L(0)=0$ and $L(1) >0$.
Suppose there  is an ergodic stationary measure $m$ with support in $(0,1)$. 
By Lemma~\ref{l:corres}, $\nu^+ \times m$ is an ergodic invariant measure for $F^+$.
 By Birkhoff's ergodic theorem, for $\nu^+\times m$-almost every $(\omega,x)$, we have
 \begin{equation}\label{e:distrre}
 \lim_{n\to\infty} \frac{1}{n} \sum_{i=0}^{n-1} \delta_{f^i_{\omega} (x)} = m.
 \end{equation}
 By Fubini's theorem, there is a subset of $\mathbb{I}$ of full $m$-measure, so that 
 in any $\Sigma_2^+ \times \{ x\}$ with $x$ from this subset,
 there is a set of full $\nu^+$-measure for which \eqref{e:distrre} holds.
 This however contradicts \eqref{e:d0}, since that holds for all $\beta >0$ and applies to all $x\in \mathbb{I}$.
 %
\end{proof}

The type of reasoning to prove Theorem~\ref{t:delta0} can be used to obtain the following result 
on iterated functions systems with zero Lyapunov exponents at both end points.  

\begin{theorem}\label{t:delta01}
Consider ${F^+} \in \mathcal{S}$ and assume $L(0) = L(1) = 0$.
 Let $0< \beta < 1$ and $x_0 \in (0,1)$.
 Then for $\nu^+$-almost every $\omega \in \Sigma_2^+$,
 \begin{align*}
  \lim_{n\to \infty} \frac{1}{n} \sum_{i=0}^{n-1} \mathbbm{1}_{[\beta,1-\beta]} (f^i_\omega (x_0)) &= 0.
 \end{align*}
\end{theorem}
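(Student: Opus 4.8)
The plan is to reuse the mechanism of the proof of Theorem~\ref{t:delta0}, now with the \emph{two} neutral boundaries $\{0\}$ and $\{1\}$ playing the role of the single neutral boundary there, and with the compact ``middle'' region $[\beta,1-\beta]$ playing the role of the repelling boundary there. The underlying picture is: near each of $0$ and $1$ the orbit undergoes a laminar phase whose duration has \emph{infinite} expectation (exactly Lemma~\ref{l:Tinfty}, whose proof uses only a vanishing Lyapunov exponent at the relevant endpoint), while it is expelled from $[\beta,1-\beta]$ within a number of steps of \emph{finite} expectation, simply because $f_1$ pushes points out of any compact subinterval of $(0,1)$ at a definite rate. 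Consequently the fraction of time spent in $[\beta,1-\beta]$ tends to $0$.

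First I would note that it suffices to treat arbitrarily small $\beta>0$, because $[\beta,1-\beta]\subset[\beta',1-\beta']$ for $0<\beta'<\beta$, so $\mathbbm{1}_{[\beta,1-\beta]}\le\mathbbm{1}_{[\beta',1-\beta']}$ and the statement for $\beta'$ gives it for $\beta$. Fix a small $\beta$, pass to $y=\ln(x/(1-x))$, and split $\mathbb{R}$ into $\mathcal{O}_0=(-\infty,K_0)$, $\mathcal{M}=[K_0,K_1]$ and $\mathcal{O}_1=(K_1,\infty)$ with $K_0=\ln(\beta/(1-\beta))$, $K_1=-K_0$. For $\beta$ small, $f_2(x)<1-\beta$ whenever $x<\beta$ and $f_1(x)>\beta$ whenever $x>1-\beta$, so a single iterate never passes directly between $\mathcal{O}_0$ and $\mathcal{O}_1$. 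Decompose the orbit $x_n=f^n_\omega(x_0)$ into maximal blocks lying in $\mathcal{M}$ (``middle blocks'') and maximal blocks lying in $\mathcal{O}_0\cup\mathcal{O}_1$ (``laminar blocks''); each laminar block then lies wholly in $\mathcal{O}_0$ or wholly in $\mathcal{O}_1$ and is entered from $\mathcal{M}$ by a single step of $f_1$ or of $f_2$, so its starting point lies in a fixed compact subset of $(0,1)$. Using Theorem~\ref{t:beta} at the endpoint $0$, together with its mirror statement at $1$ (valid since $L(1)=0$ and the whole setting is invariant under $x\mapsto 1-x$, which just interchanges $f_1$ and $f_2$, because $p_1=p_2=1/2$), the orbit visits $[\beta,1]$ and $[0,1-\beta]$ infinitely often; with the no-jump property this forces infinitely many visits to $\mathcal{M}$, so the decomposition is well defined with all stopping times finite for $\nu^+$-almost every $\omega$, the block types alternating.

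Next I would bound the block lengths. For middle blocks, put $c_\beta=\min_{x\in[\beta,1-\beta]}(x-f_1(x))>0$ and $N_\beta=\lceil(1-2\beta)/c_\beta\rceil+1$; then $N_\beta$ successive applications of $f_1$ carry every point of $[\beta,1-\beta]$ out of $[\beta,1-\beta]$, whence the length of a middle block is stochastically dominated, uniformly in its starting point, by the first time the i.i.d.\ symbol sequence shows $N_\beta$ consecutive $1$'s, of expectation $2^{N_\beta+1}-2<\infty$. For a laminar block in $\mathcal{O}_0$, starting from a point of the fixed compact set above, its length is the first return time to $\mathcal{M}$ of the conjugated orbit, which has infinite expectation by Lemma~\ref{l:Tinfty} (its proof works from an arbitrary starting point near the endpoint, using only $L(0)=0$). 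By the $x\mapsto1-x$ symmetry and $L(1)=0$, laminar blocks in $\mathcal{O}_1$ also have infinite expected length.

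Finally I would conclude as in the closing lines of the proof of Theorem~\ref{t:delta0}. Up to a negligible contribution from the block containing iterate $n$, the quantity $\frac1n\sum_{i=0}^{n-1}\mathbbm{1}_{[\beta,1-\beta]}(x_i)$ is the total length of the completed middle blocks divided by $n$. After $J$ blocks the accumulated middle-block length is $O(J)$ almost surely, by the stochastic domination above and the law of large numbers; the accumulated laminar-block length, on the other hand, is superlinear in $J$: truncating each laminar length at a level $R$ and applying a renewal argument for the first-return map to $\mathcal{M}$ yields a lower bound of order $\mathbb{E}[\min(\ell,R)]$ times the number of laminar blocks, and $\mathbb{E}[\min(\ell,R)]\to\infty$ as $R\to\infty$. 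Since the numbers of middle and of laminar blocks up to any time differ by at most $1$, the ratio tends to $0$, which is the assertion. The step I expect to be the main obstacle is precisely this last one: the successive laminar lengths are not independent --- their law depends on the entrance point and on whether the block sits near $0$ or near $1$ --- so the law-of-large-numbers/ratio argument must be carried out through the first-return map to $\mathcal{M}$ and its invariant measure, which is the point handled only briefly at the end of the proof of Theorem~\ref{t:delta0}.
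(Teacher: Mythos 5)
Your proposal is correct and is precisely the adaptation the paper intends: the paper does not give a separate proof of Theorem~\ref{t:delta01} but merely remarks that the reasoning of Theorem~\ref{t:delta0} applies, and your decomposition into middle blocks (finite expected length, by the $N_\beta$-consecutive-$1$'s argument, which here is even simpler than Lemma~\ref{l:Tfinite} since the middle region $[\beta,1-\beta]$ is compact) and laminar blocks near $0$ or $1$ (infinite expected length, by Lemma~\ref{l:Tinfty} applied at each neutral endpoint using the $1\leftrightarrow 2$ symmetry of the Bernoulli measure under $x\mapsto1-x$) is exactly that adaptation. The dependence-of-block-lengths issue you flag at the end is real but is present equally in the paper's own closing step of Theorem~\ref{t:delta0}, where it is handled just as tersely: the relevant point, which you essentially supply, is that the stopping-time bounds of Lemmas~\ref{l:Tinfty} and~\ref{l:Tfinite} are uniform over the bounded set of possible entrance points, so a conditional comparison with i.i.d.\ variables of finite (resp.\ infinite) mean drives the ratio to zero.
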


Figure~\ref{f:walk} illustrates a time series of the symmetric random walk, to which this theorem applies.

\subsection{Pullback convergence}\label{ss:pullback}

Theorem~\ref{t:beta} implies that
forward convergence of $f^n_\omega(x)$ to $0$ does not hold: it is not true that
for $\nu^+$-almost all $\omega \in \Sigma_2^+$,  
$f^n_{\omega}  (x)  \to 0$ as $n\to \infty$.
The next result stipulates that pullback convergence to $0$ does hold.
See also \cite[Section~9.3.4]{arn98} for a related example
where pullback convergence does not imply forward convergence,
in a context of stochastic differential equations. 

\begin{theorem}\label{t:pullback}
Let $F^+ \in \mathcal{S}$ and suppose $L(0) =0$ and $L(1) >0$.
Take $x \in (0,1)$.
Then for  $\nu$-almost all $\omega \in \Sigma_2$, 
\begin{align}\label{e:payback}
\lim_{n\to\infty} f^n_{\sigma^{-n} \omega}  (x)  &= 0.
\end{align}
\end{theorem}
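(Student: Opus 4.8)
The plan is to pass to the inverse (backward) iteration and prove the stronger statement that, for every fixed $t\in(0,1)$, one has $f^{-n}_\omega(t)\to 1$ for $\nu$-almost all $\omega$. This yields the theorem: since $f^{-n}_\omega=(f^n_{\sigma^{-n}\omega})^{-1}$ is increasing, $f^n_{\sigma^{-n}\omega}(x)<t$ is equivalent to $x<f^{-n}_\omega(t)$, so intersecting the full-measure sets obtained for $t=1/k$, $k\in\mathbb{N}$, gives $f^n_{\sigma^{-n}\omega}(x)\to 0$ for the given $x$ (in fact for all $x\in(0,1)$ simultaneously).

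First I would record the structure. For fixed $t$, $f^{-n}_\omega(t)=f^{-1}_{\omega_{-n}}\circ\cdots\circ f^{-1}_{\omega_{-1}}(t)$ is the forward orbit of $t$ under the iterated function system generated by $f_1^{-1},f_2^{-1}$, driven by the i.i.d.\ sequence $(\omega_{-1},\omega_{-2},\ldots)$ of law $\nu^+$. After interchanging the symbol names this inverse system again lies in $\mathcal{S}$ (with $p_1=p_2=\tfrac12$ as throughout this section), and its Lyapunov exponents are $-L(0)=0$ at $0$ and $-L(1)<0$ at $1$, so $0$ is neutral and $1$ attracting for it. Consequently, as already noted in \eqref{e:ris} via Lemma~\ref{l:bonmil} applied at the endpoint $1$, there is a measurable $r$ with $r(\omega)<1$ for $\nu$-almost all $\omega$ such that $f^{-n}_\omega(t)\to 1$ whenever $t>r(\omega)$; and by Theorem~\ref{t:beta} applied to this inverse system (which has zero exponent at $0$), for a small fixed $\beta>0$ one has $f^{-n}_\omega(t)\ge\beta$ for infinitely many $n$, $\nu$-almost surely.

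The heart of the argument is a martingale built from the capture probability. Set $\psi(t)=\nu(\{\omega:\ f^{-n}_\omega(t)\to 1\})$. Monotonicity of $f^{-n}_\omega$ makes $\psi$ non-decreasing; conditioning on $\omega_{-1}$ and using the i.i.d.\ structure gives the harmonic identity $\psi(t)=\tfrac12\psi(f_1^{-1}(t))+\tfrac12\psi(f_2^{-1}(t))$; and the existence of $r$ gives $\psi(t)\to1$ as $t\to1$ (since $\psi(t)\ge\nu(\{r<t\})$). Iterating the identity along $f_1^{-1}$, which satisfies $f_1^{-1}(s)>s$ and $(f_1^{-1})^k(s)\to1$, then forces $\psi(t)>0$ for every $t\in(0,1)$. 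Now $Y_n:=\psi(f^{-n}_\omega(t))$ is a bounded martingale for the filtration generated by $\omega_{-1},\ldots,\omega_{-n}$, so $Y_n\to Y_\infty$ almost surely with $\mathbb{E}Y_\infty=\psi(t)$. On the event $A=\{f^{-n}_\omega(t)\to1\}$ one has $Y_n\to1$, hence $\mathbb{E}[Y_\infty;A^c]=\psi(t)-\psi(t)=0$ and therefore $Y_\infty=0$ almost surely on $A^c$. On the other hand, the recurrence statement of the previous paragraph forces $Y_n\ge\psi(\beta)$ for infinitely many $n$, whence $Y_\infty\ge\psi(\beta)>0$ almost surely. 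These two facts are compatible only if $\nu(A^c)=0$, which is exactly the claim.

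I expect the most delicate points to be, first, verifying carefully that $\psi$ is strictly positive on all of $(0,1)$ and that $\psi(s)\to1$ as $s\to1$, so that $Y_n\to1$ holds on the convergence event; and second, the legitimacy of invoking Theorem~\ref{t:beta} for the inverse iterated function system — one must check that $f_1^{-1},f_2^{-1}$ (with the two symbols renamed) satisfy the hypotheses defining $\mathcal{S}$, that the standing assumption of equal probabilities is preserved, and that $-L(0)=0$, so that the zero-exponent recurrence at the neutral boundary is available. The martingale identity for $\psi$ is the conceptual key: it is precisely what converts the absence of a clean $0$–$1$ law for the (non-tail) event "$f^{-n}_\omega(t)\to1$" into a workable statement.
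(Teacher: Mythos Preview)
Your argument is correct, and it takes a genuinely different route from the paper. The paper also passes to the inverse iteration, but then argues by contradiction using invariant measures: it considers the invariant graph $u(\omega)=\inf\{y: f^{-n}_\omega(y)\to 1\}$, observes that $u<1$ almost everywhere by Lemma~\ref{l:bonmil}, and that by ergodicity either $u\equiv 0$ or $u>0$ almost everywhere. In the latter case the measure $(\mathrm{id},u)_*\nu$ is $F$-invariant, depends only on the past, and so projects (via Proposition~\ref{p:fapp} and Lemma~\ref{l:corres}) to a stationary measure $m$ giving positive mass to $(0,1)$; this contradicts Theorem~\ref{t:stat01}, which in turn rests on the density-one statement of Theorem~\ref{t:delta0}.

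Your approach replaces this measure-theoretic machinery with a direct martingale argument built from the absorption probability $\psi$, and the only dynamical input you need from Section~\ref{s:on-off} is the recurrence statement of Theorem~\ref{t:beta}, which is strictly weaker (and easier) than Theorem~\ref{t:delta0}. So your proof is more self-contained and more economical in its prerequisites; it bypasses the classification of stationary measures and the appendix correspondence entirely. The paper's approach, on the other hand, slots naturally into its running theme of relating invariant graphs, stationary measures, and the two-sided/one-sided dictionary, so each argument illuminates a different aspect. The delicate points you flag are exactly the right ones, and your handling of them is sound: $\psi(s)\to 1$ as $s\to 1^-$ follows from $\psi(s)\ge \nu(\{r<s\})$, positivity of $\psi$ on $(0,1)$ follows from iterating the harmonic identity along $f_1^{-1}$, and the inverse system (after relabeling) is indeed in $\mathcal{S}$ with equal weights and zero exponent at $0$, so Theorem~\ref{t:beta} applies.
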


\begin{proof} 
We reformulate the theorem to the following equivalent statement:
for $\nu$-almost all $\omega \in \Sigma_2$, and for all $y \in (0,1)$, 
\begin{align}\label{e:unstisall}
\lim_{n\to \infty} f^{-n}_\omega (y) &= 1.
\end{align}
Equivalence of the statements \eqref{e:payback} and \eqref{e:unstisall} follows from the monotonicity of the interval diffeomorphisms:
 $f^{-n}_\omega (y) > x$ precisely if $f^{n}_{\sigma^{-n} \omega} (x) < y$ and thus for $\varepsilon_1,\varepsilon_2$ small positive numbers,
 $f^{-n}_\omega (\varepsilon_1) > 1-\varepsilon_2$ precisely if $f^{n}_{\sigma^{-n} \omega} (1-\varepsilon_2) < \varepsilon_1$.
 
To prove \eqref{e:unstisall}, consider
\begin{align}\label{e:uis}
u(\omega) &= \inf \{ y \in \mathbb{I} \; \mid \; \lim_{i\to\infty} f^{-i}_\omega (y) = 1 \}. 
\end{align}
As $L(1) >0$, 
by Lemma~\ref{l:bonmil} we know that $u$ exists and $u<1$, $\nu$-almost everywhere.
Since $u$ is invariant we get that either $u>0$, $\nu$-almost everywhere, or $u=0$, $\nu$-almost everywhere.
Assume that $u$ is not identically $0$. 
The measure $\mu = (\text{id},u)\nu$ on $\Sigma_2 \times \mathbb{I}$ with conditional measures $\delta_{u(\omega)}$ and marginal $\nu$ on $\Sigma_2$,
defines an invariant measure for $F$. 

Denote by $\Pi$ the natural projection $\Sigma_2 \times \mathbb{I} \to  \Sigma_2^+ \times \mathbb{I}$,
where $\Sigma_2 = \Sigma_2^- \times \Sigma_2^+$.
Expression \eqref{e:uis} gives that $u(\omega)$ depends on the past $\omega^-=  (\omega_i)_{-\infty}^{-1}$ only. 
Therefore, the measure $\mu$ is a product measure $\nu^+ \times \vartheta$ on
$\Sigma_2^+ \times \left(\Sigma_2^- \times \mathbb{I}\right)$.
The projection $\Pi \mu$ is therefore a product measure  $\nu^+ \times m$ on
$\Sigma_2^+ \times \mathbb{I}$. 
That is, $\mu$ corresponds to an invariant measure $\nu^+ \times m$ for $F^+$, see Proposition~\ref{p:fapp}.
Here $m$ is a stationary measure by Lemma~\ref{l:corres}.
Since $0<u<1$, $\nu$-almost everywhere,  $m$ assigns positive measure to $(0,1)$.

By Theorem~\ref{t:stat01}, the only stationary measures are convex combinations of delta measures at $0$ and $1$.
We have obtained a contradiction and proven \eqref{e:unstisall} and hence the theorem.
\end{proof}

\subsection{Central limit theorem} 

Under the assumptions of Theorem~\ref{t:pullback}, its  conclusion that $f^n_{\sigma^{-n} \omega} (x) \to 0$ for $\nu$-almost all $\omega$, 
implies that $f^n_{\sigma^{-n} \omega} (x)$ converges to $0$ in probability.
By $\sigma$-invariance of $\nu$,  $f^n_\omega (x)$ converges to $0$ in probability. Hence, for any $a \in (0,1)$,
\begin{align*} 
\lim_{n\to \infty} \nu^+ (\{ \omega \in \Sigma_2^+ \;  \mid  \;  f^n_\omega (x) \le a    \}) &= 1.
\end{align*}
We state a central limit theorem that gives convergence of the distribution of the points $f^n_\omega(x)$, after an appropriate scaling.
The proof is essentially contained in
\cite{lam62}, where a central limit theorem for Markov processes on the half-line is stated.

\begin{theorem}
Let $F^+ \in \mathcal{S}$ and assume $L(0) = 0$ and $L(1) > 0$. Let $x \in (0,1)$.
Then, for $a >0$, 
\begin{align*}
\lim_{n\to \infty} \nu^+ \left(\left\{ \omega \in \Sigma_2^+  \;  \mid  \; f^n_\omega (x) \ge e^{-a \sqrt{n}} \right\} \right)= \int_0^a \frac{2 e^{-\xi^2/2}}{\sqrt{2\pi}} \, d\xi.
\end{align*}
\end{theorem}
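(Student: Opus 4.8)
The plan is to recast the interval dynamics near the neutral boundary $0$ as a (spatially inhomogeneous, time-homogeneous) Markov chain on the half-line with asymptotically vanishing drift, and then to quote the central limit theorem of Lamperti \cite{lam62} (in the same circle of ideas as the recurrence criteria of \cite{lam63} discussed above).

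First I would pass to the coordinate $u_n = -\ln f^n_\omega(x)$, so that the event in the statement becomes $\{u_n \le a\sqrt{n}\}$ and $u_n \in (0,\infty)$, with $u_n$ close to $0$ precisely when $f^n_\omega(x)$ is near the repelling boundary $1$. Writing $f_i(x) = c_i x/(1 + t_i(x))$ with $c_i = f_i'(0)$ and $t_i(x) = O(x)$, as in the proof of Theorem~\ref{t:beta}, one has, as long as the orbit stays near $0$,
\[
u_{n+1} = u_n + d_{\omega_n} + \ln\bigl(1 + t_{\omega_n}(e^{-u_n})\bigr), \qquad d_i = -\ln c_i,
\]
where the increments $d_{\omega_n}$ are independent and identically distributed, with mean $(d_1 + d_2)/2 = 0$ (by $L(0) = 0$), variance $\sigma^2 = \bigl((\ln c_1)^2 + (\ln c_2)^2\bigr)/2 > 0$, and bounded moments of all orders, while the correction term is $O(e^{-u_n})$ by $C^2$-differentiability. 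Near the other end, i.e.\ for $u_n$ small, a linearisation of $f_1, f_2$ at $1$ as in the proof of Lemma~\ref{l:Tfinite} gives $u_{n+1} \approx a_{\omega_n} u_n$ with $a_i = f_i'(1)$, so that $\ln u_n$ has positive drift $L(1) > 0$ away from $0$. Hence $u_n$ is a Markov chain on $[0,\infty)$ with asymptotically zero mean increment and asymptotic increment variance $\sigma^2$ as $u_n \to \infty$, and with an effective reflection at $0$.

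Next I would verify that this is exactly the situation governed by Lamperti's limit theorem \cite{lam62}, whose conclusion in the mean-zero reflecting case is that $u_n/(\sigma\sqrt{n})$ converges in distribution to $|Z|$, with $Z$ standard normal; equivalently,
\[
\lim_{n\to\infty}\nu^+\bigl(\{\omega \in \Sigma_2^+ \;\mid\; f^n_\omega(x) \ge e^{-a\sigma\sqrt{n}}\}\bigr) = \int_0^a \frac{2 e^{-\xi^2/2}}{\sqrt{2\pi}}\, d\xi.
\]
That the boundary $0$ acts as a reflection rather than an absorber --- so that the limit is the half-normal law and not a degenerate one --- is precisely what the excursion analysis already carried out guarantees: Theorem~\ref{t:beta} shows that $f^n_\omega(x)$ returns to $[\beta,1]$ infinitely often, while Lemmas~\ref{l:Tinfty} and \ref{l:Tfinite} show that the sojourn times $\xi_k$ near $1$ have finite expectation whereas the sojourn times $\eta_k$ near $0$ have infinite expectation, so that on the diffusive scale almost all iterates are spent performing the mean-zero walk near $0$, the excursions towards $1$ being asymptotically negligible. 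This matches the stated formula exactly when $\sigma = 1$, which is the case for (perturbations of) the symmetric random walk, where $f_1'(0) = 1/e$; for general $F^+ \in \mathcal{S}$ with $L(0) = 0$ one obtains the same statement with $e^{-a\sqrt{n}}$ replaced by $e^{-a\sigma\sqrt{n}}$, equivalently after rescaling $\ln x$ by $1/\sigma$ as in the proof of Lemma~\ref{l:Tinfty}.

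The main obstacle is the rigorous verification of Lamperti's hypotheses in this dynamical setting, and above all making the heuristic ``effective reflection at $0$'' precise: one must control the excursion structure quantitatively, combining the stopping-time bounds of Lemmas~\ref{l:Tinfty} and \ref{l:Tfinite} with the uniform moment bounds on the increments supplied by $C^2$-differentiability of $f_1, f_2$, and check that the exponentially small corrections $\ln(1 + t_{\omega_n}(e^{-u_n}))$ do not disturb the $\sqrt{n}$ asymptotics. A secondary point needing care is the correct identification of the limiting variance $\sigma^2 = \bigl((\ln f_1'(0))^2 + (\ln f_2'(0))^2\bigr)/2$ and the attendant normalisation of the space variable.
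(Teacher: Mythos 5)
Your proposal follows essentially the same route as the paper: pass to the coordinate $y_n = -\ln x_n$ (up to an additive constant), recognize a Lamperti-type Markov chain on the half-line with asymptotically vanishing drift, and invoke the central limit theorem of \cite{lam62}. The paper realizes this by truncating to $z_n=\max\{0,y_n\}$, establishing the even-moment convergence $\int z_n^{2k}/n^k\,d\nu^+\to (2k)!/(2^k k!)$ via \cite[Lemmas~2.1 and~2.2]{lam62} -- whose ``null process'' hypothesis is supplied by Theorem~\ref{t:delta0} rather than by the excursion count you cite -- and then concludes with the method of moments from \cite[Theorem~2.1]{lam62}; your invocation of Lemmas~\ref{l:Tinfty} and~\ref{l:Tfinite} to justify the effective reflection at $0$ is a perfectly acceptable alternative packaging of the same facts, and at a comparable level of detail (both your sketch and the paper defer the hard moment estimates to Lamperti).

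You also flag a normalization that the paper glosses over, and you are right to do so. With $p_1=p_2=\tfrac12$ and $L(0)=0$ the bulk increments of $y_n$ are $\pm\ln\bigl(1/f_1'(0)\bigr)$, with variance $\sigma^2=(\ln f_1'(0))^2$, so Lamperti's theorem yields $y_n/\sqrt n\Rightarrow\sigma|Z|$ and hence the event should be $\{f^n_\omega(x)\ge e^{-a\sigma\sqrt n}\}$, equivalently $\int_0^{a/\sigma}$ on the right-hand side. The paper's Lemma~\ref{l:evenmoments} silently assumes $\sigma=1$; otherwise the limiting $2k$-th moment should carry a factor $\sigma^{2k}$. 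As literally written, both the theorem and your argument are therefore correct only when $f_1'(0)=1/e$ (e.g.\ perturbations of the symmetric random walk of \eqref{e:g1}--\eqref{e:g2}); for general $F^+\in\mathcal{S}$ one must, as you indicate, first rescale the logarithmic coordinate by $1/\sigma$, and it would be cleaner to build that normalization into the statement.
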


\begin{proof}
Take $x_{n+1} = f_{\omega_n} (x_n)$ with $x_0 = x$.
Write $y_n = - \ln (x_n) + \ln (x)$, so that $y_0 = 0$ and $y_n \in [0,\infty)$ if $x_n \in (0,x]$.
Write $z_n = \max\{ 0 , y_n\}$.

\begin{lemma}\label{l:evenmoments}
The moments of  $z_n^2/n$ satisfy
\[ \lim_{n\to \infty} \int_{\Sigma_2^+} z_n^{2k} / n^{k} \, d\nu^+= 
\frac{(2k)!}{2^k k!}.\]
\end{lemma}

\begin{proof}
It suffices to follow the proof of \cite[Lemma~2.1]{lam62} and \cite[Lemma~2.2]{lam62}. 
The proofs in \cite{lam62} use that the process is null, that is, for any compact interval $I \subset \mathbb{R}$, 
$\lim_{n\to \infty} \frac{1}{n} \sum_{i=0}^{n-1} \nu^+ (\{ \omega \;  \mid  \; z_i\in I \}) = 0$. 
 This holds by Theorem~\ref{t:delta0}.
\end{proof}

As in \cite[Theorem~2.1]{lam62}, 
Lemma~\ref{l:evenmoments} implies that $z_n^2/n$ has a limiting distribution
as $n\to \infty$ and
\[
\lim_{n\to \infty} \nu^+ \left( \left\{  \omega\in\Sigma_2^+ \;  \mid  \;  \frac{z_n^2}{n}     \le  a^2    \right\}\right)
= \int_0^{a}    \frac{2 e^{-\xi^2/2}}{\sqrt{2\pi}} \, d\xi.
\] 
We conclude that
\[
\lim_{n\to \infty} \nu^+ \left( \left\{  \omega\in\Sigma_2^+ \;  \mid  \;  \frac{y_n}{\sqrt{n}}     \le  a    \right\}\right)
= \int_0^{a}    \frac{2 e^{-\xi^2/2}}{\sqrt{2\pi}} \, d\xi.
\] 
Plugging in $y_n = -\ln (x_n) + \ln(x)$ gives the statement of the theorem.
\end{proof}


%
%

\section{Random walk with drift}\label{s:riddled}

The material in the previous sections treats all possible combinations of signs of $L(0)$ and $L(1)$ 
except the case where $L(0) \ge 0$ and $L(1) < 0$ (or vice versa). We put the remaining case in the following result.

\begin{theorem}\label{t:forward}
Consider ${F^+} \in \mathcal{S}$ and assume $L(0) \ge 0$  and $L(1) < 0$.
Let $x_0 \in (0,1)$.  
Then for $\nu^+$-almost every $\omega \in \Sigma_2^+$,
 \begin{align*}
  \lim_{n\to \infty} f^n_\omega (x_0) &= 1.
 \end{align*}
\end{theorem}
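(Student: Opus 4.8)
The plan is to reduce the statement to results already in hand by passing to the inverse skew product system, exactly as in the proof of Theorem~\ref{t:kan}. Let $G^+(\omega,x) = (\sigma\omega, f_{\omega_0}^{-1}(x))$ be the step skew product generated by $f_1^{-1}, f_2^{-1}$; it lies in $\mathcal{S}$ after relabelling, since $f_2^{-1}(x) < x$ and $f_1^{-1}(x) > x$ on $(0,1)$, with boundary Lyapunov exponents $-L(0) \le 0$ at $0$ and $-L(1) > 0$ at $1$. The conclusion we want, $\lim_{n\to\infty} f^n_\omega(x_0) = 1$ for $\nu^+$-almost all $\omega$, is a statement about forward orbits of $F^+$, which by monotonicity is equivalent to a statement about the orbits $f^{-n}_\omega$ of the inverse system accumulating on $1$ in an appropriate pullback/forward sense. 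So the first step is to fix this dictionary carefully and decide whether we want a pullback statement (handled as in Theorem~\ref{t:pullback}, via an invariant graph and Theorem~\ref{t:stat01}) or a direct forward argument.

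The cleanest route I would take: show that for $\nu^+$-almost all $\omega$ the forward orbit $f^n_\omega(x_0)$ converges to $1$ by proving (i) it cannot converge to $0$, and (ii) it cannot stay bounded away from $\{0,1\}$, and (iii) once it is trapped near $1$ it converges to $1$. For (iii), the assumption $L(1)<0$ makes $\Sigma_2^+\times\{1\}$ attracting: precisely the argument of Lemma~\ref{l:bonmil} applied at the endpoint $1$ (working with $1-x$ in place of $x$) gives a function $r(\omega) < 1$, positive-measure-many $\omega$ with $r(\omega)$ close to $1$, such that any point in $(r(\omega),1)$ is attracted to $1$ along the orbit of $\omega$; invariance of $r$ then promotes this to almost every $\omega$ once we know the orbit enters $(r(\omega),1)$ infinitely often — but actually the slicker formulation is: define $\mathcal B = \{(\omega,x) : f^n_\omega(x)\to 1\}$, show $\mathcal B$ has positive standard measure (Lemma~\ref{l:bonmil} at $1$), and that $\mathcal B$ is forward invariant and "open along fibers near $1$". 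For (i): if $L(0)>0$ the endpoint $0$ is repelling and no interior orbit converges to $0$ by a Lemma~\ref{l:bonmil}-type estimate at $0$; if $L(0)=0$, Theorem~\ref{t:beta} already tells us $f^n_\omega(x_0)\in[\beta,1]$ infinitely often for every small $\beta$, so the orbit does not converge to $0$. For (ii): an ergodic stationary measure $m$ with $m(\{0,1\})=0$ would, via Birkhoff on $\nu^+\times m$, force a positive fraction of iterates to stay in a compact subinterval of $(0,1)$; but Theorem~\ref{t:stat01} (which requires exactly $L(0)\le 0$, $L(1)<0$ — the present hypotheses) says the only ergodic stationary measures are $\delta_0$ and $\delta_1$, so no orbit (from a set of positive standard measure) can equidistribute in the interior. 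Combining: almost every forward orbit avoids $0$, avoids accumulating a positive frequency in the interior, hence must spend asymptotically full frequency near $\{0,1\}$, hence (by (i), avoiding $0$) near $1$, hence enters the basin of $\Sigma_2^+\times\{1\}$ from (iii) and converges.

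To make (ii) genuinely rule out \emph{all} interior behaviour and not merely interior stationary measures, I would instead argue directly with an invariant graph: by the proof of Theorem~\ref{t:kan}/Theorem~\ref{t:pullback} applied to $G^+$, set $u(\omega) = \inf\{y : f^{-n}_\omega(y)\to 1\}$; it is invariant, $u<1$ a.e.\ by the $L(1)<0$ analogue of Lemma~\ref{l:bonmil}, and it is either $\equiv 0$ or $>0$ a.e.; if $u>0$ a.e.\ then $\mu = (\mathrm{id},u)\nu$ projects to an $F^+$-invariant measure $\nu^+\times m$ with $m((0,1))>0$, contradicting Theorem~\ref{t:stat01}. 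Hence $u\equiv 0$ a.e., i.e.\ $f^{-n}_\omega(y)\to 1$ for every $y\in(0,1)$ and $\nu$-a.e.\ $\omega$; translating back through the monotonicity dictionary gives $f^n_{\sigma^{-n}\omega}(x_0)\to 0$... — wait, that is pullback, not forward. So the honest plan is: \emph{first} establish the forward statement directly via steps (i)--(iii) above, where (ii) is supplied by Theorem~\ref{t:stat01} in the sharp form "for all $x\in(0,1)$, the interior-frequency is $0$", which is exactly the content propagated through Theorem~\ref{t:delta0}'s proof scheme (finite expectation of excursions away from the neutral/attracting ends). The main obstacle is precisely step (ii): upgrading "no interior stationary measure" to "no interior-dwelling orbit of positive measure" without an invariant-graph shortcut; I expect to handle it by a stopping-time/excursion argument in the spirit of Lemma~\ref{l:Tfinite} showing the expected excursion length away from a neighbourhood of $1$ is finite, so that Borel–Cantelli / Wald forces the orbit into the basin of $1$.
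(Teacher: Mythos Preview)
Your invariant-graph approach is exactly the paper's proof, and you abandoned it because of a bookkeeping slip in the dictionary between $F$ and $G$. The paper's entire argument is one line: run the proof of Theorem~\ref{t:pullback} for the inverse system. Concretely, for $G$ (the natural extension of $G^+$, fibre maps $g_i = f_i^{-1}$) one has $L_G(0)=-L(0)\le 0$ and $L_G(1)=-L(1)>0$, so the proof of Theorem~\ref{t:pullback} goes through verbatim for $G$: Lemma~\ref{l:bonmil} at the endpoint $1$ gives $u<1$ almost surely, and Theorem~\ref{t:stat01} applied to $G^+$ (whose hypotheses are now satisfied) rules out $u>0$. The conclusion is the reformulated statement $g^{-n}_\omega(y)\to 1$ for $\nu$-almost every $\omega$ and every $y\in(0,1)$. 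Now unwind:
\[
g^{-n}_\omega \;=\; \bigl(g^n_{\sigma^{-n}\omega}\bigr)^{-1} \;=\; \bigl(g_{\omega_{-1}}\circ\cdots\circ g_{\omega_{-n}}\bigr)^{-1} \;=\; f_{\omega_{-n}}\circ\cdots\circ f_{\omega_{-1}},
\]
and since $(\omega_{-1},\omega_{-2},\ldots)$ is i.i.d.\ with the same law as $(\omega_0,\omega_1,\ldots)$, this reads $f^n_\eta(y)\to 1$ for $\nu^+$-almost every $\eta$, which is Theorem~\ref{t:forward}. Your translation to ``$f^n_{\sigma^{-n}\omega}(x_0)\to 0$'' came from confusing pullback for $G$ with pullback for $F$; the compositions are in the opposite order.

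Your fallback direct route has a real gap at step (ii). You invoke Theorem~\ref{t:stat01} for $F^+$, but its hypotheses are $L(0)\le 0$ and $L(1)>0$ (you misquoted the second as $L(1)<0$); even the $0\leftrightarrow 1$-symmetric version requires $L(0)>0$, so neither covers the case $L(0)=0$, $L(1)<0$ of the present theorem. The place where Theorem~\ref{t:stat01} legitimately enters is for $G^+$, but stationary measures for $G^+$ are not stationary measures for $F^+$, so knowing there are none in the interior for $G^+$ does not feed into an argument about interior accumulation of $F^+$-orbits. It does feed into the invariant-graph argument for $G$, which is why that route succeeds and the direct one stalls.
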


\begin{proof}
Take the proof of Theorem~\ref{t:pullback} applied to the inverse skew product system $F^{-1}$.
\end{proof}

Theorem~\ref{t:forward} establishes forward convergence of $f^n_\omega (x_0)$ to $1$ under the given assumptions.
Consider ${F^+} \in \mathcal{S}$ with $L(0) > 0$  and $L(1) < 0$.
Then there is also pullback convergence to $1$;
\begin{align*}
  \lim_{n\to \infty} f^n_{\sigma^{-n} \omega} (x_0) &= 1
 \end{align*}
for $\nu$-almost all $\omega \in \Sigma_2$.
It follows from the results in Section~\ref{s:on-off}, again by going to the inverse skew product system, 
that such a pullback convergence does not hold in case
$L(0)=0$, $L(1)<0$. See \cite{cra02} for considerations on forward versus pullback convergence in
 an example of random circle dynamics.

\appendix

\section{Invariant measures for step skew product systems}\label{s:invmeasures}


An iterated function system defines a Markov process and as such may admit stationary measures.
Their relation with invariant measures for the corresponding one-sided skew product system and its natural extension, 
the two-sided skew product system, is explored in this section. 
This is classical material, originating
from Furstenberg \cite{fur73}. A general account of the constructions is found in \cite{arn98}.
We provide a simplified discussion taylored to a setting of step skew product systems over shifts.
The reader may also consult the exposition in \cite[Chapter 5]{via14}.

Assume the context from Section~\ref{s:sps}.
So consider $\Omega = \{1,\ldots,N\}$ and the family of diffeomorphisms $\mathbb{F}=\{f_1,\ldots,f_N\}$ on $M$.
We pick $f_i$ with probability $p_i$ with $0<p_i<1$ and $\sum_{i=1}^{N} p_i=1$.

We endow $\Sigma_N$ with the Borel sigma-algebra, denoted by $\mathcal{F}$.
Likewise we take the Borel sigma-algebra $\mathcal{F}^+$ on $\Sigma_N^+$.  
Given the probabilities $p_i$, we take a Bernoulli measure $\nu$ on $\Sigma_{N}$ 
which is determined by its values on cylinders;
\[
\nu (C^{k_1,\ldots,k_n}_{\omega_1,\ldots,\omega_n}) = \prod_{i=1}^n  p_{\omega_i}.
\]
Write $\nu^+$ for the Bernoulli measure on $\Sigma_N^+$, defined analogously.
For different probability vectors $(p_1,\ldots,p_N)$, 
the corresponding Bernoulli measures are mutually singular.

Denote by $\mathcal B$ the Borel sigma-algebra on $M$.
For a measure $m$ on $M$ and any $\mathcal B$-measurable set $A$ we denote the push-forward measure of $m$ 
by $f_i m$ in which \[ f_i m_i(A)= m_i(f_i^{-1}(A)).\] 

\begin{definition}
A 
stationary measure $m$ on $M$ is a probability measure that equals its average push-forward under the iterated function 
system 
$\text{IFS}\,(\mathbb{F})$, i.e. it satisfies \[ m = \sum_{i=1}^{N} p_i f_i m.\]
\end{definition}

Write $\mathcal{P}_M$ for the space of probability measures on $M$, equipped with the weak star topology.
Write $\mathcal{T}: \mathcal{P}_M \to \mathcal{P}_M$ for the map
\[
\mathcal{T} m = \sum_{i=1}^{N} p_i f_i m.
\]
A stationary measure is a fixed point of $\mathcal{T}$.

\begin{lemma}\label{l:c0}
The map $\mathcal{T}$ is continuous.
It also depends continuously on the parameters $p_1, \ldots, p_N$ and $f_1,\ldots,f_{N}$.
\end{lemma}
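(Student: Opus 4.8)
The plan is to use the weak star characterization of convergence in $\mathcal{P}_M$: a sequence $m_k$ converges to $m$ precisely when $\int_M \varphi\, dm_k \to \int_M \varphi\, dm$ for every $\varphi \in C^0(M)$. Since $M$ is a compact metric space, $\mathcal{P}_M$ with the weak star topology is metrizable, so it is enough to argue with sequences throughout. The one computational ingredient is the change of variables formula for image measures: for $\varphi \in C^0(M)$ one has $\int_M \varphi\, d(f_i m) = \int_M \varphi\circ f_i\, dm$, and therefore
\[
\int_M \varphi\, d(\mathcal{T}m) = \sum_{i=1}^N p_i \int_M (\varphi\circ f_i)\, dm .
\]

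First I would prove continuity of $\mathcal{T}$ for fixed $p_i,f_i$. If $m_k \to m$ weakly, then for a fixed $\varphi\in C^0(M)$ and each $i$ the function $\varphi\circ f_i$ again lies in $C^0(M)$, because $f_i$ is continuous; hence $\int (\varphi\circ f_i)\, dm_k \to \int (\varphi\circ f_i)\, dm$, and the finite weighted sum over $i$ gives $\int \varphi\, d(\mathcal{T}m_k) \to \int \varphi\, d(\mathcal{T}m)$, i.e. $\mathcal{T}m_k \to \mathcal{T}m$. For the dependence on parameters, put the $C^0$ (uniform) topology on the diffeomorphisms and the Euclidean topology on the simplex of probability vectors, and take $p_i^{(k)}\to p_i$, $f_i^{(k)}\to f_i$ uniformly, and $m_k\to m$ weakly. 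For a fixed $\varphi\in C^0(M)$ I would bound each summand of $\int\varphi\, d(\sum_i p_i^{(k)} f_i^{(k)} m_k) - \int\varphi\, d(\sum_i p_i f_i m)$ by the triangle inequality as
\[
\bigl| p_i^{(k)}\textstyle\int(\varphi\circ f_i^{(k)})\,dm_k - p_i\int(\varphi\circ f_i)\,dm\bigr|
\le |p_i^{(k)}-p_i|\,\|\varphi\|_\infty + p_i\,\|\varphi\circ f_i^{(k)}-\varphi\circ f_i\|_\infty + p_i\,\bigl|\textstyle\int(\varphi\circ f_i)\,dm_k - \int(\varphi\circ f_i)\,dm\bigr|,
\]
where the middle term used that $m_k$ is a probability measure. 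The first term vanishes in the limit, the last one by weak convergence of $m_k$, and the middle one because $\varphi\circ f_i^{(k)}\to\varphi\circ f_i$ uniformly; summing over the finitely many $i$ finishes the proof.

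The only point requiring care is the middle term: one must pass from uniform convergence of the maps $f_i^{(k)}$ to uniform convergence of the compositions $\varphi\circ f_i^{(k)}$, which is where compactness of $M$ enters, through uniform continuity of $\varphi$ — given $\varepsilon>0$ pick $\delta$ with $d(y,y')<\delta \Rightarrow |\varphi(y)-\varphi(y')|<\varepsilon$, then use that $\sup_x d(f_i^{(k)}(x),f_i(x))<\delta$ for $k$ large. Everything else is a routine $\varepsilon$-splitting. I note that only $C^0$ convergence of the fiber maps is used, so the lemma applies directly to perturbed families such as those appearing in the proof of Lemma~\ref{l:hyp}.
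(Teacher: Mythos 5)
Your proof is correct, but it takes a genuinely different route from the paper's. The paper works with the Portmanteau characterization of weak-star convergence ($m_n\to m$ iff $\liminf m_n(A)\geq m(A)$ for all open $A$): continuity of $m\mapsto f_i m$ then follows because $f_i^{-1}(A)$ is open, and continuity in $f_i$ is obtained by an inner-regularity argument, choosing a compact $K\subset A$ with $m(f_i^{-1}(K))\geq m(f_i^{-1}(A))-\varepsilon$ and noting that $f_i^{-1}(K)\subset f_{i,n}^{-1}(A)$ for $n$ large. You instead work dually with test functions, using the change-of-variables identity $\int\varphi\,d(f_i m)=\int(\varphi\circ f_i)\,dm$ and a three-term $\varepsilon$-splitting. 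Your argument is more elementary (no regularity of measures invoked) and it gives slightly more: the paper's proof only establishes separate continuity in $m$ and in $(p,f)$, whereas your triangle inequality handles $m_k\to m$ and $(p^{(k)},f^{(k)})\to(p,f)$ simultaneously, i.e.\ joint continuity, which is actually what is needed a few lines later in the paper to conclude $m_\varepsilon\to m$ from $\mathcal{T}_\varepsilon\to\mathcal{T}$ and $\mathcal{T}_\varepsilon m_\varepsilon=m_\varepsilon$. Your observation that only $C^0$ convergence of the fiber maps is used is also accurate and is the reason the lemma applies to the noisy maps $f_{i,\zeta}$ in the proof of Lemma~\ref{l:hyp}.
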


\begin{proof}
Recall that a sequence of measures $m_n$ converges in the weak star topology to a measure $m$ precisely if for each open set $A$,
$\liminf_{n\to \infty} m_n (A) \geqslant m (A)$, see e.g. \cite[Theorem~III.1.1]{shi84}. 

If $m_n$ converges to $m$ it follows that for $A$ open,
\[
\liminf_{n\to \infty} f_i m_n (A)  = \liminf_{n\to\infty} m_n (f_i^{-1}(A)) \geqslant m (f_i^{-1}(A)) 
\]
 since $f_i^{-1} (A)$ is open.
That is, 
\[
\liminf_{n\to \infty} f_i  m_n (A)  \geqslant f_i m (A)
\]
and thus $f_i  m_n$ converges to $f_i  m$.
This argument also shows that $\mathcal{T}$ is continuous.

To prove continuous dependence on $f_1,\ldots,f_{N}$, consider a sequence of maps $f_{i,n}$ converging to $f_i$. 
By inner regularity, for an open set $O\subset M$ one has 
$m (O) = \sup_{C \subset O} m(C)$, where $C$ runs over compact subsets of $O$. 
So also, 
given $\varepsilon>0$, for $A \subset M$ open, there exists compact $K \subset A$ 
with $m (f_i^{-1} (K)) \geqslant m(f_i^{-1} (A)) -\varepsilon$.

Then, for $A$ open, given $\varepsilon>0$ there are $n_0 >0$ and $K \subset A$ so that 
$f_{i}^{-1} (K) \subset f_{i,n}^{-1} (A)$ for $n \geqslant n_0$ and $m (f_i^{-1} (K)) \geqslant m(f_i^{-1}(A)) - \varepsilon$.   
So,
\begin{align*}
\liminf_{n\to \infty} f_{i,n}  m (A) &= \liminf_{n\to\infty} m (f_{i,n}^{-1} (A))
\\
&\ge \lim_{n_0\to \infty} m  \left( \bigcap_{n \ge n_0} f_{i,n}^{-1} (A) \right) 
\\ 
&\ge m (f_i^{-1} (K)) 
\\
&\ge m (f_i^{-1} (A )) - \varepsilon.
\end{align*}
As this holds for any $\varepsilon$, we get
\[ 
\liminf_{n\to \infty} f_{i,n} m (A) \geqslant m (f_i^{-1} (A ))
\] 
and hence that $f_{i,n} m$ converges to $f_i m$.
This argument shows that $\mathcal{T}$  depends continuously on $f_1,\ldots,f_N$, continuous dependence on
parameters $p_1,\ldots,p_N$ is clear.
\end{proof}

The same type of argument shows that the map $\mathcal{T}_\varepsilon$, appearing in the proof of Theorem~\ref{t:sync},
is continuous and changes continuously with $\varepsilon$. 
The set of fixed points of $\mathcal{T}$ changes upper semi-continuously in the Hausdorff metric if
parameters $p_1,\ldots,p_N$ and $f_1,\ldots,f_N$ are varied.
So if $m$ is a unique fixed point for $\mathcal{T}$, $\mathcal{T}_\varepsilon \to \mathcal{T}$ as $\varepsilon \to 0$ and
$\mathcal{T}_\varepsilon m_\varepsilon = m_\varepsilon$, then $m_\varepsilon \to m$ as $\varepsilon \to 0$.

\begin{lemma}\label{l:corres}
A probability measure $m$ is a stationary measure if and only if
  $\mu^+ = \nu^+ \times m$
is an invariant measure of $F^+$ with marginal $\nu^+$ on $\Sigma_N^+$. 
\end{lemma}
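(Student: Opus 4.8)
The plan is to verify invariance by testing the product measure $\mu^+ = \nu^+\times m$ against a measure-determining family of functions, namely $g(\omega,x) = \mathbbm{1}_C(\omega)\,\psi(x)$, where $C\subset\Sigma_N^+$ is a cylinder depending on the coordinates $\omega_0,\dots,\omega_k$ only and $\psi\in C^0(M)$. Since $\Sigma_N^+\times M$ is compact and metrizable and indicator functions of cylinders are continuous, finite linear combinations of such $g$ form a subalgebra of $C^0(\Sigma_N^+\times M)$ that contains the constants and separates points, hence is dense by Stone--Weierstrass; consequently $\mu^+$ is $F^+$-invariant if and only if $\int g\circ F^+\,d\mu^+ = \int g\,d\mu^+$ for every $g$ of this form. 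Establishing this reduction is the only step with any technical content, and it is a routine Stone--Weierstrass / monotone-class argument.

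Next I would carry out the short computation. Since $F^+(\omega,x) = (\sigma\omega, f_{\omega_0}(x))$ we have $g\circ F^+(\omega,x) = \mathbbm{1}_C(\sigma\omega)\,\psi(f_{\omega_0}(x)) = \mathbbm{1}_{\sigma^{-1}C}(\omega)\,\psi(f_{\omega_0}(x))$. Under the Bernoulli measure $\nu^+$ the coordinate $\omega_0$ is independent of the coordinates $\omega_1,\dots,\omega_{k+1}$ that determine $\sigma^{-1}C$, and $\nu^+$ is $\sigma$-invariant, so $\nu^+(\sigma^{-1}C)=\nu^+(C)$. Applying Fubini's theorem and this independence yields
\[
\int_{\Sigma_N^+\times M} g\circ F^+ \, d\mu^+ = \nu^+(C)\sum_{i=1}^N p_i \int_M \psi(f_i(x))\,dm(x) = \nu^+(C)\int_M \psi \, d\Big(\sum_{i=1}^N p_i f_i m\Big),
\]
whereas directly $\int g\,d\mu^+ = \nu^+(C)\,\nu^+(\text{triv})\,\int_M \psi\,dm = \nu^+(C)\int_M \psi\,dm$.

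Finally I would conclude by comparing the two expressions. Equality for every cylinder $C$ and every $\psi\in C^0(M)$ is equivalent to $\int_M\psi\,d(\mathcal{T}m) = \int_M\psi\,dm$ for all $\psi\in C^0(M)$, which by the Riesz representation theorem means exactly $\mathcal{T}m = \sum_{i=1}^N p_i f_i m = m$, i.e. that $m$ is stationary. For the converse direction it suffices to note that if $\mu^+$ is invariant then the displayed identity with $C = \Sigma_N^+$ already forces $\mathcal{T}m = m$. The marginal of $\nu^+\times m$ on $\Sigma_N^+$ is $\nu^+$ by construction, so nothing further is needed there. I do not anticipate a serious obstacle: the entire lemma rests on the measure-determining reduction in the first paragraph together with the one-line index bookkeeping in the second (the shift moves $C$ to $\sigma^{-1}C$ and frees the coordinate $\omega_0$, over which the average $\sum_i p_i f_i$ is produced).
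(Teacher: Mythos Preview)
Your proof is correct and follows essentially the same route as the paper's: both test invariance on a generating family (the paper uses product sets $C\times B$ of a cylinder and a Borel set and the fact that these generate the product $\sigma$-algebra, you use the dual formulation with functions $\mathbbm{1}_C\cdot\psi$ and Stone--Weierstrass/Riesz), then perform the identical index bookkeeping on $(F^+)^{-1}$ to produce the average $\sum_i p_i f_i m$. The stray ``$\nu^+(\text{triv})$'' in your display is presumably a leftover placeholder for $1$.
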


\begin{proof}
Consider the following calculation
for product sets $C\times B \subset \Sigma_N^+ \times M$ of a cylinder $C = C^{0,\ldots,n-1}_{i_0,\ldots,i_{n-1}}$ and a Borel set $B$:
\begin{align*}
F^+ (\nu^+ \times m) (C \times B) &= \nu^+ \times m ((F^+)^{-1} (C\times B)) 
\\
&= \sum_{i=1}^N \nu^+ \times m  \left( C^{0,1,\ldots,n}_{i,i_0,\ldots,i_{n-1}} \times f_i^{-1}(B) \right)
\\
&= \sum_{i=1}^N p_i \nu^+ (C) m (f_i^{-1} (B))
\\
&=   \sum_{i=1}^N p_i \nu^+ (C) f_i m (B).
\end{align*}
If $m$ is a stationary measure, then the last expression equals $\nu^+ (C) m(B) = \nu^+ \times m (C \times B)$, so that
$F^+  (\nu^+ \times m) (C \times B) = \nu^+ \times m (C \times B)$.
Since the product sets generate  the $\sigma$-algebra, this 
proves $F^+$-invariance of $\nu^+ \times m$.
Similarly, if $\nu^+\times m$ is $F^+$-invariant, then 
the last expression equals $\nu^+ \times m (C \times B) = \nu^+ (C) m(B)$
and 
this proves $\sum_{i=1}^N p_i f_i  m (B) =  m (B)$. 
\end{proof}

Let $m$ be a stationary measure for $M$. 
We say that $m$ is ergodic if $\nu^+ \times m$ is ergodic for $F^+$. 
A point $(\omega,x)$ is said to be a generic point for an ergodic measure $\nu^+ \times m$,
if the orbit is distributed according to the measure.


Write $\pi: \Sigma_2 \to \Sigma_2^+$ for the natural projection $(\omega_n)_{-\infty}^\infty \mapsto (\omega_n)_0^\infty$.
The Borel sigma-algebra $\mathcal{F}^+$ on $\Sigma_N^+$  
yields a sigma-algebra $\mathcal{F}_0 = \pi^{-1}\mathcal{F}^+$ on $\Sigma_{N}$.
%
A  measure  $\mu$ on $\Sigma_{N} \times M$ with marginal $\nu$ has conditional measures $\mu_\omega$ 
on 
the fibers $\{\omega\}\times M$, such that
\begin{align}\label{e:disint}
\mu(A) &= \int_{\Sigma_{N}} \mu_\omega (A_\omega) \, d \nu(\omega)
\end{align}
for measurable sets $A$, where we have written 
\[
A_\omega = A \cap (\{\omega\} \times M).
\]
A measure $\mu^+$ on $\Sigma_{N}^+ \times M$ with marginal $\nu^+$ likewise has conditional measures $\mu^+_\omega$.
It 
is convenient to consider $\nu^+$ as a measure on $\Sigma_{N}$ with sigma-algebra $\mathcal{F}_0$ 
and
$\mu^+$ as a measure on $\Sigma_{N} \times M$ with sigma-algebra $\mathcal{F}_0 \otimes \mathcal{B}$.
When 
$\omega \in \Sigma_{N}$ we will write $\mu^+_\omega$ for the conditional measures $\mu^+_{ \pi \omega}$.
The spaces 
of measures are equipped with the weak star topology.

Invariant measures for $F^+$ with marginal $\nu^+$ correspond to 
invariant measures for $F$ with marginal $\nu$  
in a one-to-one relationship, as detailed in Proposition~\ref{p:fapp} below. 
This is a special case of \cite[Theorem~1.7.2]{arn98}.
The result implies that stationary measures correspond one-to-one to specific
invariant measures for $F$ with marginal $\nu$.

Write $\Sigma_N = \Sigma_N^- \times \Sigma_N^+$, where $\Sigma_N^-$ consists of the past parts $(\omega_i)_{-\infty}^{-1}$
of sequences $\omega$.
We have a natural projection 
\[\Pi : \Sigma_N^- \times \Sigma_N ^+ \times M \to \Sigma_N^+ \times M.\]

\begin{proposition}\label{p:fapp}
Let 
$\mu^+$ be an $F^+$-invariant probability measure with marginal $\nu^+$.
Then,
there exists an $F$-invariant probability measure $\mu$ with marginal $\nu$ and conditional measures
\begin{align}\label{e:muapp}
\mu_\omega &=  \lim_{n\to \infty}  f^n_{\sigma^{-n} \omega} \mu^+_{\sigma^{-n} \omega},
\end{align}
$\nu$-almost surely.

Let 
$\mu$ be an $F$-invariant probability measure with marginal $\nu$.
Then,
\begin{align}\label{e:mu+app}
\mu^+ &= 
\Pi \mu
\end{align}
is an $F^+$-invariant probability measure with marginal $\nu^+$.

The correspondence $\mu \leftrightarrow \mu^+$ given by \eqref{e:muapp}, \eqref{e:mu+app} is one-to-one.
Furthermore, through these relations, $F^+$-invariant product measures $\mu^+ = \nu^+ \times m$ correspond
one-to-one with $F$-invariant product measures $\mu = \nu^+ \times \vartheta$ on $\Sigma_N^+ \times (\Sigma_N^- \times M)$. 
\end{proposition}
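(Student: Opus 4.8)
\emph{Proof plan.} The plan is to build the assignment $\mu^+\mapsto\mu$ of \eqref{e:muapp} by a martingale argument, to observe that $\mu\mapsto\Pi\mu$ is induced by a topological semiconjugacy, and then to verify the two assignments are mutually inverse; the product statement will drop out of the explicit formula \eqref{e:muapp}. Throughout, $\mathcal{F}_{-n}$ denotes the sub-$\sigma$-algebra of the Borel $\sigma$-algebra on $\Sigma_N$ generated by the coordinates $\omega_i$ with $i\ge -n$, so that $\mathcal{F}_0=\pi^{-1}\mathcal{F}^+$ and $\mathcal{F}_{-n}\uparrow\mathcal{F}$.

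For the first claim, fix $\phi\in C(M)$ and set $Z_n(\omega)=\int_M\phi\,d\big(f^n_{\sigma^{-n}\omega}\mu^+_{\sigma^{-n}\omega}\big)$. Since $f^n_{\sigma^{-n}\omega}=f_{\omega_{-1}}\circ\cdots\circ f_{\omega_{-n}}$ and $\mu^+_{\sigma^{-n}\omega}=\mu^+_{\pi\sigma^{-n}\omega}$ reads only the coordinates $(\omega_{-n},\omega_{-n+1},\dots)$, the function $Z_n$ is $\mathcal{F}_{-n}$-measurable. Disintegrating the $F^+$-invariance of $\mu^+$ gives $\mu^+_\eta=\sum_{i=1}^N p_i f_i\mu^+_{i\eta}$ for $\nu^+$-a.e.\ $\eta$, where $i\eta$ prepends the symbol $i$; combining this with $f^{n+1}_{\sigma^{-n-1}\omega}=f^n_{\sigma^{-n}\omega}\circ f_{\omega_{-n-1}}$ and the fact that, under the Bernoulli measure $\nu$, the coordinate $\omega_{-n-1}$ is independent of $\mathcal{F}_{-n}$ with law $(p_i)$, one obtains $\mathbb{E}[Z_{n+1}\mid\mathcal{F}_{-n}]=Z_n$. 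Thus $(Z_n,\mathcal{F}_{-n})_{n\ge0}$ is a bounded martingale and converges $\nu$-a.s.; running this over a countable dense set of test functions and invoking the Riesz representation theorem yields, for $\nu$-a.e.\ $\omega$, a probability measure $\mu_\omega$ with $f^n_{\sigma^{-n}\omega}\mu^+_{\sigma^{-n}\omega}\to\mu_\omega$ in the weak star topology, i.e.\ \eqref{e:muapp}. Define $\mu$ by \eqref{e:disint} with these conditional measures and marginal $\nu$. From $f^{n+1}_{\sigma^{-n}\omega}=f_{\omega_0}\circ f^n_{\sigma^{-n}\omega}$ and continuity of push-forward under $f_{\omega_0}$ one gets $f_{\omega_0}\mu_\omega=\mu_{\sigma\omega}$ $\nu$-a.s., and this equivariance together with $\sigma$-invariance of $\nu$ gives $F$-invariance of $\mu$.

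For the converse direction, $\Pi\circ F=F^+\circ\Pi$ and $\pi_*\nu=\nu^+$ immediately make $\Pi\mu$ an $F^+$-invariant probability measure with marginal $\nu^+$ whenever $\mu$ is $F$-invariant with marginal $\nu$. To see the two maps are inverse, first take $\mu'$ $F$-invariant with marginal $\nu$: iterating equivariance gives $f^n_{\sigma^{-n}\omega}\mu'_{\sigma^{-n}\omega}=\mu'_\omega$, while $(\Pi\mu')_{\pi\sigma^{-n}\omega}$ is the $\nu^-$-average of $\mu'_{(\tau,\,\cdot\,)}$ over pasts $\tau$ with present-future fixed to $(\omega_{-n},\omega_{-n+1},\dots)$; since $f^n$ only reads coordinates $0,\dots,n-1$, one checks that $\omega\mapsto f^n_{\sigma^{-n}\omega}(\Pi\mu')_{\pi\sigma^{-n}\omega}$ is exactly $\mathbb{E}_\nu[\,\omega\mapsto\mu'_\omega\mid\mathcal{F}_{-n}\,]$, which converges to $\mu'_\omega$ by L\'evy's upward theorem; hence the measure built from $\Pi\mu'$ via \eqref{e:muapp} equals $\mu'$. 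Conversely, starting from $\mu^+$ and forming $\mu$ as above, the key point is that $\mathcal{F}_0$ sits at the bottom of the filtration $(\mathcal{F}_{-n})_{n\ge0}$, so the martingale identity gives $\mathbb{E}[Z_\infty\mid\mathcal{F}_0]=Z_0$, with $Z_0(\omega)=\int_M\phi\,d\mu^+_{\pi\omega}$; testing the definition of $\Pi\mu$ and of its conditional measures against $g(\omega^+)\psi(x)$ and using this identity shows $(\Pi\mu)_{\omega^+}=\mu^+_{\omega^+}$ for $\nu^+$-a.e.\ $\omega^+$, so $\Pi\mu=\mu^+$. This establishes the one-to-one correspondence.

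Finally, for the product statement: if $\mu^+=\nu^+\times m$ then $\mu^+_\eta\equiv m$, so \eqref{e:muapp} gives $\mu_\omega=\lim_{n\to\infty} f_{\omega_{-1}}\circ\cdots\circ f_{\omega_{-n}}\,m$, which depends only on $\omega^-=(\omega_i)_{-\infty}^{-1}$; calling this common value $\vartheta_{\omega^-}$, the measure $\mu$, read on $\Sigma_N^+\times(\Sigma_N^-\times M)$, is the product $\nu^+\times\vartheta$, where $\vartheta$ on $\Sigma_N^-\times M$ has marginal $\nu^-$ and conditional measures $\vartheta_{\omega^-}$. Conversely, if $\mu=\nu^+\times\vartheta$ on $\Sigma_N^+\times(\Sigma_N^-\times M)$ then $(\Pi\mu)_{\omega^+}=\int_{\Sigma_N^-}\vartheta_{\omega^-}\,d\nu^-(\omega^-)$ is independent of $\omega^+$, so $\Pi\mu=\nu^+\times m$ with $m$ the $M$-marginal of $\vartheta$, and $m$ is stationary by Lemma~\ref{l:corres}; by the bijection just proved, products correspond to products. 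I expect the main obstacle to be the bookkeeping in the martingale step: recognizing $(Z_n)$ as a martingale, and later identifying $f^n_{\sigma^{-n}\omega}(\Pi\mu')_{\pi\sigma^{-n}\omega}$ with a conditional expectation, both require careful tracking of which coordinates each factor of $f^n_{\sigma^{-n}\omega}\mu^+_{\sigma^{-n}\omega}$ depends on, and one must upgrade a.s.\ convergence for fixed test functions to a.s.\ weak star convergence of the measures.
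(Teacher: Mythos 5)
Your proof is correct and follows essentially the same route as the paper: set up the conditional measures as a martingale over the filtration $\mathcal{F}_n=\sigma^n\mathcal{F}_0$, invoke martingale convergence to build $\mu$ from $\mu^+$, verify $F$-invariance via equivariance of the fiber measures, and check the bijection by recognizing the pullback of $\Pi\mu$ as a conditional expectation of $\omega\mapsto\mu_\omega$ and applying L\'evy's upward theorem. The only cosmetic differences are that you work with continuous test functions plus the Riesz representation theorem where the paper applies the Vitali--Hahn--Saks theorem to indicator functions, you derive the martingale property directly from the disintegrated invariance identity $\mu^+_\eta=\sum_i p_i f_i\mu^+_{i\eta}$ where the paper routes through Lemma~\ref{l:l} and Corollary~\ref{c:l}, and you explicitly check both composition directions of the bijection (the $\Pi\mu=\mu^+$ direction via $\mathbb{E}[Z_\infty\mid\mathcal{F}_0]=Z_0$) whereas the paper spells out only one.
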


\begin{remark}\label{r:f}
Consider $\mu^+$ as a measure on $\Sigma_N \times M$ with sigma-algebra $\mathcal{F}_0 \otimes \mathcal{B}$.
Observe that $F^n (\mu^+)$ has conditional measures
$f^n_{\sigma^{-n} \omega} \mu^+_{\sigma^{-n} \omega}$ on $\{\omega\}\times M$.
Hence, \eqref{e:muapp} reads
\begin{align*} 
\mu &= \lim_{n\to\infty}  F^n (\mu^+).
\end{align*}
\end{remark}

\begin{remark}
From the characterization of ergodic probability measures as extremal points 
in
the set of invariant probability measures,
the 
one-to-one correspondence $\mu \leftrightarrow \mu^+$ implies that $\mu$ 
is 
ergodic if and only if $\mu^+$ is ergodic.
\end{remark}

\begin{proof}[Proof of Proposition~\ref{p:fapp}]
Note that 
$\mathcal{F}_s = \sigma^s \mathcal{F}_0$ are sigma-algebras on $\Sigma_N$ 
with
$\mathcal{F}_s  \uparrow\mathcal{F}$.
For a Borel set $B \subset M$, define
\[
 \upsilon_t (\omega) = f^t_{\sigma^{-t} \omega}  \mu^+_{\sigma^{-t}\omega} (B).
\]
Calculate, for $A_s = \sigma^{s} A_0 \in \mathcal{F}_s$ and $0 \le s \le t$,
\begin{align*}
\int_{A_s}  \upsilon_t (\omega)  \, d\nu (\omega) & \stackrel{(1)}{=}  
 \int_{A_s}  f^t_{\sigma^{-t} \omega} \mu^+_{\sigma^{-t}\omega} (B) \, d\nu(\omega)
 \\
 & \stackrel{(2)}{=}   \int_{A_0}  f^{t}_{\sigma^{s-t} \omega} \mu^+_{\sigma^{s-t}\omega} (B) \, d\nu(\omega)
 \\
 & \stackrel{(3)}{=}  \int_{A_0}  f^s_\omega \mu^+_{\omega} (B) \, d\nu(\omega)
 \\
 & \stackrel{(4)}{=}  \int_{A_s}  f^{s}_{\sigma^{-s} \omega} \mu^+_{\sigma^{-s}\omega} (B) \, d\nu(\omega)
 \\
 & \stackrel{(5)}{=}  \int_{A_s} \upsilon_s (\omega)  \, d\nu(\omega).
\end{align*}
Here (1) and (5) is the definition of $\upsilon_t$, (2) and (4) are by $\sigma$-invariance of $\nu$, and (3) is by $F^+$-invariance of $\mu^+$
(see Lemma~\ref{l:l} and Corollary~\ref{c:l} below for a derivation).

The above calculation shows that  $\upsilon_t $ is a martingale with respect to the filtration $\mathcal{F}_t$.
Hence the limit 
$\lim_{t\to \infty} \upsilon_t (\omega)$ exists.
By the Vitali-Hahn-Saks theorem, see \cite[Theorem~III.10]{do93}, the limit for varying Borel sets $B$ defines a measure, $\mu_\omega$. 
To obtain that the resulting measure $\mu$ is $F$-invariant, we refer to Remark~\ref{r:f}.
Since $F$ acts continuous on the space of probability measures, the limit $\lim_{n\to\infty}  F^n (\mu^+)$ is $F$-invariant.

It remains to show that $\mu$ and $\mu^+$ are in one-to-one correspondence.
We wish to show that, given $\mu$ and computing $\mu^+ = \Pi \mu$,
the formula \eqref{e:muapp} recovers $\mu$. 
Note, again with $A_t = \sigma^t A_0 \in \mathcal{F}_t$ for $0\le t$, 
%
\begin{align*}
\int_{A_t} & \upsilon_t(\omega) \, d \nu(\omega)  = \int_{A_t}  f^t_{\sigma^{-t}\omega} \mu^+_{\sigma^{-t} \omega} (B) \, d \nu(\omega)
\\ 
&= \int_{A_0} f^t_\omega \mu^+_\omega (B) \, d \nu(\omega) 
\\
&= \int_{A_0} \mu^+_\omega  ((f^t_\omega)^{-1} (B) ) \, d\nu(\omega)
\\ 
&=
\mu^+ \left(  \bigcup_{\omega \in A_0}  \{\omega\} \times (f^t_\omega)^{-1}(B)  \right)
\\
 &= \mu \left(  \bigcup_{\omega \in A_0}  \{\omega\} \times (f^t_\omega)^{-1}(B)  \right)
 \\ 
 &= \mu \left( (F^t)^{-1} ( A_t \times B ) \right)
 \\ 
 &= \int_{A_t} \mu_\omega (B)\, d \nu(\omega).
\end{align*}
As $\mathcal{F}_t \uparrow \mathcal{F}$, this shows that $\upsilon_t$ converges to $\mu$ as $t \to \infty$.

If $\mu$ is a product measure $\nu^+ \times \vartheta$ on $\Sigma^+_N \times (\Sigma_N^- \times M)$, 
then clearly  $\mu^+ = \Pi \mu$ is a product measure on $\Sigma_N^+ \times M$.
In the other direction, if $\mu^+ =  \nu^+ \times m$, then \eqref{e:muapp} reads
\begin{align*} 
\mu_\omega &=  \lim_{n\to \infty}  f^n_{\sigma^{-n} \omega} m,
\end{align*}
so that $\mu_\omega$ does not depend  on the future $\omega^+ = (\omega_n)_0^\infty$ of $\omega$.
For a product set $A = C^+ \times B \subset \Sigma_N^+ \times (\Sigma_N^- \times M)$, \eqref{e:disint} yields
\begin{align*}
\mu (A) &=  \int_{\Sigma_N^+}  \int_{\Sigma_N^-}   \mu_\omega (A_\omega) \, d\nu^- (\omega^-) d\nu^+ (\omega^+).
\end{align*}
Since $\mu_\omega$ depends on the past $\omega^-$ alone,
this can be written as $\mu (A) = \nu^+  \times \vartheta (C^+ \times B) = \nu^+ (C^+) \vartheta (B)$.
So $\mu$ is a product measure 
$\nu^+ \times \vartheta$ on $\Sigma^+_N \times (\Sigma_N^- \times M)$.
\end{proof}

The following lemma draws conclusions from $F^+$-invariance of $\mu^+$.

\begin{lemma}\label{l:l}
 For $A_0 \in \mathcal{F}^+$, 
 $B \in \mathcal{B}$, 
$0\le s \le t$,
 \[ 
 \int_{\sigma^{s-t} A_0} f^t_\omega \mu^+_\omega (B) \, d\nu^+ (\omega)  
 = \int_{\sigma^{s-t} A_0}  f^s_{\sigma^{t-s}\omega}  \mu^+_{\sigma^{t-s}\omega} (B) \, d \nu^+(\omega).
 \]
\end{lemma}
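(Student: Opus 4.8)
The plan is to reduce the identity to a single use of $F$-invariance of $\mu^+$, after recognising both sides as integrals of one and the same function against $\mu^+$. Throughout I write $k=t-s\ge 0$, so that $\sigma^{s-t}A_0=\sigma^{-k}A_0=\{\omega\in\Sigma_N^+:\sigma^k\omega\in A_0\}$, and I use the cocycle identity $f^t_\omega=f^s_{\sigma^k\omega}\circ f^k_\omega$, which is immediate from $f^n_\omega=f_{\omega_{n-1}}\circ\cdots\circ f_{\omega_0}$. Hence $f^t_\omega\mu^+_\omega(B)=\int_M \mathbbm{1}_B\big(f^s_{\sigma^k\omega}(f^k_\omega x)\big)\,d\mu^+_\omega(x)$ and $f^s_{\sigma^k\omega}\mu^+_{\sigma^k\omega}(B)=\int_M\mathbbm{1}_B\big(f^s_{\sigma^k\omega}(x)\big)\,d\mu^+_{\sigma^k\omega}(x)$, so the lemma is the $\Phi(\omega,x)=\mathbbm{1}_B(f^s_\omega x)$ instance of the ``change of base point'' identity below.

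The key step I would isolate is: for every bounded Borel $\Phi$ on $\Sigma_N^+\times M$,
\[
\int_{\sigma^{-k}A_0}\!\int_M \Phi\big(\sigma^k\omega,f^k_\omega x\big)\,d\mu^+_\omega(x)\,d\nu^+(\omega)
=\int_{\sigma^{-k}A_0}\!\int_M \Phi\big(\sigma^k\omega,x\big)\,d\mu^+_{\sigma^k\omega}(x)\,d\nu^+(\omega)
=\int_{A_0\times M}\Phi\,d\mu^+ .
\]
For the rightmost equality I would push forward by $\sigma^k$: the middle integrand is $g(\sigma^k\omega)$ with $g(\omega')=\int_M\Phi(\omega',x)\,d\mu^+_{\omega'}(x)$ and the region of integration is $\{\sigma^k\omega\in A_0\}$, so by $\sigma$-invariance of $\nu^+$ the middle term equals $\int_{A_0}g\,d\nu^+$, which is $\int_{A_0\times M}\Phi\,d\mu^+$ by the disintegration of $\mu^+$ over its marginal $\nu^+$. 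For the first equality I would observe that $\Phi(\sigma^k\omega,f^k_\omega x)=(\Phi\circ F^k)(\omega,x)$ and that $(\sigma^{-k}A_0)\times M=(F^k)^{-1}(A_0\times M)$, so the left term equals $\int\big(\mathbbm{1}_{A_0\times M}\cdot\Phi\big)\circ F^k\,d\mu^+$; since $\mu^+$ is $F^+$-invariant it is $F^k$-invariant, and this equals $\int\mathbbm{1}_{A_0\times M}\cdot\Phi\,d\mu^+$.

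Finally I would substitute $\Phi(\omega',x)=\mathbbm{1}_B(f^s_{\omega'}(x))$ (bounded and Borel because $(\omega,x)\mapsto f^s_\omega x$ is continuous): then $\Phi(\sigma^k\omega,f^k_\omega x)=\mathbbm{1}_B(f^t_\omega x)$ by the cocycle relation, so the left term is $\int_{\sigma^{s-t}A_0}f^t_\omega\mu^+_\omega(B)\,d\nu^+(\omega)$, while the middle term is $\int_{\sigma^{s-t}A_0}f^s_{\sigma^{t-s}\omega}\mu^+_{\sigma^{t-s}\omega}(B)\,d\nu^+(\omega)$, and equality of the two is exactly the lemma. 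The one thing requiring care — and what I expect to be the main (mild) obstacle — is the bookkeeping forced by non-invertibility of $\sigma$ on $\Sigma_N^+$: one must never write $\sigma^{-k}\omega$ for a point, but phrase everything through $\sigma^{-k}$ of \emph{sets}, $\sigma$-invariance of $\nu^+$, and the identity $(\sigma^{-k}A_0)\times M=(F^k)^{-1}(A_0\times M)$; once this is in place the statement is a one-line consequence of $F^k$-invariance together with disintegration. The degenerate case $s=t$ (so $k=0$) is trivial and may be treated separately or simply absorbed into the above.
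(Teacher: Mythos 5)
Your proof is correct and follows essentially the same route as the paper's: both arguments combine the cocycle identity $f^t_\omega=f^s_{\sigma^{t-s}\omega}\circ f^{t-s}_\omega$, disintegration of $\mu^+$ over its marginal $\nu^+$, $(F^+)^{t-s}$-invariance of $\mu^+$, and $\sigma$-invariance of $\nu^+$. The main presentational difference is that you isolate a general ``change of base point'' identity for bounded Borel $\Phi$ and then specialize to $\Phi(\omega,x)=\mathbbm{1}_B(f^s_\omega x)$, and you carefully phrase the shift through preimages of sets rather than the paper's (slightly abusive but harmless) $\sigma^{s-t}\omega$ notation; this makes the handling of non-invertibility of $\sigma$ on $\Sigma_N^+$ cleaner, but the underlying argument is the same.
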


\begin{proof}
Write $A_{s-t} = \sigma^{s-t} A_0$ and compute, using $F^+$-invariance of $\mu^+$,
\begin{align*}  
\int_{A_{s-t}} & f^t_\omega \mu^+_\omega (B) \, d\nu^+ (\omega) =  
           \int_{A_{s-t}} f^s_{\sigma^{t-s} \omega} f^{t-s}_{ \omega} \mu^+_\omega (B) \, d\nu^+(\omega) 
\\
&=          \int_{A_{s-t}} \mu^+_\omega  (   (f^{t-s}_{\omega})^{-1}  (f^s_{\sigma^{t-s} \omega})^{-1} (B)) \, d\nu^+(\omega)
\\
& =     \mu^+  \left( \bigcup_{\omega \in A_{s-t}}  \{\omega\} \times     (f^{t-s}_{\omega})^{-1}  (f^s_{\sigma^{t-s} \omega})^{-1} (B) \right)
\\
& = 
 \mu^+  \left( \bigcup_{\omega \in A_{0}}  \{\sigma^{s-t} \omega\} \times     (f^{t-s}_{\sigma^{s-t}\omega})^{-1}  (f^s_{\omega})^{-1} (B) \right)
\\
& = \mu^+ \left(  (F^+)^{s-t}    \bigcup_{\omega \in A_{0}}  \{\omega\} \times   (f^s_{\omega})^{-1} (B) \right) 
\\
&= \mu^+ \left(   \bigcup_{\omega \in A_{0}}  \{\omega\} \times   (f^s_{\omega})^{-1} (B) \right) 
\\
& = \int_{A_{0}} \mu^+_\omega  (  (f^s_{\omega})^{-1} (B)) \, d\nu^+(\omega)
\\
&= \int_{A_{0}} f^s_{\omega} \mu^+_\omega (B) \, d\nu^+(\omega).
\end{align*}
As $\int_{A_{0}} f^s_{\omega} \mu^+_\omega (B) \, d\nu^+(\omega) = 
\int_{A_{s-t}} f^s_{ \sigma^{t-s} \omega} \mu^+_{\sigma^{t-s} \omega} (B) \, d\nu^+(\omega)$ 
by $\sigma$-invariance of $\nu^+$, this concludes the argument.
\end{proof}

\begin{corollary}\label{c:l}
The lemma implies that for $A_0 \in \mathcal{F}_0$, $B \in \mathcal{B}$, 
for $0\le s\le t$, 
\[
\int_{A_0}  f^{t}_{\sigma^{s-t} \omega} \mu^+_{\sigma^{s-t}\omega} (B) \, d\nu(\omega)
=  \int_{A_0}  f^s_\omega \mu^+_{\omega} (B) \, d\nu(\omega).
\]
\end{corollary}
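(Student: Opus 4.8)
The plan is to derive the corollary from Lemma~\ref{l:l} by transporting the identity from $(\Sigma_N^+,\mathcal{F}^+,\nu^+)$ to $(\Sigma_N,\mathcal{F}_0,\nu)$ via the projection $\pi$, using $\sigma$-invariance of the Bernoulli measures to move the domain of integration into $\mathcal{F}_0$.

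First I would record three elementary facts. (i) Since $\mathcal{F}_0=\pi^{-1}\mathcal{F}^+$, every $A_0\in\mathcal{F}_0$ has the form $A_0=\pi^{-1}(\tilde A_0)$ with $\tilde A_0\in\mathcal{F}^+$, and $\sigma^{-m}A_0=\pi^{-1}(\sigma^{-m}\tilde A_0)$ for $m\ge 0$ because $\pi$ intertwines the two-sided and one-sided shifts; in particular, as $s\le t$, the set $\sigma^{s-t}A_0$ again lies in $\mathcal{F}_0$. (ii) The push-forward $\pi_*\nu$ equals $\nu^+$ on $\mathcal{F}^+$ (check this on cylinders), so $\int_{\pi^{-1}(C)}\phi\circ\pi\,d\nu=\int_C\phi\,d\nu^+$ for every measurable $\phi$ on $\Sigma_N^+$. (iii) For $\eta\in\Sigma_N$ the quantity $f^t_\eta\mu^+_\eta(B)=\mu^+_{\pi\eta}\big((f^t_\eta)^{-1}(B)\big)$ depends on $\eta$ only through $\eta_0,\dots,\eta_{t-1}$ and $\pi\eta$; hence it is $\mathcal{F}_0$-measurable and factors as $\phi^B_t\circ\pi$, where $\phi^B_t(\omega^+)=f^t_{\omega^+}\mu^+_{\omega^+}(B)$ on $\Sigma_N^+$.

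Next I would run the computation. Writing $\Phi(\eta)=f^t_\eta\mu^+_\eta(B)$, the integrand on the left of the corollary is $\Phi\circ\sigma^{s-t}$, so by $\sigma$-invariance of $\nu$ and invertibility of the two-sided shift, $\int_{A_0}\Phi(\sigma^{s-t}\omega)\,d\nu(\omega)=\int_{\sigma^{s-t}A_0}\Phi(\omega)\,d\nu(\omega)$. By (i) and (iii) the right-hand side equals $\int_{\sigma^{s-t}\tilde A_0}\phi^B_t\,d\nu^+$ after the identification in (ii). Now Lemma~\ref{l:l}, applied with $\tilde A_0$ in the role of its ``$A_0$'', gives $\int_{\sigma^{s-t}\tilde A_0}\phi^B_t\,d\nu^+=\int_{\sigma^{s-t}\tilde A_0}f^s_{\sigma^{t-s}\omega}\mu^+_{\sigma^{t-s}\omega}(B)\,d\nu^+(\omega)$, and $\sigma$-invariance of $\nu^+$ rewrites this as $\int_{\tilde A_0}f^s_\omega\mu^+_\omega(B)\,d\nu^+(\omega)$, exactly as in the last line of the proof of Lemma~\ref{l:l}. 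Since $\omega^+\mapsto f^s_{\omega^+}\mu^+_{\omega^+}(B)$ pulls back under $\pi$ to $\eta\mapsto f^s_\eta\mu^+_\eta(B)$, applying (ii) in reverse turns this into $\int_{A_0}f^s_\omega\mu^+_\omega(B)\,d\nu(\omega)$, which is the claimed identity.

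I expect the only genuine bookkeeping obstacle to be point (i): verifying that the shifted domain $\sigma^{s-t}A_0$ stays in $\mathcal{F}_0$ — which is precisely where the hypothesis $s\le t$ enters — and getting the direction in which $\pi$ intertwines the one- and two-sided shifts correct. Once these are pinned down, everything else is the routine change-of-variables accounting already exercised in the proof of Lemma~\ref{l:l}.
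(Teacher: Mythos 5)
Your proposal is correct, and it supplies exactly the routine verification the paper omits (the paper merely asserts ``the lemma implies'' and leaves the transport argument to the reader). The key observations you isolate are indeed the ones that make the derivation work: that $\omega \mapsto f^t_\omega\mu^+_\omega(B)$ factors through $\pi$ because the fiber maps are step and $\mu^+_\omega$ depends only on $\pi\omega$, so it is $\mathcal{F}_0$-measurable; that $\pi_*\nu = \nu^+$ lets you move between $(\Sigma_N,\mathcal{F}_0,\nu)$ and $(\Sigma_N^+,\mathcal{F}^+,\nu^+)$; and that the intertwining $\pi\circ\sigma = \sigma^+\circ\pi$ forces $\sigma^{-m}\pi^{-1}\tilde A_0 = \pi^{-1}(\sigma^+)^{-m}\tilde A_0$ for $m\ge 0$, which is precisely why $s\le t$ is needed for $\sigma^{s-t}A_0$ to stay in $\mathcal{F}_0$. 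Once you are back on $\Sigma_N^+$, Lemma~\ref{l:l} plus $\sigma^+$-invariance of $\nu^+$ (the final line of its proof) gives the equality, and the reverse transport via $\pi$ closes the argument. No gaps.
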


Note that for the natural extension, $F$-invariance of $\mu$ means
\[
f^{t}_{\sigma^{s-t} \omega} \mu_{\sigma^{s-t}\omega}
=  f^s_\omega \mu_{\omega} \]
for $0\le s \le t$ and for $\nu$-almost all $\omega\in \Sigma_N$.

\def\cprime{$'$}

\end{document}